\title{Transchromatic Generalized Character Maps}
\author{Nathaniel Stapleton}
\address{Massachusetts Institute of Technology,
Department of Mathematics
2-233
77 Massachusetts Avenue
Cambridge, MA 02139-4307}
\email{nstapleton@math.mit.edu}
\urladdr{}
\newtheorem{thm}{Theorem}[section]    
\newtheorem{prop}[subsection]{Proposition}
\newtheorem{example}[subsection]{Example}
\newtheorem*{mainthm}{Theorem}
\newtheorem{lemma}[subsection]{Lemma}          
\newtheorem{cor}[subsection]{Corollary}          
\theoremstyle{definition}
\newcommand{\powser}[1]{[\![#1]\!]}
\newcommand{\pdiv}{$p$-divisible }
\newcommand{\G}{\mathbb{G}}
\newcommand{\Sect}{\mathcal{O}}
\newcommand{\QZ}{\Q_p/\Z_p}  
\newcommand{\Zp}[1]{\Z/p^{#1}}
\newcommand{\al}{\alpha}
\newcommand{\Lk}{\Lambda_k}
\newcommand{\Lj}{\Lambda_j}
\newcommand{\lra}[1]{\overset{#1}{\longrightarrow}}
\newcommand{\lla}[1]{\overset{#1}{\longleftarrow}}
\newcommand{\Prod}[1]{\underset{#1}{\prod}}
\newcommand{\Coprod}[1]{\underset{#1}{\coprod}}
\newcommand{\Colim}[1]{\underset{#1}{\colim}}
\newcommand{\Lim}[1]{\underset{#1}{\lim}}
\newcommand{\E}{E_{n}}
\newcommand{\LE}{L_{K(t)}E_n}
\newcommand{\De}{\Delta}
\newcommand{\p}{\mathcal{P}}
\newcommand{\Lt}{L_t}
\newcommand{\mt}{I_t}
\newcommand{\Fix}{\text{Fix}_{n-t}}
\DeclareMathOperator{\Aut}{Aut}
\DeclareMathOperator{\im}{im}
\DeclareMathOperator{\Hom}{Hom}
\DeclareMathOperator{\colim}{colim}
\DeclareMathOperator{\Iso}{Iso}
\DeclareMathOperator{\Spec}{Spec}
\DeclareMathOperator{\Spf}{Spf}
\DeclareMathOperator{\Mor}{Mor}
\DeclareMathOperator{\Id}{Id}
\DeclareMathOperator{\Tot}{Tot}
\DeclareMathOperator{\Top}{Top}
\DeclareMathOperator{\Spectra}{Spectra}
\DeclareMathOperator{\Fixit}{Fix}
\def\co{\colon\thinspace}
\begin{document}

\begin{abstract}    

The generalized character map of Hopkins, Kuhn, and Ravenel \cite{hkr} can be interpreted as a map of cohomology theories beginning with a height n cohomology theory $E$ and landing in a height $0$ cohomology theory with a rational algebra of coefficients that is constructed out of $E$. We use the language of \pdiv groups to construct extensions of the generalized character map for Morava $E$-theory to every height between $0$ and $n$.

\end{abstract}

\maketitle


\section{Introduction}
In \cite{hkr}, Hopkins, Kuhn, and Ravenel develop a way to study cohomology rings of the form $E^*(EG\times_G X)$ in terms of a character map. The map developed was later used by Ando in \cite{Isogenies} to study power operations for Morava $\E$ and by Rezk in \cite{logarithmic} to construct the logarithmic cohomology operation for $\E$. Hopkins, Kuhn, and Ravenel's character map can be interpreted as a map of cohomology theories beginning with a height $n$ cohomology theory $E$ and landing in a height $0$ cohomology theory with a rational algebra of coefficients that they construct out of $E$. In this paper we use the language of \pdiv groups to extend their construction so that the character map can land in every height $0 \leq t < n$. 

We provide motivation and summarize the main result. Let $K$ be complex $K$-theory and let $R(G)$ be the complex representation ring of a finite group $G$. Consider a complex representation of $G$ as a $G$-vector bundle over a point. Then there is a natural map $R(G) \rightarrow K^0(BG)$. This takes a virtual representation to a virtual vector bundle over $BG$ by applying the Borel construction $EG \times_G -$. It is a special case of the Atiyah-Segal completion theorem \cite{atiyahcharacters} from the $60$'s that this map is an isomorphism after completing $R(G)$ with respect to the ideal of virtual bundles of dimension $0$. 

Let $L$ be a minimal characteristic zero field containing all roots of unity, and let $Cl(G;L)$ be the ring of class functions on $G$ taking values in $L$. A classical result in representation theory states that $L$ is the smallest field such that the character map
\[
R(G) \lra{} Cl(G,L)
\]
taking a virtual representation to the sum of its characters induces an isomorphism $L \otimes R(G) \lra{\cong} Cl(G;L)$ for every finite $G$.

Let $\E$ be the Morava $\E$-theory associated to the universal deformation of a height $n$ formal group law over a perfect field $k$ of characteristic $p$. Hopkins, Kuhn, and Ravenel build, for each Morava $E$-theory $\E$, an equivariant cohomology theory that mimics the properties of $Cl(G,L)$ and is the receptacle for a map from Borel equivariant $\E$. They construct a flat even periodic $\E^*$-algebra $L(\E)^*$ and define, for $X$ a finite $G$-CW complex, the finite $G$-CW complex 
\[
\Fixit_n(X) = \Coprod{\al \in \hom(\Z_{p}^{n},G)} X^{\im \al}. 
\]
Then they define a Borel equivariant cohomology theory
\[
L(\E)^*(\Fixit_n(X))^G = \big(L(\E^*)\otimes_{\E^*} \E^*(\Fixit_n(X))\big)^G
\]
and construct a map of Borel equivariant cohomology theories 
\[
\E^*(EG\times_G X) \lra{} L(\E)^*(\Fixit_n(X))^G.
\]
The codomain of this map is closely related to the class functions on $G$ taking values in $L(\E)^*$. In fact, when $X$ is a point, the codomain reduces to ``generalized class functions'' on 
\[
\hom(\Z_{p}^n,G) = \{(g_1,\ldots,g_n)|g_{i}^{p^h} = e \text{ for some } h, [g_i,g_j] = e\}
\]
considered as a $G$-set by pointwise conjugation. 
As in the case of the representation ring of a finite group, there is an isomorphism
\[
L(\E)^*\otimes_{\E^*}\E^*(EG\times_G X) \lra{\cong} L(\E)^*(\Fixit_n(X))^G.
\]

Let $\G_{\E}$ be the formal group associated to $\E$ and $\G_{\E}[p^k]$ the subscheme of $p^k$-torsion. The ring $L(\E)^0$ satisfies an important universal property: it is the initial ring extension of $p^{-1}\E^0$ such that for all $k$, $\G_{\E}[p^k]$, when pulled back over $L(\E)^0$, is canonically isomorphic to the constant group scheme $(\Zp{k})^n$. 
\[
\xymatrix{(\Zp{k})^n \ar[r] \ar[d] & \G_{\E}[p^k] \ar[d] \\
			\Spec(L(\E)^0) \ar[r] & \Spec(\E^0)}
\]

In this paper we will take advantage of the fact that this result can be rephrased in the language of \pdiv groups. Let $R$ be a ring. A \pdiv group over $R$ of height $n$ is an inductive system $(G_v, i_v)$ such that
\begin{enumerate}
\item $G_v$ is a finite free commutative group scheme over $R$ of order $p^{vn}$.
\item For each $v$, there is an exact sequence 
\[
0 \lra{} G_v \lra{i_v} G_{v+1} \lra{p^v} G_{v+1}
\]
where $p^v$ is multiplication by $p^v$ in $G_{v+1}$.
\end{enumerate}

Associated to every formal group $\G$ over a $p$-complete ring $R$ is a \pdiv group
\[
\G \rightsquigarrow \G[p] \lra{i_1} \G[p^2] \lra{i_2} \ldots.
\]
This is the ind-group scheme built out of the $p^k$-torsion for varying $k$. The only constant \pdiv groups are products of $\QZ$. The ring that Hopkins, Kuhn, and Ravenel construct is the initial extension of $p^{-1}\E^0$ such that the \pdiv group associated to $\G_{\E}$ pulls back to a constant \pdiv group.

For $\G_{\E}$, we have $\Sect_{\G_{\E}[p^k]} \cong \E^0(B\Z/p^k) = \pi_{0}F(B\Z/p^k,\E)$, the homotopy groups of the function spectrum. The pullback of $\G_{\E}[p^k]$ constructed by Hopkins, Kuhn, and Ravenel in \cite{hkr} factors through $\pi_{0}L_{K(0)}(F(B\Z/p^k,\E))$, the rationalization of the function spectrum. The spectrum of this Hopf algebra is the $p^k$-torsion of an \'etale \pdiv group. Rezk noted that there are higher analogues of this: Fix an integer $t$ such that $0\leq t<n$. Then $\Spec$ of $\pi_{0}(L_{K(t)}F(B\Z/p^k,\E))$ gives the $p^k$-torsion of a \pdiv group $\G$ over $L_{K(t)}\E^0$.

We prove that the standard connected-\'etale short exact sequence for \pdiv groups holds for $\G$ over $\LE$. There is a short exact sequence
\[
0 \lra{} \G_{0} \lra{} \G \lra{} \G_{et} \lra{} 0,
\]
where $\G_0$ is the formal group associated to $L_{K(t)}\E$ and $\G_{et}$ is an ind-\'etale \pdiv group. The height of $\G$ is the height of $\G_0$ plus the height of $\G_{et}$. 

These facts suggest that there may be results similar to those of \cite{hkr} over a ring for which the \pdiv group has a formal component, but for which the \'etale part has been made constant. This is the main theorem of the paper. 

First we define, for $X$ a finite $G$-CW complex, the finite $G$-CW complex
\[
\Fix(X) = \Coprod{\al \in \hom(\Z_{p}^{n-t},G)}X^{\im \al}.
\]

\begin{mainthm} \label{main}
For each $0 \leq t < n$ there exists an $\LE^0$-algebra $C_t$ such that the pullback 
\[
\xymatrix{\G_0 \oplus \QZ^{n-t} \ar[r] \ar[d] & \G \ar[r] \ar[d] & \G_{\E} \ar[d] \\
			\Spec(C_t) \ar[r] & \Spec(L_{K(t)}\E^0) \ar[r] & \Spec{\E^0} }
\]
is the sum of a height $t$ formal group by a constant height $n-t$ \pdiv group. The ring $C_t$ is non-zero and flat over $\E^0$ and can be used to make a height $t$ cohomology theory. Define
\[
C_t^*(EG\times_G \Fix(X)) = C_t \otimes_{\LE^0}\LE^*(EG \times_G \Fix(X)).
\]

For all finite $G$ we construct a map of equivariant theories on finite $G$-CW complexes
\[
\Phi_G \co \E^*(EG\times_G X) \lra{} C_{t}^*(EG \times_G \Fix(X))
\]
such that
\[
C_t\otimes_{\E^0}\Phi_G \co C_t\otimes_{\E^0}\E^*(EG\times_G X) \lra{\cong} C_{t}^*(EG \times_G \Fix(X))
\]
is an isomorphism of equivariant cohomology theories. The map of Hopkins, Kuhn, and Ravenel is recovered when $t=0$.
\end{mainthm}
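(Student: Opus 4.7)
The plan is to carry out the Hopkins--Kuhn--Ravenel program one chromatic layer at a time: instead of trivialising the entire $p$-divisible group $\G_{\E}$, we trivialise only the étale quotient $\G_{et}$ of the $p$-divisible group $\G$ sitting over $\LE^0$ from the connected-étale sequence recalled above. To construct $C_t$, consider the rings $D_k=\LE^0(B(\Zp{k})^{n-t})$, each of which encodes homomorphisms from $(\Zp{k})^{n-t}$ into the $p^k$-torsion of $\G$; inside $\Spec D_k$ sits the clopen subscheme parametrising those homomorphisms that factor through $\G_{et}[p^k]$ as an isomorphism, cut out by inverting suitable Euler classes that kill the formal contribution. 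Let $C_t^{(k)}$ be the coordinate ring of this subscheme and set $C_t=\colim_k C_t^{(k)}$. By construction, a ring map $C_t\to R$ classifies an isomorphism of ind-group schemes $(\QZ)^{n-t}\cong\G_{et}\otimes R$.

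Once $C_t$ trivialises $\G_{et}$, the pullback of $\G$ is an extension of $(\QZ)^{n-t}$ by $\G_0$. Standard arguments show this splits: the relevant $\mathrm{Ext}^1$ group is identified with the Tate module $\lim_k\G_0[p^k](C_t)$, which vanishes because $\G_0$ is formal (its torsion sections are nilpotent and the transition maps multiply by $p$), yielding $\G\otimes C_t\cong\G_0\oplus(\QZ)^{n-t}$. Flatness of $C_t$ over $\E^0$ is verified via the Landweber criterion applied to the pulled-back formal group $\G_0$, which has height exactly $t$ on $\Spec C_t$. Non-vanishing follows by exhibiting a point over, e.g., a suitable algebraically closed field extension. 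Landweber exactness then produces the height $t$ cohomology theory $C_t^*(-)=C_t\otimes_{\LE^0}\LE^*(-)$.

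To construct $\Phi_G$, for each $\al\in\Hom(\Z_p^{n-t},G)$ with image $A=\im\al$, combine the inclusion-induced restriction $\E^*(EG\times_G X)\to\E^*(EA\times_A X^A)$ with the identification $\E^*(EA\times_A X^A)\cong\E^*(BA)\otimes_{\E^*}\E^*(X^A)$ (valid because $A$ acts trivially on $X^A$), and then tensor with $C_t$. The factor $C_t\otimes_{\E^0}\E^0(BA)$ is the ring of functions on $\Hom(A^*,\G_0\oplus(\QZ)^{n-t})=\Hom(A^*,\G_0)\times\Hom(A^*,(\QZ)^{n-t})$, hence splits as a finite product of copies of $\Sect_{\Hom(A^*,\G_0)}$ indexed by $\Hom(A^*,(\QZ)^{n-t})\cong\Hom(\Z_p^{n-t},A)$; the homomorphism $\al$ picks out the distinguished component, whose coordinate ring I identify with a summand of $C_t\otimes_{\LE^0}\LE^0(BA)$. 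Projecting onto that summand and assembling $G$-equivariantly over $\al$ produces $\Phi_G$. The main technical obstacle is proving $C_t\otimes_{\E^0}\Phi_G$ is an isomorphism; the plan is a Mackey-style induction on equivariant CW skeleta that reduces, by naturality and the five-lemma, to test spaces $X=G/H$ with $H$ a finite abelian $p$-subgroup, where both sides match according to the decomposition above and the map is an isomorphism by direct inspection. The HKR case $t=0$ falls out because $\G_0$ is trivial over $L_{K(0)}\E^0=p^{-1}\E^0$, so $C_0$ coincides with $L(\E)^0$ and $\Fix=\Fixit_n$.
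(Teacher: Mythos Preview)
Your outline has the right architecture, but there are two genuine gaps and one misidentification worth flagging.

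\textbf{The ring $D_k$.} You write $D_k=\LE^0(B(\Zp{k})^{n-t})$, but that ring only carries the formal group $\G_0$: it represents homomorphisms into $\G_0[p^k]$, not into $\G[p^k]$. The correct object is $\Lt\otimes_{\E^0}\E^0(B(\Zp{k})^{n-t})$, which has rank $p^{kn(n-t)}$ rather than $p^{kt(n-t)}$ and represents maps into all of $\G[p^k]$. This matters because with the right $D_k$ the universal homomorphism $\Lk^*\to\G[p^k]$ over $C_t$ already \emph{is} a section of $\G[p^k]\to\G_{et}[p^k]$; the splitting $\G\cong\G_0\oplus(\QZ)^{n-t}$ is built into the construction via the five-lemma, and no separate $\mathrm{Ext}^1$ computation is needed. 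Your Tate-module argument is in any case suspect: an element of $\G_0[p^k](C_t)$ is a ring map $\Lt[x]/(g_k(x))\to C_t$, and the image of $x$ is just some element of $C_t$ satisfying a polynomial relation, not a nilpotent; there is no evident reason the inverse limit over $[p]$ should vanish for a ring like $C_t$ that is neither local nor complete.

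\textbf{The reduction to abelian isotropy.} This is the more serious omission. A cellular induction over a finite $G$-CW complex reduces the isomorphism statement to orbits $G/H$, but $H$ here is an \emph{arbitrary} subgroup of $G$, not an abelian one; there is no reason a general $G$-space should have a cell structure with only abelian isotropy. To bridge this gap the paper runs a complex-oriented descent argument: embed $G\hookrightarrow U(m)$, set $F=U(m)/T$ for $T$ a maximal torus, and observe that $F$ has abelian stabilizers while $EF$ is $G$-equivariantly contractible after restriction to abelian subgroups. The Bousfield--Kan spectral sequence for the simplicial resolution $X\times F^{\bullet+1}$ then collapses (it is the Amitsur complex for the faithfully flat map $\E^*(EG\times_G X)\to\E^*(EG\times_G(X\times F))$), reducing the problem to spaces with abelian stabilizers. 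Only \emph{then} does Mayer--Vietoris reduce to $G/H$ with $H$ abelian, and an induction identity $EG\times_G\Fix^G(G/H)\simeq EH\times_H\Fix^H(\ast)$ finishes the job. Your ``Mackey-style induction'' does not supply this step, and without it the argument stalls at non-abelian isotropy.

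The Landweber route to flatness is a legitimate alternative to the paper's direct argument (free module plus localization plus flatness of $\Lt$ over $\E^0$), though note Landweber gives flatness over $BP_*$ rather than over $\E^0$ directly; you would still need to say a word about why that suffices.
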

This map is intimately related to the algebraic geometry of the situation. In fact, when $X = *$ and $G = \Z/p^k$ this map recovers the global sections of the map on $p^k$-torsion $\G_0[p^k] \oplus (\Z/p^k)^{n-t} \lra{} \G_{\E}[p^k]$.

\paragraph{Acknowledgements} It is a pleasure to thank Charles Rezk for his help and guidance over several years. I would like to thank Matt Ando, Mark Behrens, Martin Frankland, David Gepner, Bert Guillou, David Lipsky, Haynes Miller, Jumpei Nogami, Rekha Santhanam, Olga Stroilova, and Barry Walker for helpful conversations and encouragement. I would also like to thank the referee for many helpful suggestions.

\section{Transchromatic Geometry}
Let $0\leq t<n$ and fix a prime $p$. In this section we study the \pdiv group obtained from $\G_{\E}$ by base change to $\pi_0 L_{K(t)}\E$. In the first section we prove that it is the middle term of a short exact sequence of \pdiv groups
\[
0 \lra{} \G_{0} \lra{} \G \longrightarrow \G_{et} \lra{} 0,
\]
where the first group is formal and the last is \'etale. In the second section we construct the ring extension of $\pi_0 L_{K(t)}\E$ over which the \pdiv group splits as a sum of a height $t$ formal group and a constant height $n-t$ \'etale \pdiv group.
\subsection{The Exact Sequence}
\label{exactsequence}
This paper will be concerned with the Morava $E$-theories $E_n$ and their localizations with respect to Morava $K(t)$-theory for $0\leq t<n$: $L_{K(t)}E_n$. $E_n$ is an even periodic height $n$ theory and $\LE$ is an even periodic height $t$ theory. Basic properties of these cohomology theories can be found in (\cite{Notes}, \cite{Hovey-vn}, \cite{hkr}, \cite{Nilpotence}) for instance. Let $k$ be a perfect field of characteristic $p$. The coefficients of these theories are
\begin{align*}
\E^0 &\cong W(k)\powser{u_1, \ldots , u_{n-1}} \\
\LE^0 &\cong W(k)\powser{u_1, \ldots , u_{n-1}}[u_t^{-1}]^{\wedge}_{(p, \ldots ,u_{t-1})}. 
\end{align*}
The second isomorphism follows from Theorem 1.5.4 in \cite{Hovey-vn}. Thus the ring $\LE^0$ is obtained from $\E^0$ by inverting the element $u_t$ and then completing with respect to the ideal $(p,u_1,\ldots,u_{t-1})$.

Let $E$ be one of the cohomology theories above. Classically, it is most common to study these cohomology theories in terms of the associated formal group $\G_E = \Spf(E^0(BS^1))$. However, in this paper we will be studying these cohomology theories in terms of their associated \pdiv group. First we fix a coordinate for the formal group $x\co \Sect_{\G_E} \cong E^0\powser{x}$. This provides us with a formal group law $\G_{E}(x,y) \in E^0\powser{x,y}$. The coordinate can be used to understand the associated \pdiv group. 

Let $\G_E[p^k] = \Spec(E^0(B\Zp{k})) = \hom_{E^0}(E^0(B\Zp{k}),-)$. As $B\Z/p^k$ is an H-space, $E^0(B\Z/p^k)$ is a Hopf algebra, and $\G_E[p^k]$ is a commutative group scheme. The following is a classical theorem and can be found in \cite{hkr},\cite{RW}.
\begin{thm} \label{groupcoh}
Given a generator $\beta^k \in (\Zp{k})^* = \hom(\Zp{k},S^1)$, there is an isomorphism $E^0(B\Zp{k}) \cong E^0\powser{x}/([p^k](x))$, where $[p^k](x)$ is the $p^k$-series for the formal group law associated to $E$. 
\end{thm}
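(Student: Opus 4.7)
The plan is to combine the complex orientation of $E$ with a Gysin (or equivalently Eilenberg--Moore) computation applied to $B\Z/p^k$ viewed as the homotopy fibre of the $p^k$-th power map on $BS^1$.

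First I would set up the geometry. The chosen generator $\beta^k$ sits in a short exact sequence of topological abelian groups $\Z/p^k \hookrightarrow S^1 \xrightarrow{p^k} S^1$, whose delooping gives a fibration $B\Z/p^k \to BS^1 \xrightarrow{p^k} BS^1$. By complex orientability $E^0(BS^1) \cong E^0\powser{x}$, and by the very definition of the formal group law associated to $E$ the $p^k$-th power map induces $x \mapsto [p^k](x)$ on $E^0$.

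Next I would compute skeletally. The finite lens space $L_N = S^{2N+1}/(\Z/p^k)$ is the unit sphere bundle of the line bundle $L^{\otimes p^k}$ over $\mathbb{CP}^N$ (where $L$ is the tautological bundle), and its Euler class is $e(L^{\otimes p^k}) = [p^k](x)$ because $L^{\otimes p^k}$ is classified by the $p^k$-th power map. The associated Gysin sequence then reads
\[
\cdots \to E^*(\mathbb{CP}^N) \xrightarrow{\,\cdot\,[p^k](x)} E^*(\mathbb{CP}^N) \to E^*(L_N) \to E^{*+1}(\mathbb{CP}^N) \to \cdots.
\]

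The main obstacle, and the heart of the argument, is the assertion that $[p^k](x)$ is a non-zero divisor in $E^0\powser{x}$. This holds by Weierstrass preparation: $E^0$ is a complete Noetherian local ring in each case at hand (for $E_n$ the maximal ideal is $(p,u_1,\dots,u_{n-1})$, for $L_{K(t)}E_n$ it is $(p,u_1,\dots,u_{t-1})$ after inverting $u_t$), and the $p^k$-series has distinguished Weierstrass form modulo $\m$, hence is a unit times a monic polynomial in $x$, which is regular. Granting this, the Gysin sequence splits into short exact sequences and the inverse limit over $N$ (with vanishing $\lim^1$, since the tower is Mittag-Leffler in each fixed cohomological degree) yields $E^0(B\Z/p^k) \cong E^0\powser{x}/([p^k](x))$, with the class $x$ on the right corresponding to the pullback of the coordinate along $B\beta^k \colon B\Z/p^k \to BS^1$. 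As an alternative route, one can apply the Eilenberg--Moore spectral sequence to the homotopy pullback square whose $E_2$-page is $\mathrm{Tor}_*^{E^0\powser{x}}(E^0, E^0\powser{x})$ computed via the two-term Koszul complex on the regular element $[p^k](x)$; it collapses at $E_2$ to the same answer.
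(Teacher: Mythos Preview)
The paper does not prove this statement at all: it is recorded as a classical theorem and attributed to \cite{hkr} and \cite{RW}, with the immediately following Proposition~\ref{wprep} (Weierstrass preparation) also cited from \cite{hkr}. Your Gysin-sequence argument is exactly the standard proof that appears in those references, so in substance there is nothing to compare.

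One small inaccuracy worth flagging: you describe $L_t = (L_{K(t)}E_n)^0$ as a complete Noetherian local ring with maximal ideal $(p,u_1,\dots,u_{t-1})$. The paper explicitly notes that $I_t = (p,u_1,\dots,u_{t-1})$ is \emph{not} maximal in $L_t$; the ring is $I_t$-adically complete but not local. This does not break your argument, because the version of Weierstrass preparation you need only requires completeness with respect to an ideal $I$ together with the fact that $[p^k](x)$ becomes a unit times a monic polynomial modulo $I$, which follows from $[p^k](x) \equiv [p^k]_t(x^{p^{kt}}) \equiv u_t^{(p^{kt}-1)/(p^t-1)} x^{p^{kt}} + \cdots \pmod{I_t}$ with $u_t$ a unit in $L_t$. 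The paper invokes exactly this form of Weierstrass preparation just after stating the theorem, so your use of it is consistent with the surrounding text; just adjust the phrasing so it does not assert locality.
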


The dual is needed because $\Zp{k} \lra{} S^1$ induces $E^0(BS^1) \lra{} E^0(B\Zp{k})$. This allows us to use the coordinate for the formal group in order to understand the codomain. The Weierstrass preparation theorem implies the following result.
\begin{prop} \label{wprep} (\cite{hkr}, Proposition 5.2) If the height of $E$ is $n$ then $E^{0}\powser{x}/([p^k](x))$ is a free $E^{0}$-module with basis $\{1,x, \ldots x^{p^{kn}-1}\}$.
\end{prop}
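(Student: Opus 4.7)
The plan is to apply the Weierstrass preparation theorem to $[p^k](x) \in E^0\powser{x}$; recall this asserts that for $R$ complete local with maximal ideal $\mathfrak{m}$, any $f(x) = \sum a_i x^i$ with $a_0, \ldots, a_{d-1} \in \mathfrak{m}$ and $a_d \in R^\times$ factors uniquely as $f = u \cdot g$ with $u \in R\powser{x}^\times$ and $g$ a monic polynomial of degree $d$. First I would fix the relevant maximal ideal: $(p, u_1, \ldots, u_{n-1})$ for $\E$, with residue field $k$; or $(p, u_1, \ldots, u_{t-1})$ for $\LE$, under which that ring is complete by the explicit formula displayed above. In either case the height $n$ hypothesis translates into $[p](x) \equiv v(x) x^{p^n} \pmod{\mathfrak{m}}$, where $v(x)$ is a power series with $v(0)$ a unit modulo $\mathfrak{m}$.

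A straightforward induction on $k$ then gives $[p^k](x) \equiv v_k(x) x^{p^{kn}} \pmod{\mathfrak{m}}$ with $v_k(0)$ a unit: substituting $[p^{k-1}](x)$ into $[p](y)$ and reducing modulo $\mathfrak{m}$ produces leading term $v(0) v_{k-1}(0)^{p^n}$, still a unit. This verifies the hypothesis of Weierstrass for $d = p^{kn}$, namely that every coefficient of $[p^k](x)$ in degree $< p^{kn}$ lies in $\mathfrak{m}$ while the coefficient in degree $p^{kn}$ is a unit.

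Applying Weierstrass yields a factorization $[p^k](x) = u(x) g(x)$ with $u(x) \in E^0\powser{x}^\times$ and $g(x)$ monic of degree $p^{kn}$. Hence $E^0\powser{x}/([p^k](x)) \cong E^0\powser{x}/(g(x))$, which by the usual division algorithm for power series against a monic polynomial is free over $E^0$ with basis $\{1, x, \ldots, x^{p^{kn}-1}\}$. The main point requiring care is confirming that $\LE^0$ really is a complete local ring to which Weierstrass applies, given that $u_t$ is inverted before completion; this is transparent once one reads the explicit formula as a completion at the prime $(p, u_1, \ldots, u_{t-1}) \subset \E^0[u_t^{-1}]$.
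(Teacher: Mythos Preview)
Your approach is exactly what the paper indicates: the paper does not give a proof of this proposition but simply cites \cite{hkr}, Proposition 5.2, and remarks beforehand that ``The Weierstrass preparation theorem implies the following result.'' Your write-up fills in those details correctly.

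One correction is needed, however. You state Weierstrass preparation for a complete \emph{local} ring and then assert that $\LE^0$ is complete local. It is not: for $0<t<n$ the residue ring $L_t/I_t \cong k\powser{u_t,\ldots,u_{n-1}}[u_t^{-1}]$ is not a field, so $I_t$ is not maximal and $L_t$ has many maximal ideals. The fix is painless: Weierstrass preparation holds more generally for any ring $R$ complete with respect to an ideal $I$, applied to a power series whose coefficients below degree $d$ lie in $I$ and whose degree-$d$ coefficient is a unit in $R$ (not merely a unit modulo $I$). In an $I$-adically complete ring an element is a unit iff its reduction mod $I$ is, so the coefficient of $x^{p^t}$ in $[p](x)$, which reduces to $u_t \in (L_t/I_t)^\times$, is a genuine unit in $L_t$. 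With this version of Weierstrass your argument goes through unchanged; just drop the word ``local.''
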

Thus we see that $\G_E[p^k]$ is a finite free group scheme of order $p^{kn}$. We now have the group schemes that we would like to use to form a \pdiv group. We must define the maps that make them into a \pdiv group.

For each $k$ fix a generator $\beta^k \in (\Zp{k})^*$. Define $i_k \co \Zp{k} \lra{} \Zp{k+1}$ to be the unique map such that $\beta^{k+1}\circ i_k = \beta^k$. This provides us with a fixed isomorphism 
\[
\Colim{k} \text{ } \Zp{k} \lra{\cong} S^1[p^\infty] \subset S^1.
\]
Then, with the coordinate,
\[
i_{k}^* = E^0(Bi_k)\co E\powser{x}/([p^{k+1}](x)) \lra{} E\powser{x}/([p^k](x))\co x \mapsto x.
\]
The spectrum of this map is the inclusion $i_k \co \G_E[p^k] \lra{} \G_E[p^{k+1}]$ and makes the inductive sequence 
\[
\G_E[p] \lra{i_1} \G_E[p^2] \lra{i_2} \ldots
\]
a \pdiv group.

Before continuing we establish some notation. Let $\Lt = \LE^0$ and $\mt = (p, u_1, \ldots u_{t-1})$. Note that $\mt$ is not necessarily a maximal ideal. For a scheme $X$ over $\Spec(R)$ and a ring map $R \lra{} S$, let 
\[
S \otimes X = \Spec(S)\times_{\Spec(R)} X.
\]
Given a \pdiv group $\G_E$ over $E^0$ and a ring map $E^0 \lra{} S$, let $S\otimes \G_E$ be the \pdiv group such that $(S\otimes \G_E)[p^k] = S\otimes (\G_E[p^k])$. 

Here we collect a few facts (\cite{Notes}) regarding the $p^k$-series for the formal group law $\G_{\E}(x,y)$ that we will need later. For $0 \leq h < n$,
\[
[p^k](x) = [p^k]_h(x^{p^{kh}}) = (u_h)^{(p^{hk}-1)/(p^h-1)}(x^{p^{kh}}) +\ldots  \bmod \text{ } (p,u_1, \ldots u_{h-1}).
\]
In particular,
\[
[p](x) = [p]_h(x^{p^h}) = u_hx^{p^h} + \ldots  \bmod \text{ } (p,u_1, \ldots u_{h-1}).
\]
There is a localization map $\E \lra{} \LE$ that induces $\E^0 \lra{} \Lt$, and $\G_{\LE}(x,y)$ is obtained from $\G_{\E}(x,y)$ by applying this map to the coefficients of the formal group law. The Weierstrass preparation theorem implies that
\[
[p^k](x) = f_k(x)w_k(x)
\]
in $\E^0\powser{x}$, where $f_k(x)$ is a monic degree $p^{kn}$ polynomial and $w_k(x)$ is a unit. In $\Lt\powser{x}$,
\[
[p^k](x) = g_k(x)v_k(x),
\]
where $g_k(x)$ is a monic degree $p^{kt}$ polynomial and $v_k(x)$ is a unit. From now on the symbol $[i](x)$ will stand for the $i$-series for the formal group law $\G_{\E}(x,y)$ as an element of $\E^0\powser{x}$ or an element of $L_t\powser{x}$. The above discussion implies that $[i](x)$ may have different properties depending on the ground ring. The ground ring should be clear from context. 

Now we focus our attention on the \pdiv group $\Lt\otimes \G_{\E}$.

\begin{prop}
The \pdiv group $\Lt\otimes\G_{\E}$ has height $n$ and the \pdiv group $\G_{\LE}$ has height $t$. The \pdiv group $\G_{\LE}$ is a sub-\pdiv group of $\Lt \otimes \G_{\E}$.
\end{prop}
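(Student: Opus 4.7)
The plan is to establish the three claims using Proposition \ref{wprep} and the localization map $\E \lra{} \LE$. The heights are essentially immediate from Proposition \ref{wprep}: applied to the height-$n$ theory $\E$ it makes $\E^0(B\Z/p^k) \cong \E^0\powser{x}/([p^k](x))$ free of rank $p^{kn}$ over $\E^0$, and hence free of rank $p^{kn}$ over $\Lt$ after base change, showing $\Lt\otimes\G_\E$ has height $n$. Applying the same proposition to the height-$t$ theory $\LE$ makes $\LE^0(B\Z/p^k)$ free of rank $p^{kt}$ over $\Lt$, so $\G_{\LE}$ has height $t$.

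For the sub-\pdiv group claim, I would use the localization map $\E \lra{} \LE$ to produce a natural $\E^0$-Hopf algebra map $\E^0(B\Z/p^k) \lra{} \LE^0(B\Z/p^k)$, sending the coordinate $x$ to $x$. Extending scalars along $\E^0 \lra{} \Lt$ and using $\LE^0 = \Lt$ gives an $\Lt$-Hopf algebra map
\[
\Lt \otimes_{\E^0} \E^0(B\Z/p^k) \lra{} \LE^0(B\Z/p^k),
\]
and on spectra this is a morphism of finite flat commutative group schemes $\G_{\LE}[p^k] \lra{} \Lt \otimes \G_\E[p^k]$. Naturality with respect to the inclusions $B\Z/p^k \lra{} B\Z/p^{k+1}$ assembles these into a morphism of \pdiv groups.

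The crucial and perhaps trickiest step is to verify that this Hopf algebra map is surjective, so that the morphism on spectra is a closed immersion exhibiting $\G_{\LE}$ as a sub-\pdiv group of $\Lt\otimes \G_\E$. Here I would invoke Proposition \ref{wprep} one more time, for $\LE$: the target $\LE^0(B\Z/p^k)$ has the explicit $\Lt$-basis $\{1, x, \ldots, x^{p^{kt}-1}\}$, and each of these basis elements is the image of the corresponding polynomial from the domain, which as an $\Lt$-algebra is generated by $x$ subject to the vanishing of the monic degree-$p^{kn}$ Weierstrass polynomial $f_k$. The main subtlety is that the domain and codomain have $\Lt$-rank $p^{kn}$ and $p^{kt}$ respectively, reflecting the two different Weierstrass factorizations of $[p^k](x)$ over $\E^0\powser{x}$ and over $\Lt\powser{x}$; Proposition \ref{wprep} supplies compatible polynomial descriptions on both sides that make the surjectivity comparison clean.
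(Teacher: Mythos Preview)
Your proposal is correct and follows essentially the same route as the paper: the heights come straight from Proposition~\ref{wprep}, and the sub-\pdiv group claim is established via the canonical $\Lt$-Hopf algebra map $\Lt\otimes_{\E^0}\E^0(B\Zp{k}) \to \LE^0(B\Zp{k})$ sending $x\mapsto x$. The only cosmetic difference is that the paper phrases surjectivity as the divisibility $f_k(x)=g_k(x)h_k(x)$ of the two Weierstrass polynomials (recording the quotient $h_k$ for later use in the connected--\'etale decomposition), whereas you observe directly that the $\Lt$-basis $\{1,x,\ldots,x^{p^{kt}-1}\}$ of the target lies in the image; these are equivalent.
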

\begin{proof}
The heights of the \pdiv groups follow immediately from Prop \ref{wprep}.
We show that $\G_{\LE}$ is a sub-\pdiv group of $\Lt \otimes \G_{\E}$ on the level of $p^k$-torsion. We have the following sequence of isomorphisms:
\begin{align*}
\Lt\otimes_{\E^0}\E^0(B\Z/p^k) &\cong \Lt\otimes_{\E^0}\E^0\powser{x}/([p^k](x)) \\
 &\cong \Lt\otimes_{\E^0} \E^0[x]/(f_k(x)) \\ 
&\cong \Lt[x]/(f_k(x))
\end{align*}
and
\begin{align*}
(\LE)^0(B\Z/p^k) &\cong \Lt\powser{x}/([p^k](x)) \\
 &\cong \Lt[x]/(g_k(x)), 
\end{align*}
where $f_k(x)$ and $g_k(x)$ are as above.
The canonical map 
\[
\E^0(B\Zp{k}) \lra{} (L_{K(t)}\E)^0(B\Zp{k})
\]
implies that $f_k(x) = g_k(x)h_k(x)$ as polynomials where $h_k(x) = v_k(x)/w_k(x)$. This implies that $\G_{\LE}[p^k]$ is a subgroup scheme of $\Lt \otimes \G_{\E}[p^k]$ and it is clear that the structure maps fit together to give a map of \pdiv groups.
\end{proof}

Throughout this paper we use very little formal algebraic geometry. However, it is important to note a certain fact that only holds over
\[
\Spf_{\mt}(\Lt) = \Colim{} \big(\Spec(\Lt/\mt) \lra{} \Spec(\Lt/\mt^2) \lra{} \Spec(\Lt/\mt^3) \lra{} \ldots \big).
\]
\begin{prop}
Over $\Spf_{\mt}(\Lt)$, $\G_{\LE}$ is the connected component of the identity of $\Lt \otimes \G_{\E}$.
\end{prop}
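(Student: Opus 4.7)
The plan is to verify the statement at each finite level $\Lt/\mt^i$ and then pass to the colimit that defines $\Spf_{\mt}(\Lt)$. The key input is the factorization $f_k(x) = g_k(x) h_k(x)$ in $\Lt[x]$ produced in the preceding proposition, where $f_k$ (resp.\ $g_k$) is the distinguished Weierstrass polynomial of $[p^k](x)$ over $\E^0$ (resp.\ $\Lt$).

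First I would analyze the two factors modulo $\mt$. Because $u_t$ is invertible in $\Lt$, the congruence $[p^k](x) \equiv u_t^{(p^{kt}-1)/(p^t-1)} x^{p^{kt}} + \ldots \pmod{\mt}$ forces $g_k(x) \equiv x^{p^{kt}} \pmod{\mt}$. Comparing the two Weierstrass identities $[p^k] = f_k w_k = g_k v_k$ in $\Lt\powser{x}$ gives $h_k = v_k/w_k$, so $h_k(0) = v_k(0)/w_k(0)$ is a unit in $\Lt$. Consequently, in $(\Lt/\mt)[x]/(x^{p^{kt}})$ the image of $h_k$ is a unit (its constant term is invertible and $x$ is nilpotent), which forces $(g_k(x), h_k(x)) = (1)$ in $(\Lt/\mt)[x]$. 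Since $\mt$ is finitely generated, every element of $\mt/\mt^i$ is nilpotent in $\Lt/\mt^i$, so any element of $(\Lt/\mt^i)[x]$ congruent to $1$ modulo $\mt$ is a unit. The coprimality of $g_k$ and $h_k$ therefore lifts to each $(\Lt/\mt^i)[x]$.

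By the Chinese Remainder Theorem, I then obtain a canonical splitting
\[
(\Lt/\mt^i) \otimes \G_{\E}[p^k] \;\cong\; \Spec\bigl((\Lt/\mt^i)[x]/(g_k)\bigr) \;\sqcup\; \Spec\bigl((\Lt/\mt^i)[x]/(h_k)\bigr),
\]
whose first factor is $(\Lt/\mt^i) \otimes \G_{\LE}[p^k]$. Since $h_k(0)$ is a unit, the identity section $x = 0$ does not factor through $\Spec((\Lt/\mt^i)[x]/(h_k))$ and therefore factors through the $g_k$-piece. Moreover, reducing $(\Lt/\mt^i)[x]/(g_k)$ modulo $\mt$ yields $(\Lt/\mt)[x]/(x^{p^{kt}})$, a nilpotent thickening of the identity section, so the $g_k$-factor is the connected component of the identity at level $p^k$ over $\Spec(\Lt/\mt^i)$. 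Taking colimits over $i$ (to reach $\Spf_\mt(\Lt)$) and over $k$ (to assemble the \pdiv group) delivers the proposition.

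The main obstacle I anticipate is the rigorous lifting of coprimality from mod $\mt$ to each mod $\mt^i$; once the finite generation of $\mt$ is used to guarantee this, the CRT decomposition and the identification of the identity section are essentially formal.
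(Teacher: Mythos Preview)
Your proposal is correct and follows essentially the same approach as the paper: both deduce the splitting from the coprimality of $g_k$ and $h_k$ modulo $\mt$ (where $g_k \equiv x^{p^{kt}}$ and $h_k$ has unit constant term), and both identify the $g_k$-factor as connected by reducing to $(\Lt/\mt)[x]/(x^{p^{kt}})$. The only minor differences are that the paper obtains the product decomposition directly over $\Lt$ via Nakayama's lemma rather than level-by-level over each $\Lt/\mt^i$, and it explicitly records that $\Lt/\mt = k\powser{u_t,\ldots,u_{n-1}}[u_t^{-1}]$ is a domain---a fact you use implicitly when asserting the $g_k$-piece is connected.
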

\begin{proof}
This is the same as saying that $\G_{\LE}$ is the formal component of the \pdiv group $\Lt \otimes \G_{\E}$ over $\Spf_{\mt}(\Lt)$. Once again we prove this by working with the $p^k$-torsion. The proof has two steps. First we give an explicit decomposition of $\Sect_{\Lt \otimes \G_{\E}[p^k]}$ as a product of two rings. We identify one of the factors as $\Sect_{\G_{\LE}[p^k]}$. Secondly, we show that $\Sect_{\G_{\LE}[p^k]}$ is connected.

The rings $\Lt[x]/(f_k(x))$, $\Lt[x]/(g_k(x))$, and $\Lt[x]/(h_k(x))$ are all finitely generated free $\Lt$-modules because the polynomials are monic. Thus the natural map
\[
\Lt[x]/(f_k(x)) \lra{} \Lt[x]/(g_k(x))\times \Lt[x]/(h_k(x))
\]
has the correct rank on both sides. We must show that it is surjective.

By Nakayama's lemma (\cite{Eisenbud}, Corollary 4.8), it suffices to prove this modulo $\mt$. Modulo $\mt$, $g_k(x) = x^{p^{kt}}$ and $h_k(x)$ has constant term a unit, a power of $u_t$, and smallest nonconstant term degree $x^{p^{kt}}$. Thus $(g_k(x))$ and $(h_k(x))$ are coprime and the map is an isomorphism. Note that 
\[
\Sect_{\G_{\LE}[p^k]} \cong \Lt[x]/(g_k(x)).
\]

To prove the connectedness, first note that $\Spf_{\mt}(\Lt)$ is connected. This is because the underlying space of $\Spf_{\mt}(\Lt)$ is the underlying space of
\[
\Spec(\Lt/\mt) = \Spec(k\powser{u_{t},\ldots, u_{n-1}}[u_{t}^{-1}]),
\]
which has no zero divisors.

For the same reason, it suffices to check that 
\[
\Spec(\Sect_{\G_{\LE}[p^k]}/\mt)
\]
is connected to prove that $\G_{\LE}[p^k]$ is connected over $\Spf_{\mt}(\Lt)$. However, we have the following sequence of isomorphisms
\begin{align*}
\Sect_{\G_{\LE}[p^k]}/\mt &\cong (\Lt/\mt)\powser{x}/([p^k](x)) \\
 &\cong (\Lt/\mt)\powser{x}/(x^{p^k}) \\
 &\cong (\Lt/\mt)[x]/(x^{p^k}).
\end{align*}
The last ring is connected because, modulo nilpotents, it is just $\Lt/\mt$.
\end{proof}

We conclude that the connected component of the identity of $\Lt\otimes\G_{E_n}[p^k]$ is isomorphic to $\G_{\LE}[p^k]$.

Let $\G = \Lt\otimes\G_{E}$ and $\G_0 = \G_{\LE}$.

Recall that we are working to prove that the \pdiv group $\G$ is the middle term of a short exact sequence
\[
0 \lra{} \G_{0} \lra{} \G \longrightarrow \G_{et} \lra{} 0,
\]
where the first \pdiv group is formal and the last is \'etale. This will come from an exact sequence at each level
\[
0 \lra{} \G_{0}[p^k] \lra{} \G[p^k] \lra{} \G_{et}[p^k] \lra{} 0.
\] 

Next we show that $\G_{et}[p^k]$ is in fact \'etale (as its nomenclature suggests). We begin by giving a description of the global sections of $\G_{et}[p^k]$.

The group scheme $\G_{et}[p^k]$ is the quotient of $\G[p^k]$ by $\G_{0}[p^k]$. It can be described as the coequalizer of
\[
\xymatrix{\G_0[p^k] \times \G[p^k] \ar@<1ex>[r]^(.6){\mu} \ar[r]_(.6){\pi} & \G[p^k]},
\]
where the two maps are the multiplication, $\mu$, and the projection, $\pi$.

Using the methods of Demazure-Gabriel in \cite{groupes} as explained in Section 5 of \cite{subgroups}, we can describe the global sections of $\G_{et}[p^k]$, or the equalizer
\[
\Sect_{\G_{et}[p^k]} \lra{} \Sect_{\G[p^k]} \rightrightarrows \Sect_{\G[p^k]}\otimes\Sect_{\G_0[p^k]}
\]
by using a norm construction.

Let $R \lra{f} S$ be a map of rings, where $S$ is a finitely generated free $R$-module. Given $u \in S$, multiplication by $u$ is an $R$-linear endomorphism of $S$. Thus its determinant is an element of $R$. Let $N_f \co S \lra{} R$ be the multiplicative norm map
\[
N_{f}(u) = \det(-\times u), 
\]
the map that sends $u \in S$ to the determinant of multiplication by $u$. $N_{f}$ is not additive. The basic properties of the norm are described in \cite{subgroups}.

It is shown in \cite{subgroups} that for $x \in \Sect_{\G[p^k]}$, $N_{\pi}\mu^*(x)$, which is naturally an element of $\Sect_{\G[p^k]}$, actually lies in $\Sect_{\G_{et}[p^k]}$. Let $y = N_{\pi}\mu^*(x)$. It is also shown that if $i\co\G_0[p^k] \lra{} \G[p^k]$ is the inclusion, then $i^*y = 0$.

Using these facts we arrive at the following proposition.

\begin{prop} \label{jk}
There is an isomorphism $\Sect_{\G_{et}[p^k]} \cong \Lt[y]/(j_k(y))$ where $j_k(y)$ is a monic polynomial of degree $p^{k(n-t)}$.
\end{prop}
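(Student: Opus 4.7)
The plan is to identify $\Sect_{\G_{et}[p^k]}$ with $\Lt[y]/(j_k(y))$ in three stages: first establishing that $\Sect_{\G_{et}[p^k]}$ is $\Lt$-free of rank $p^{k(n-t)}$, next producing $j_k$ via the Cayley--Hamilton theorem, and finally promoting the resulting map to an isomorphism by a Nakayama argument modulo $\mt$.

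To fix the rank, I would consider the characteristic polynomial $\chi(T) \in \Sect_{\G[p^k]}[T]$ of multiplication by $\mu^*(x) = \G_{\E}(z,x)$ on the rank-$p^{kt}$ free $\Sect_{\G[p^k]}$-module $\Sect_{\G_0[p^k]} \otimes_{\Lt} \Sect_{\G[p^k]}$, viewed via $\pi^*$. Since translation by $\G_0[p^k]$ permutes its own points, the coefficients of $\chi$ are $\G_0[p^k]$-invariant and hence lie in $\Sect_{\G_{et}[p^k]}$; and since the factor corresponding to $0 \in \G_0[p^k]$ is $T - \G_{\E}(0,x) = T - x$, evaluation at $T = x$ gives $\chi(x) = 0$. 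Thus $x$ satisfies a monic polynomial of degree $p^{kt}$ over $\Sect_{\G_{et}[p^k]}$, and $\Sect_{\G[p^k]}$ is generated by $\{1, x, \ldots, x^{p^{kt}-1}\}$ over $\Sect_{\G_{et}[p^k]}$. Because $\pi\colon \G[p^k] \to \G_{et}[p^k]$ is a $\G_0[p^k]$-torsor, $\Sect_{\G[p^k]}$ is in fact free of rank exactly $p^{kt}$ over $\Sect_{\G_{et}[p^k]}$, forcing the latter to be $\Lt$-free of rank $p^{k(n-t)}$.

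Next, define $j_k(T) \in \Lt[T]$ as the characteristic polynomial of multiplication by $y$ on $\Sect_{\G_{et}[p^k]}$; this is monic of degree $p^{k(n-t)}$, and $j_k(y) = 0$ by Cayley--Hamilton. Let $\phi\colon \Lt[T]/(j_k(T)) \to \Sect_{\G_{et}[p^k]}$ be the induced $\Lt$-algebra map sending $T$ to $y$. Both source and target are free $\Lt$-modules of rank $p^{k(n-t)}$, so it suffices to prove $\phi$ is surjective.

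To verify surjectivity, I would reduce modulo $\mt$ and invoke Nakayama's lemma (applicable since $\Lt$ is $\mt$-adically complete). A direct determinant computation shows $y \equiv x^{p^{kt}} \pmod{\mt}$: in the basis $\{1, z, \ldots, z^{p^{kt}-1}\}$ of $(\Lt/\mt)[z]/(z^{p^{kt}})$, multiplication by $\G_{\E}(z,x)$ is lower triangular with diagonal $x$. The identity $[p^k](x) \equiv [p^k]_t(x^{p^{kt}}) \pmod{\mt}$ from the preliminaries implies that $h_k \bmod \mt$ is itself a polynomial in $x^{p^{kt}}$, and the product decomposition $\Sect_{\G[p^k]}/\mt \cong (\Lt/\mt)[x]/(g_k) \times (\Lt/\mt)[x]/(h_k)$ from the preceding proposition then lets one check that the $\Lt/\mt$-span of $\{1, y, \ldots, y^{p^{k(n-t)}-1}\}$ fills out the ring of $\G_0[p^k]$-invariants, i.e., $\Sect_{\G_{et}[p^k]}/\mt$. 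This final identification is the main obstacle; it reduces to an explicit calculation using the factorization $f_k = g_k h_k$ and the special form of $h_k$ modulo $\mt$.
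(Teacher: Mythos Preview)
Your approach is essentially the same as the paper's: both establish $y \equiv x^{p^{kt}} \pmod{\mt}$ by the triangular-matrix norm computation, then invoke Nakayama over the $\mt$-adically complete ring $\Lt$ to conclude. The differences are at the edges. Your first stage, deriving the rank of $\Sect_{\G_{et}[p^k]}$ from the characteristic polynomial $\chi$ and the torsor structure of $\G[p^k] \to \G_{et}[p^k]$, is more self-contained than the paper, which simply cites the Demazure--Gabriel/Strickland theory of quotients of finite free commutative group schemes for the freeness and rank. (Your phrasing ``the factor corresponding to $0 \in \G_0[p^k]$'' is heuristic, since $\G_0[p^k]$ is connected; the clean way to get $\chi(x)=0$ is to apply the counit $z \mapsto 0$ to the Cayley--Hamilton identity $\chi(\mu^*x)=0$.) Your final step, however, is more roundabout than necessary: once you know $y \equiv x^{p^{kt}} \pmod{\mt}$, the elements $1, y, \ldots, y^{p^{k(n-t)}-1}$ reduce to distinct powers of $x$ of degree strictly less than $p^{kn} = \deg f_k$, so they are already linearly independent in $(\Lt/\mt)[x]/(f_k)$; combined with the rank count this yields a basis directly, and the product decomposition and explicit identification of invariants are not needed. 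That is how the paper finishes.
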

\begin{proof}
Recall that we have given more explicit descriptions of $\Sect_{\G[p^k]}$ and $\Sect_{\G_0[p^k]}$:
\begin{align*}
\Sect_{\G[p^k]} &\cong \Lt[x]/(f_k(x)) \\
\Sect_{\G_0[p^k]} &\cong \Lt[x]/(g_k(x)).
\end{align*}
To begin we see that $i^*(y) = 0$ implies that $g_k(x) | y$ in $\Lt[x]/(f_k(x))$.

It turns out to be easy to understand $y$ $\bmod$ $I_t$. This is because the norm commutes with quotients. When working $\bmod$ $\mt$, $g_k(x) = x^{p^{kt}}$. So $\Sect_{\G[p^k]\times \G_0[p^k]}/\mt \cong (\Lt/\mt)[x,z]/(f_k(x),z^{p^{kt}})$ and $\mu^* x = x$ $\bmod$ $z$ because $\mu^* x$ is the image of the formal group law in $(\Lt/\mt)[x,z]/(f_k(x),z^{p^{kt}})$. So the matrix for multiplication by $\mu^* x$ in the basis ${1,z,\ldots,z^{p^{kt}-1}}$ is upper triangular with diagonal entries $x$. Thus $y = N_{\pi}\mu^*x = x^{p^{kt}} \bmod$ $\mt$.

The $L_t$-algebra $\Sect_{\G_{et}[p^k]}$ is a subalgebra of $\Sect_{\G[p^k]}$ that is free as an $\Lt$-module. As $y \in \Sect_{\G_{et}[p^k]}$ so is $y^l = N_{\pi}\mu^{*}{x^l}$. Now as each of $\{1,y, \ldots, y^{p^{(n-t)k}-1}\}$ are linearly independent $\bmod$ $\mt$, they are linearly independent in $\Lt[x]/(f_k(x))$. Also Nakayama's lemma implies that they are part of a basis for $\Lt[x]/(f_k(x))$, because the set is part of a basis $\bmod$ $\mt$. A quick count of this set shows that it does span $\Sect_{\G_{et}[p^k]}$. Thus $\Sect_{\G_{et}[p^k]} \cong \Lt[y]/(j_k(y))$ where $j_k(y)$ is a monic polynomial.
\end{proof}

\begin{cor}
There is an isomorphism 
\[
\Sect_{\G_{et}[p^k]}/\mt \cong (\Lt/\mt)\otimes_{\E^0} \E^0\powser{y}/([p^k]_t(y)).
\]
\end{cor}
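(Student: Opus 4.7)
The plan is to exhibit both sides as free $\Lt/\mt$-algebras of rank $p^{k(n-t)}$ generated by the element $y$, and then to construct a surjective ring map between them. Proposition \ref{jk} directly gives that $\Sect_{\G_{et}[p^k]}/\mt$ is the quotient of $(\Lt/\mt)[y]$ by the reduction of the monic polynomial $j_k(y)$ modulo $\mt$, hence free of rank $p^{k(n-t)}$ over $\Lt/\mt$. For the other side, one applies Weierstrass preparation in the complete local ring $(\E^0/\mt)\powser{y}$ to the power series $[p^k]_t(y)$: the constant term vanishes, the intermediate coefficients lie in the maximal ideal $(u_t,\ldots,u_{n-1})$, and the coefficient of $y^{p^{k(n-t)}}$ is a unit because $[p^k](x)\equiv x^{p^{kn}}$ modulo the maximal ideal of $\E^0$. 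This presents the power-series quotient as a free $\E^0/\mt$-module of rank $p^{k(n-t)}$, and base change to $\Lt/\mt$ preserves the rank.

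Next I would build the map by sending $y\mapsto y$. The proof of Proposition \ref{jk} identifies $y$ with $x^{p^{kt}}$ modulo $\mt$ inside $\Sect_{\G[p^k]}/\mt\cong(\Lt/\mt)[x]/(f_k(x))$. Combining $[p^k](x)=f_k(x)w_k(x)$ with $[p^k](x)\equiv [p^k]_t(x^{p^{kt}})\pmod{\mt}$ gives the equality
\[
[p^k]_t(y)=f_k(x)w_k(x)
\]
in $(\Lt/\mt)\powser{x}$, which vanishes modulo $f_k(x)$. Since $f_k(x)$ is monic, $(\Lt/\mt)\powser{x}/(f_k(x))$ coincides with $(\Lt/\mt)[x]/(f_k(x))$, so the power-series evaluation is unambiguous and the relation $[p^k]_t(y)=0$ holds in $\Sect_{\G[p^k]}/\mt$ and therefore in its subring $\Sect_{\G_{et}[p^k]}/\mt$. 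This produces a well-defined $\Lt/\mt$-algebra homomorphism
\[
\phi\co (\Lt/\mt)\otimes_{\E^0}\E^0\powser{y}/([p^k]_t(y))\longrightarrow \Sect_{\G_{et}[p^k]}/\mt,\quad y\mapsto y.
\]

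Finally, the target is generated by $y$ over $\Lt/\mt$ by Proposition \ref{jk}, so $\phi$ is surjective; since source and target are free $\Lt/\mt$-modules of the same finite rank $p^{k(n-t)}$, $\phi$ is an isomorphism of rings. The main technical point is justifying the power-series evaluation of $[p^k]_t$ at $y$ in the finite algebra, and this is handled by the monicness of $f_k(x)$: modulo a monic polynomial every power series reduces uniquely to a polynomial of bounded degree, so the power-series quotient coincides with the polynomial quotient.
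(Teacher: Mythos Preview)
Your proof is correct and rests on the same ingredients as the paper's: the identification $y\equiv x^{p^{kt}}\pmod{\mt}$ from Proposition~\ref{jk}, the congruence $[p^k](x)\equiv [p^k]_t(x^{p^{kt}})\pmod{\mt}$, and a rank count via Weierstrass preparation. The only cosmetic difference is that the paper factors $[p^k]_t(y)$ modulo $\mt$ as $f_{k,t}(y)w_{k,t}(y)$ with $f_{k,t}$ monic of degree $p^{k(n-t)}$ and argues directly that $j_k(y)\equiv f_{k,t}(y)\pmod{\mt}$ (so the two quotient rings literally coincide), whereas you produce the map $y\mapsto y$ and conclude by surjectivity plus equal rank.
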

\begin{proof}
We have noted that $[p^k](x) = f_k(x) \cdot w_k(x)$ where $w_k(x)$ is a unit, and that
\[
[p^k](x) = [p^k]_t(x^{p^{kt}}) = u_tx^{p^{tk}} + \ldots \bmod \mt.
\]
Thus $[p^k]_t(x^{p^{kt}}) = f_{k,t}(x^{p^{kt}})w_{k,t}(x^{p^{kt}}) \bmod \mt$, where $w_{k,t}$ is a unit. In the previous proposition we showed that $j_k(y) = f_{k,t}(y) \bmod \mt$. Thus
\[
\Sect_{\G_{et}[p^k]}/\mt \cong (\Lt/\mt)[y]/(j_k(y)) \cong (\Lt/\mt)\otimes_{\E^0} \E^0\powser{y}/([p^k]_t(y)).
\]
\end{proof}

\begin{prop}
The scheme $\G_{et}[p^k]$ is an \'etale scheme.
\end{prop}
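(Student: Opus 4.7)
The plan is to prove that the finite free $\Lt$-algebra $\Sect_{\G_{et}[p^k]} \cong \Lt[y]/(j_k(y))$ is \'etale over $\Lt$, or equivalently that $\Omega_{\Sect_{\G_{et}[p^k]}/\Lt} \cong \Sect_{\G_{et}[p^k]}/(j_k'(y))$ vanishes. Since $\Lt$ is $\mt$-adically complete, the ideal $\mt$ lies in the Jacobson radical of $\Lt$, and since $\Sect_{\G_{et}[p^k]}$ is module-finite over $\Lt$, the ideal $\mt\Sect_{\G_{et}[p^k]}$ lies in the Jacobson radical of $\Sect_{\G_{et}[p^k]}$. Applying Nakayama's lemma to the finite module $\Sect_{\G_{et}[p^k]}/(j_k'(y))$ reduces the problem to showing that $\Sect_{\G_{et}[p^k]}/\mt$ is \'etale over $\Lt/\mt$.

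By the preceding corollary,
\[
\Sect_{\G_{et}[p^k]}/\mt \cong (\Lt/\mt)\otimes_{\E^0}\E^0\powser{y}/([p^k]_t(y)),
\]
which Weierstrass preparation exhibits as a finite free $\Lt/\mt$-algebra, presented equivalently as a power series ring modulo the single relation $[p^k]_t(y)$. Its module of K\"ahler differentials is then generated by $dy$ subject to the relation $[p^k]_t'(y)\,dy = 0$, so \'etaleness mod $\mt$ is equivalent to $[p^k]_t'(y)$ being a unit in this quotient. The recalled expansion
\[
[p^k]_t(y) = u_t^{(p^{tk}-1)/(p^t-1)}\,y + O(y^2) \pmod{\mt}
\]
gives $[p^k]_t'(0) = u_t^{(p^{tk}-1)/(p^t-1)}$, which is a unit in $\Lt/\mt$ because $u_t$ has been inverted. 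Any power series with unit constant term is a unit, so $[p^k]_t'(y)$ is a unit in $(\Lt/\mt)\powser{y}$ and hence in the quotient, completing the argument.

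The main subtlety I anticipate is in the identification of the module of differentials from the power-series-quotient presentation: this relies on recognizing the ring, via Weierstrass preparation, as a finite free polynomial quotient $(\Lt/\mt)[y]/(p_{k,t}(y))$, so that the standard formula for $\Omega$ in terms of a defining polynomial $p_{k,t}$ applies, and on checking that $p_{k,t}'(y)$ and $[p^k]_t'(y)$ differ by a unit factor in the quotient (as $[p^k]_t(y) = p_{k,t}(y)\,u_{k,t}(y)$ with $u_{k,t}$ a unit).
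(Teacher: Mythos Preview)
Your argument is correct and follows essentially the same route as the paper: both reduce modulo $\mt$, show that $[p^k]_t'(y)$ is a unit in $\Sect_{\G_{et}[p^k]}/\mt$ because its constant term is a power of $u_t$, transfer this to $j_k'(y)$ via the Weierstrass factorization and the product rule, and then lift units (your Nakayama step is exactly the paper's ``units lift'', and your K\"ahler-differentials framing is equivalent to the paper's citation of Milne). The paper adds the observation that $[p^k]_t'(y)\equiv 0\pmod{u_t}$ before inverting $u_t$, but once $u_t$ is a unit your direct constant-term computation already suffices, so your version is slightly more streamlined.
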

\begin{proof}
Consider $\mt$ as an ideal of $\E$. We show that $u_t|[p^k]^{'}_t(y)$ in $(\E/\mt)\powser{y}/[p^k]_t(y)$. Indeed, 
\[
[p^k]_t(y) = [p^k]_{t+1}(y^{p^k}) \bmod u_t
\]
and as we are working in characteristic $p$, $[p^k]^{'}_{t}(y) = 0 \bmod u_t$.

Now it is clear that
\[
1 \otimes [p^k]_t'(y) = 1\otimes (u_t+\ldots) = u_t\otimes (1 + \ldots) 
\]
is a unit in $(\Lt/\mt)\otimes_{\E^0} \E^0\powser{y}/[p^k]_t(y)$.

But now $[p^k]_t(y) = j_{k}(y)w_{k,t}(y)$ implies that 
\[
[p^k]_t'(y) = j_k'(y)w_{k,t}(y) + j_k(y)(w_{k,t})'(y) \bmod \mt.
\]
As the second term is divisible by $j_k(y)$, it vanishes. We see that $j_k'(y)$ is a unit, and as units lift, $j_k'(y) \in \Sect_{\G_{et}[p^k]}^{\times}$. The result follows from Corollary 3.16 in \cite{Milne}. 
\end{proof}

\subsection{Splitting the Exact Sequence}
\label{splitting}
Our goal is to algebraically construct the initial extension of $\Lt$ over which the \pdiv group $\Lt\otimes\G_{\E}$ splits as the sum of the connected part and a constant \'etale part. This is similar to work of Katz-Mazur in Section 8.7 of \cite{KM}. Although we often suppress the notation, all groups in this section are considered to be constant group schemes.

Initially we want to find the $\Lt$-algebra that represents the functor
\[ 
\hom(\QZ^{n-t},\G):\co R \mapsto \hom_{\text{\pdiv}}(R\otimes \QZ^{n-t}, R \otimes \G).
\]
This was done for $t=0$ in \cite{hkr}. The construction here is analogous, but stated more algebro-geometrically. It turns out to be convenient for working with the coordinate and for reasons of variance to use the duals of groups as well as the groups themselves.

Let $\Lk = (\Z/p^k)^{n-t}$.
The following is a corollary of Theorem \ref{groupcoh}.
\begin{cor}
Given $\Lk$ and a set $\beta_{1}^{k}, \ldots, \beta_{n-t}^{k}$ of generators of $\Lk^*$ there is an isomorphism $\E^0(B\Lk) \cong \E^0\powser{x_1,\ldots,x_{n-t}}/([p^k](x_1),\ldots,[p^k](x_{n-t}))$.
\end{cor}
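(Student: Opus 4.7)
The plan is to reduce to the cyclic case already handled by Theorem \ref{groupcoh} via a Künneth-type argument. Since $\Lk = (\Z/p^k)^{n-t}$, the classifying space decomposes as $B\Lk \simeq (B\Z/p^k)^{n-t}$, and each chosen generator $\beta_i^k \in \Lk^*$ factors through the $i$-th projection $\Lk \twoheadrightarrow \Z/p^k$, providing a map $B\Lk \to B\Z/p^k$ and, after composition with the map classifying a fixed generator of $(\Z/p^k)^*$, a map to $BS^1$. These pull the formal group coordinate back to the coordinates $x_1, \ldots, x_{n-t}$ that will appear in the target.

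The core algebraic input is that the Künneth map
\[
\E^0(B\Z/p^k) \otimes_{\E^0} \cdots \otimes_{\E^0} \E^0(B\Z/p^k) \lra{} \E^0(B\Lk)
\]
is an isomorphism. This is immediate from Proposition \ref{wprep}, which tells us that each factor $\E^0(B\Z/p^k)$ is finitely generated and free as an $\E^0$-module; hence the Atiyah--Hirzebruch or Künneth spectral sequence for $\E$-cohomology of the product of finite CW complexes collapses and degenerates to a tensor product. Applying Theorem \ref{groupcoh} to each factor identifies
\[
\E^0(B\Z/p^k) \cong \E^0\powser{x_i}/([p^k](x_i)),
\]
where $x_i$ is the pullback of the formal group coordinate under the map determined by $\beta_i^k$.

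Taking the tensor product over $\E^0$ of these $n-t$ presentations yields
\[
\E^0\powser{x_1,\ldots,x_{n-t}}/([p^k](x_1),\ldots,[p^k](x_{n-t})),
\]
using the fact that each $[p^k](x_i)$ is (by Weierstrass preparation) a monic polynomial times a unit in $x_i$, so the tensor product of the quotient rings matches the quotient of the formal power series ring by the listed ideal. This gives the stated isomorphism.

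The only step requiring care is the Künneth isomorphism, but this is a well-documented consequence of the freeness in Proposition \ref{wprep}; no finer analysis is needed since we are tensoring finitely generated free modules and the universal coefficient issues vanish.
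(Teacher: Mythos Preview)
Your argument is correct and matches the paper's one-line proof, which simply invokes the product map $\beta_1^k \times \cdots \times \beta_{n-t}^k \co \Lambda_k \to S^1 \times \cdots \times S^1$ together with the fixed coordinate; your K\"unneth step (justified by Proposition~\ref{wprep}) is exactly what makes that work. One small imprecision: an arbitrary generating set of $\Lambda_k^*$ need not coincide with the coordinate projections of the given product decomposition of $\Lambda_k$, so the phrase ``factors through the $i$-th projection'' is not literally true---rather, the chosen generators themselves determine a product decomposition $\Lambda_k \cong (\Z/p^k)^{n-t}$ under which they become the projections, and then your argument proceeds as written.
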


In this case, one uses the map to the product $\beta_{1}^{k}\times \ldots \times \beta_{n-t}^{k}\co \Lk \lra{} S^1\times \ldots \times S^1 $ to obtain the result using the fixed coordinate.

Given a sequence of epimorphisms $\Lambda_1 \lla{\rho_2} \Lambda_2 \lla{\rho_3} \ldots$, let a coherent set of generators for the dual sequence be, for each $k$, a set of generators $\{\beta^{k}_1,\ldots,\beta^{k}_{n-t}\}$ for $\Lambda_{k}^{*}$ such that $p\cdot \beta^{k+1}_{h} = \rho_{k+1}^*(\beta^{k}_{h})$. It is clear that a coherent system of generators for the dual sequence exists for any sequence of epimorphisms of the form above.

\begin{prop}
Given a coherent system of generators for the dual sequence of the above sequence of epimorphisms, the map $\E^0(B\rho_k)\co \E^0(B\Lk) \lra{} \E^0(B\Lambda_{k+1})$ is induced by $x_i \mapsto [p](x_i)$.
\end{prop}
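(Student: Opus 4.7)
The plan is to unravel what the various classes $x_i \in \E^0(B\Lambda_k)$ mean as pullbacks and then show that the desired formula follows directly from the coherence relation $p \cdot \beta_i^{k+1} = \rho_{k+1}^*(\beta_i^k)$ together with the fact that the $p$-th power map on $BS^1$ induces the $p$-series on $\E^0(BS^1)$.

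First I would recall that, by the identification in the corollary above, the variable $x_i \in \E^0(B\Lambda_k)$ is precisely the image of the standard coordinate $x \in \E^0(BS^1)$ under the map
\[
\E^0(BS^1) \lra{\E^0(B\beta_i^k)} \E^0(B\Lambda_k),
\]
where $\beta_i^k \co \Lambda_k \lra{} S^1$ is the $i$-th chosen generator of $\Lambda_k^*$. Similarly $x_i \in \E^0(B\Lambda_{k+1})$ is the image of $x$ under $\E^0(B\beta_i^{k+1})$.

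Next I would compute $\E^0(B\rho_{k+1})(x_i)$ directly. By functoriality, this is the pullback of $x \in \E^0(BS^1)$ along the composite $\beta_i^k \circ \rho_{k+1} \co \Lambda_{k+1} \lra{} S^1$. The coherence hypothesis says exactly that this composite equals $p \cdot \beta_i^{k+1}$ as elements of $\Lambda_{k+1}^*$, i.e.\ that the diagram
\[
\xymatrix{
B\Lambda_{k+1} \ar[r]^{B\beta_i^{k+1}} \ar[d]_{B\rho_{k+1}} & BS^1 \ar[d]^{Bp} \\
B\Lambda_k \ar[r]_{B\beta_i^k} & BS^1
}
\]
commutes, where $Bp$ is the $p$-th power map on $BS^1$. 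Therefore
\[
\E^0(B\rho_{k+1})(x_i) \;=\; \E^0(B\beta_i^{k+1})\bigl(\E^0(Bp)(x)\bigr).
\]

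Finally I would invoke the standard fact that, for the coordinate $x$ on $\G_E = \Spf(\E^0(BS^1))$, the map $\E^0(Bp)$ sends $x$ to $[p](x)$; this is essentially the definition of the $p$-series of the formal group law attached to $\E$. Applying $\E^0(B\beta_i^{k+1})$ to $[p](x)$, which is a power series in $x$, yields $[p](x_i)$ in $\E^0(B\Lambda_{k+1})$ since $\beta_i^{k+1}$ pulls $x$ back to $x_i$. This gives the formula. No step presents a real obstacle; the only thing to be careful about is bookkeeping the distinction between the two rings in which the symbol $x_i$ lives and verifying the commutative square arising from the coherence condition, which is immediate from the definition.
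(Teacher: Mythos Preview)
Your argument is correct and is precisely the unraveling that the paper's one-line proof (``follows immediately from the proof of the previous corollary and the definition of a coherent system of generators'') points to. You have simply made explicit the commutative square encoded in the coherence condition $\rho_{k+1}^*(\beta_i^k)=p\cdot\beta_i^{k+1}$ and the identification of $x_i$ as the pullback of the coordinate along $B\beta_i^k$; there is no difference in approach.
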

\begin{proof}
This follows immediately from the proof of the previous corollary and the definition of a coherent system of generators.
\end{proof}

Given $\beta_{i}^{k} \co \Lk \lra{} S^1$, a generator of the dual group, and $\beta^k \co \Z/p^k \lra{} S^1$ as defined right after Prop \ref{wprep}, there exists a unique $f_i \co \Lk \lra{} \Z/p^k$ making the triangle commute. Using $\{f_i\}_{i \in \{1,\ldots,n-t\}}$ and \cite{hkr}, Lemma 5.9, this provides an isomorphism 
\[
\E^0(B\Z/p^k)^{\otimes n-t} \lra{\cong} \E^0(B\Lk).
\] 

Next consider the functor from $\Lt$-algebras to sets given by
\[
\hom(\Lk^*,\G[p^k])\co R \mapsto \hom_{gp-scheme}(R\otimes \Lk^{*},R\otimes \G[p^k])
\]

\begin{lemma}
\label{representable}
The functor $\hom(\Lk^*,\G[p^k])$ is representable by the ring $\Lt \otimes_{\E^0} \E^0(B\Lk)$. In fact, for every choice of generators of $\Lk^*$ there is an isomorphism
\[
\hom_{\Lt}(\Lt \otimes_{\E^0} \E^0(B\Lk), -) \cong \hom(\Lk^*,\G[p^k]).
\]
\end{lemma}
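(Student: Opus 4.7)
The strategy is to reduce the claim to the rank-one case and then invoke the tensor-product decomposition of $\E^0(B\Lk)$ already established before the lemma. Fix generators $\beta_1^k,\ldots,\beta_{n-t}^k$ of $\Lk^*$, so that $\Lk^* \cong (\Z/p^k)^{n-t}$ is a free $\Z/p^k$-module of rank $n-t$ on these generators.

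First I would analyze the functor of points. Since the source $R\otimes\Lk^*$ is a constant group scheme that, as an abstract abelian group, is free of rank $n-t$ over $\Z/p^k$, a homomorphism of group schemes to $R\otimes\G[p^k]$ is specified by its values on the $\beta_i^k$. Because $R\otimes\G[p^k]$ is itself killed by $p^k$, the relation $p^k\cdot \beta_i^k = 0$ imposes no additional constraint on these values. Hence there is a natural bijection
\[
\hom_{gp\text{-}sch}(R\otimes\Lk^*, R\otimes\G[p^k]) \;\cong\; \G[p^k](R)^{n-t}.
\]

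Second, I would identify the representing object in the rank-one case. By the construction of $\G$ and Theorem \ref{groupcoh},
\[
\G[p^k](R) \;=\; \hom_{\Lt}\bigl(\Lt\otimes_{\E^0}\E^0(B\Z/p^k),\, R\bigr).
\]
Taking the $(n-t)$-fold product over $\Lt$-algebras and using that coproduct in $\Lt$-algebras is tensor over $\Lt$, together with the base-change identity $\Lt\otimes_{\E^0}(A\otimes_{\E^0}B) \cong (\Lt\otimes_{\E^0}A)\otimes_{\Lt}(\Lt\otimes_{\E^0}B)$, gives
\[
\G[p^k](R)^{n-t} \;\cong\; \hom_{\Lt}\!\Bigl(\Lt\otimes_{\E^0}\E^0(B\Z/p^k)^{\otimes_{\E^0}(n-t)},\, R\Bigr).
\]

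Finally, I would apply the isomorphism
\[
\E^0(B\Z/p^k)^{\otimes(n-t)} \;\lra{\cong}\; \E^0(B\Lk)
\]
described immediately before the lemma (built from the factor maps $f_i\co \Lk\to\Z/p^k$ and Lemma 5.9 of \cite{hkr}, which itself depends precisely on the chosen generators). Base-changing along $\E^0\to\Lt$ and composing the bijections above yields
\[
\hom_{\Lt}\bigl(\Lt\otimes_{\E^0}\E^0(B\Lk),\, R\bigr) \;\cong\; \hom(\Lk^*,\G[p^k])(R),
\]
natural in the $\Lt$-algebra $R$.

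The only mild subtlety is bookkeeping: one should check that the isomorphism produced this way is indeed induced by the chosen $\beta_i^k$ via the stated identification, and that naturality in $R$ holds (which is automatic since every step is natural). There is no serious obstacle here, because the source group scheme is constant and free, reducing everything to tracking where generators go.
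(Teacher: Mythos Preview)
Your proposal is correct and follows essentially the same route as the paper: both arguments fix generators $\beta_1^k,\ldots,\beta_{n-t}^k$, use them to identify a group-scheme map $\Lk^*\to R\otimes\G[p^k]$ with an $(n-t)$-tuple of $R$-points of $\G[p^k]$, and then invoke the tensor decomposition $\E^0(B\Lk)\cong \E^0(B\Z/p^k)^{\otimes(n-t)}$ to assemble these into a single $\Lt$-algebra map from $\Lt\otimes_{\E^0}\E^0(B\Lk)$. The paper phrases the middle step via the Hopf-algebra map $R\otimes_{\Lt}\Sect_{\G[p^k]}\to\prod_{\Lk^*}R$ and then restricts to the generator factors, whereas you go directly through $\G[p^k](R)^{n-t}$; these are the same argument with different bookkeeping.
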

\begin{proof}
Let $\{\beta_{1}^{k},\ldots, \beta_{n-t}^{k}\}$ be generators of $\Lk^*$. Recall that these generators determine the isomorphism 
\[
\Lt\otimes_{\E^0} \E^0(B\Lk) \cong \Lt\otimes_{\E^0} \E^0(B\Z/p^k)^{\otimes n-t} = \Sect_{\G[p^k]}^{\otimes (n-t)}.
\]
Let $f\co \Lk^* \lra{} R\otimes\G[p^k]$, then $f^*\co R\otimes_{\Lt}\Sect_{\G[p^k]} \lra{} \Prod{\Lk^*}R$. The generators $\{\beta_{1}^{k},\ldots,\beta_{n-t}^{k}\}$ induce $n-t$ maps of $R$-algebras $R\otimes_{\Lt}\Sect_{\G[p^k]} \lra{} R$, which induces a map $R\otimes_{\Lt}\Lt\otimes_{\E^0} \E^0(B\Lk) \lra{} R$. This is determined by the map of $\Lt$-algebras
\[
\Lt\otimes_{\E^0} \E^0(B\Lk) \lra{} R.
\]
The reverse process gives the map
\[
\hom_{\Lt}(\Lt \otimes_{\E^0} \E^0(B\Lk), -) \lra{} \hom(\Lk^*,\G[p^k]).
\]
\end{proof}

Now we permanently fix a sequence of epimorphisms
\begin{align*}
\Lambda_1 \lla{\rho_2} \Lambda_2 \lla{\rho_3} \Lambda_3 \lla{} \ldots  
\end{align*}
and a coherent set of generators for the duals, $\{\beta_{i}^{k}\}_{i \in 1,\ldots,(n-t)} \in \Lk^*$. 

Let 
\[
C_t' = \Colim{k} \text{ } \Lt\otimes_{\E^0} \E^0(B\Lk),
\]
where the colimit is over the maps $\Lt\otimes_{\E^0} \E^0(B\rho_k)$.

Next we show that $C_t'$ represents the the functor from $\Lt$-algebras to sets given by
\[
\hom(\QZ^{n-t},\G)\co R \mapsto \hom_{\text{\pdiv}}(R\otimes \QZ^{n-t}, R \otimes \G).
\]

\begin{prop}
\label{bigring}
The $\Lt$-algebra $C_t'$ represents the functor $\hom(\QZ^{n-t},\G)$.
\end{prop}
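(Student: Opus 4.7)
The plan is to apply Lemma \ref{representable} levelwise and then pass to the colimit, using the fact that the coherent system of generators realizes $(\QZ)^{n-t}$ as $\colim_k \Lk^*$ compatibly with the p-divisible group structure. Since $C_t'$ is defined as a filtered colimit of $\Lt$-algebras and $\hom_{\Lt}(-, R)$ converts colimits into limits, I would begin by identifying
\[
\hom_{\Lt}(C_t', R) \;\cong\; \lim_k \hom_{\Lt}(\Lt\otimes_{\E^0}\E^0(B\Lk), R) \;\cong\; \lim_k \hom(\Lk^*, R\otimes \G[p^k]),
\]
where the second isomorphism is Lemma \ref{representable} applied at each level $k$.

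Next, the coherence relation $p\cdot \beta_i^{k+1} = \rho_{k+1}^*(\beta_i^k)$ says precisely that the dual maps $\rho_{k+1}^{*}\co \Lk^* \hookrightarrow \Lambda_{k+1}^*$ are injective and realize each $\Lk^*$ as the $p^k$-torsion of $\colim_k \Lk^* \cong (\QZ)^{n-t}$. Consequently, a p-divisible group homomorphism $R\otimes (\QZ)^{n-t}\to R\otimes \G$ is exactly a compatible family of group scheme maps $f_k\co \Lk^* \to R\otimes \G[p^k]$, where compatibility means that $f_{k+1}\circ \rho_{k+1}^{*}$ agrees with the composite of $f_k$ with the structural inclusion $\G[p^k]\hookrightarrow \G[p^{k+1}]$.

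The main technical point, and the only step requiring a small computation, is to verify that the transition maps in the limit above correspond under Lemma \ref{representable} to this notion of compatibility. By the proposition preceding Lemma \ref{representable}, the map $\E^0(B\rho_{k+1})$ sends $x_i\mapsto [p](x_i)$, so on $\hom_{\Lt}(-,R)$ it takes a ring map $g$ at level $k+1$ to the ring map at level $k$ determined by $x_i\mapsto [p](g(x_i))$. On the p-divisible side, restricting $f\co\Lambda_{k+1}^*\to R\otimes \G[p^{k+1}]$ along $\rho_{k+1}^{*}$ sends the generator $\beta_i^k$ to $f(\rho_{k+1}^*(\beta_i^k)) = f(p\cdot \beta_i^{k+1}) = p\cdot f(\beta_i^{k+1})$, whose $x$-coordinate is precisely $[p]$ applied to the $x$-coordinate of $f(\beta_i^{k+1})$. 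Thus the level-$k$ isomorphisms of Lemma \ref{representable} assemble into an isomorphism of cofiltered systems, giving the natural isomorphism
\[
\hom_{\Lt}(C_t', R) \;\cong\; \hom_{\text{\pdiv}}(R\otimes (\QZ)^{n-t}, R\otimes \G),
\]
which is the desired representability.
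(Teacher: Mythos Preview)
Your proof is correct and follows essentially the same approach as the paper: both arguments convert $\hom_{\Lt}(C_t',R)$ into a limit over $k$, apply Lemma \ref{representable} levelwise, and then identify elements of the limit with maps of \pdiv groups via the commuting square relating $\rho_{k+1}^*$ and $i_{k+1}$. Your version simply spells out the compatibility check in coordinates (using $x_i\mapsto [p](x_i)$ and $p\cdot\beta_i^{k+1}=\rho_{k+1}^*(\beta_i^k)$), whereas the paper records only the commutativity of the relevant square.
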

\begin{proof}
First notice that 
\begin{align*}
\hom_{\Lt}(C_t',R) &\cong \hom_{\Lt}(\Colim{k} \text{ } \Lt\otimes_{\E^0} \E^0(B\Lk),R) \\
&\cong \Lim{k} \text{ } \hom_{gp-scheme}(R\otimes \Lk^*, R\otimes \G[p^k]).
\end{align*}
An element of the inverse limit is precisely a map of \pdiv groups. This is because the diagram
\[
\xymatrix{\Lambda_{k-1}^* \ar[r]^{\rho_{k}^*} \ar[d] & \Lk^* \ar[d] \\
				\G[p^{k-1}] \ar[r]^{i_k} & \G[p^k]}
\]
commutes for all $k$.
\end{proof}

\begin{cor}
Over $C_t'$ there is a canonical map of \pdiv groups $\QZ^{n-t} \lra{} \G$.
\end{cor}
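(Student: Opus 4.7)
The plan is to apply a Yoneda-style argument to Proposition \ref{bigring}. Since $C_t'$ represents the functor $\hom(\QZ^{n-t},\G)$ on $\Lt$-algebras, the set $\hom_{\Lt}(C_t', C_t')$ is in natural bijection with $\hom_{\text{\pdiv}}(C_t' \otimes \QZ^{n-t}, C_t' \otimes \G)$. The canonical map of \pdiv groups is defined to be the image of the identity map $\Id_{C_t'}$ under this bijection.

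To make this explicit and to confirm no additional coherence has to be checked, I would unwind the construction at each finite level. By Lemma \ref{representable}, the chosen coherent generators $\{\beta_1^k,\ldots,\beta_{n-t}^k\}$ of $\Lambda_k^*$ turn the identity map of $\Lt\otimes_{\E^0}\E^0(B\Lk)$ into a universal group scheme homomorphism $\Lambda_k^* \to \G[p^k]$ defined over this ring. Base-changing along the structure map $\Lt\otimes_{\E^0}\E^0(B\Lk) \to C_t'$ gives a group scheme map $\Lambda_k^* \to \G[p^k]$ over $C_t'$. The commuting square
\[
\xymatrix{\Lambda_{k-1}^* \ar[r]^{\rho_k^*} \ar[d] & \Lambda_k^* \ar[d] \\ \G[p^{k-1}] \ar[r]^{i_k} & \G[p^k]}
\]
already verified in the proof of Proposition \ref{bigring} shows that these level-wise maps are compatible as $k$ varies, so taking the colimit assembles them into a map of \pdiv groups $\QZ^{n-t} \to \G$ over $C_t'$. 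There is really no obstacle here: the statement is a formal consequence of representability, and the only content is observing that the universal element of a representable functor gives precisely the desired canonical morphism.
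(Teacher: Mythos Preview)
Your proposal is correct and matches the paper's approach exactly: the paper's proof is the single sentence ``This is the map of \pdiv groups corresponding to the identity map $\Id_{C_t'}$,'' which is precisely your Yoneda argument. Your additional unwinding at finite levels is accurate elaboration but not required.
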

\begin{proof}
This is the map of \pdiv groups corresponding to the identity map $\Id_{C_t'}$.
\end{proof}

Because there is a canonical map $\QZ^{n-t} \lra{} \G$ over $C_t'$ there is also a canonical map $\G_0 \oplus \QZ^{n-t} \lra{} \G$ using the natural inclusion $\G_0 \lra{} \G$. 

The \pdiv group $\G_0 \oplus \QZ^{n-t}$ is a \pdiv group of height $n$ with \'etale quotient the constant \pdiv group $\QZ^{n-t}$. Over $C_t'$ the map $\G_0 \oplus \QZ^{n-t} \lra{} \G$ induces a map $\QZ^{n-t} \lra{} \G_{et}$; our next goal is to find the minimal ring extension of $C_t'$ over which this map is an isomorphism. To understand this we must analyze $\G_{et}$ and prove an analogue of Proposition 6.2 in \cite{hkr}. 

We move on to analyzing $\G_{et}$ over $C_t'$, that is, we study the canonical map $\QZ^{n-t} \lra{} \G_{et}$ and determine the minimal ring extension of $C_t'$ over which it is an isomorphism. We begin with a fact about $\G_{et}$ and some facts about finite group schemes.

\begin{prop}
\label{etale-constant}
\cite{Dem} Let $L_t/\mt \lra{} K$ be a map to an algebraically closed field. Then $K \otimes \G_{et} \cong (\QZ)^{n-t}$.
\end{prop}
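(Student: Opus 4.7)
The plan is to combine the étaleness of $\G_{et}$ established in the previous proposition with the classical classification of divisible $p$-primary abelian groups. Note first that $K$ has characteristic $p$, since $p \in \mt$. Étaleness is preserved by base change, so $K \otimes \G_{et}[p^k]$ is a finite étale $K$-scheme of constant rank $p^{k(n-t)}$. Over an algebraically closed field a finite étale scheme is canonically the disjoint union of copies of $\Spec(K)$ indexed by its $K$-points, so writing $A_k = \G_{et}[p^k](K)$, I would identify
\[
K \otimes \G_{et}[p^k] \cong \Coprod{a \in A_k} \Spec(K),
\]
with $|A_k| = p^{k(n-t)}$ and with $A_k$ a finite abelian group by transport of the group scheme structure.

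Next I would show that $A := \Colim{k} A_k$ is a divisible $p$-primary abelian group. The defining exact sequence
\[
0 \lra{} \G_{et}[p^k] \lra{} \G_{et}[p^{k+1}] \lra{p^k} \G_{et}[p^{k+1}]
\]
of a \pdiv group, passed to $K$-points, identifies $A_k$ with $A_{k+1}[p^k]$. A quick order count ($|p \cdot A_{k+1}| = |A_{k+1}|/|A_{k+1}[p]| = p^{k(n-t)} = |A_k|$ together with $p \cdot A_{k+1} \subseteq A_{k+1}[p^k] = A_k$) then forces multiplication by $p$ on $A_{k+1}$ to surject onto $A_k$, so $A$ is divisible. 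Its $p$-torsion is $A_1$, an $\F_p$-vector space of dimension $n-t$.

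Finally I would invoke the structure theorem for divisible abelian $p$-torsion groups (the content of the citation to \cite{Dem}): every such group is a direct sum of copies of $\QZ$, with the number of summands equal to the $\F_p$-dimension of its $p$-torsion. This gives $A \cong (\QZ)^{n-t}$, and reversing the equivalence between finite étale $K$-schemes and their $K$-points produces an isomorphism $K \otimes \G_{et} \cong (\QZ)^{n-t}$ of \pdiv groups over $K$, since on both sides the transition maps are the canonical inclusions $(\Zp{k})^{n-t} \hookrightarrow (\Zp{k+1})^{n-t}$. The main obstacle is simply the invocation of the classification theorem for divisible $p$-primary abelian groups; the remainder of the argument is routine bookkeeping with the ind-structure.
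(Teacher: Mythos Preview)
Your argument is correct. The paper does not actually supply a proof of this proposition; it simply records the statement with a citation to Demazure and moves on. Your write-up is precisely the standard expansion of that citation: base change preserves \'etaleness, a finite \'etale scheme over an algebraically closed field is constant, and a constant \pdiv group of height $n-t$ is forced to be $(\QZ)^{n-t}$ by the structure theorem for $p$-primary divisible abelian groups. One minor remark: the attribution ``the content of the citation to \cite{Dem}'' slightly misdescribes what Demazure provides---his book gives the classification of \'etale \pdiv groups over a perfect (or algebraically closed) field via the equivalence with $\mathrm{Gal}$-modules, which over an algebraically closed field reduces exactly to the divisible-group classification you invoke. Otherwise there is nothing to add.
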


Prior to proving our analogue of Prop 6.2 in \cite{hkr} we need a key lemma.
\begin{lemma}
\label{subtraction}
Let $\G$ be a finite free commutative group scheme over a ring $R$ such that $\Sect_{\G} \cong R[x]/(f(x))$ where $f(x)$ is a monic polynomial such that $x|f(x)$. Then in $\Sect_{\G\times\G} \cong R[x]/(f(x))\otimes_R R[y]/(f(y))$ the two ideals $(x-y)$ and $(x-_{\G}y)$ are equal. That is $x-_{\G}y = (x-y)\cdot u$ where $u$ is a unit.
\end{lemma}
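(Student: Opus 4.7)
The plan is to establish both inclusions of the ideals and then extract the explicit unit from a formal-group-law identity.

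For $(x-_\G y) \subseteq (x-y)$, consider the diagonal $\Delta\colon \G \to \G\times\G$, which pulls back to the surjection $\Sect_{\G\times\G} \to \Sect_{\G}$ sending $x \mapsto x$ and $y \mapsto x$. Its kernel is the principal ideal $(x-y)$, and since $\Delta^{*}(x-_\G y) = x -_\G x = 0$, we conclude $x-_\G y \in (x-y)$.

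For the reverse inclusion, use the hypothesis $x \mid f(x)$, which places the identity of $\G$ at the origin. After a standard choice of polynomial representatives, one may arrange that the addition map $\mu^{*}(x) = F(x,y) \in \Sect_{\G\times\G}$ satisfies $F(x,0) = x$ and $F(0,y) = y$, and hence
\[
F(x,y) = x + y + xy \cdot G(x,y)
\]
for some $G(x,y) \in R[x,y]$. The group axiom $y +_\G (x -_\G y) = x$ then becomes
\[
(x-_\G y) \cdot \bigl(1 + y \cdot G(y, x-_\G y)\bigr) = x - y
\]
in $\Sect_{\G\times\G}$, which yields $(x-y) \subseteq (x-_\G y)$ and hence the equality of the two ideals.

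Setting $U = 1 + y \cdot G(y, x-_\G y)$, we have $x-y = U\cdot(x-_\G y)$ with $U \equiv 1$ modulo the ideal generated by $y$. To produce the unit asserted in the lemma, it remains to show $U$ is a unit in $\Sect_{\G\times\G}$; then $u = U^{-1}$ realizes $x-_\G y = u \cdot (x-y)$. I would attempt this invertibility check by deriving the companion identity $x - y = V \cdot (x-_\G y)$ with $V \equiv 1$ modulo $(x)$ starting from the rearranged axiom $\iota(y) +_\G x = x -_\G y$, and then analyzing the product $UV$. This final invertibility is the main obstacle, and morally reflects the fact that translation by $y$ is an isomorphism of $\G$ recorded inside $\Sect_{\G \times \G}$.
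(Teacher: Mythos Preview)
Your argument for the equality of ideals is correct, and it proceeds quite differently from the paper. The paper gives a scheme-theoretic proof: both the diagonal $\Delta\colon \G \to \G\times\G$ and the inclusion $i\colon \ker(-_\G) \hookrightarrow \G\times\G$ are identified, via the functor of points, as the equalizer of the two projections $\pi_1,\pi_2\colon \G\times\G \to \G$, so they are isomorphic closed immersions and hence cut out the same ideal. One then reads off $\ker(\Delta^*) = (x-y)$ directly, and $\ker(i^*) = (x-_\G y)$ from the pullback square defining $\ker(-_\G)$ together with the hypothesis $x\mid f(x)$, which pins the identity section at $x\mapsto 0$. Your approach is more hands-on: you obtain $(x-_\G y)\subseteq (x-y)$ from $\Delta^*$ as the paper does, but for the reverse inclusion you expand the group law as $F(x,y) = x+y+xy\,G(x,y)$ and extract the relation $(x-_\G y)\cdot U = x-y$ from the axiom $y +_\G (x-_\G y) = x$. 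The paper's route is cleaner and coordinate-free; yours has the virtue of producing an explicit cofactor $U$.

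On the point you flag as the ``main obstacle'': you are right that the invertibility of $U$ does not follow from what you have written, but note that the paper's proof does not establish it either. The paper proves only the equality of ideals and then restates this as ``$x-_\G y = (x-y)\cdot u$ for a unit $u$'' without further argument; in a ring with zero-divisors, equal principal ideals need not have unit-related generators, and $x-y$ can be a zero-divisor in $\Sect_{\G\times\G}$. So your proof is already on par with the paper's. Moreover, the sole application of this lemma (in the proof of Proposition~\ref{unit-iso}) needs only that $\phi_y(\alpha_i)-\phi_y(\alpha_j)$ is a unit if and only if $\phi_y(\alpha_i)-_{\G_{et}}\phi_y(\alpha_j)$ is, and that follows immediately from the equality of ideals. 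You need not pursue the invertibility of $U$ further.
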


\begin{proof}
Consider the two maps, $\De \co \G \lra{} \G\times\G$ and $i\co \ker(-_{\G}) \lra{} \G\times\G$ the inclusion of the kernel of $\G\times\G \lra{-_{\G}} \G$. By considering the functor of points it is clear that both are the equalizer of 
\[
\xymatrix{\G\times\G \ar@<1ex>[r]^(.6){\pi_1} \ar[r]_(.6){\pi_2} & \G.}
\]
Thus we have the commutative triangle
\[
\xymatrix{\ker(-_{\G}) \ar[rr]^(.55){\cong} \ar[dr] & & \G \ar[dl] \\
				& \G\times\G &}
\] 
After applying global sections it suffices to find the generators of the kernels of $\De^*$ and $i^*$. It is clear that $\De^*\co R[x]/(f(x))\otimes_R R[y]/(f(y)) \lra{} R[x]/(f(x))$ must send $x \mapsto x$ and $y \mapsto x$. Thus $(x-y)$ must be in $\ker(\De^*)$ and as $\De^*$ is surjective and we have the isomorphism $R[x]/(f(x))\otimes_R R[y]/(f(y)) / (x-y) \cong R[x]/(f(x))$, the ideal $(x-y)$ must be the whole kernel. 

To understand $i^*$, we note that $\ker(-_{\G})$ is the pullback
\[
\xymatrix{\ker(-_{\G}) \ar[r] \ar[d] \ar @{} [dr] |(.3){\ulcorner} & \G\times\G \ar[d]^{-_{\G}} \\
				e \ar[r] & \G}.
\] 
Global sections gives $\Sect_{\ker(-_{\G})} \cong R \otimes_{R[x]/(f(x))} (R[x]/(f(x))\otimes_R R[y]/(f(y)))$ where $x$ is sent to $0 \in R$ and $x-_{\G}y$ in $R[x]/(f(x))\otimes_R R[y]/(f(y))$. Thus the kernel of $i^*$ is the ideal $(x-_{\G}y)$.
\end{proof}

Let $\al \in \Lk^*$. Given a homomorphism
\[
\phi \co \Lk^* \lra{} R\otimes \G_{et}[p^k],
\]
let $\phi(\al)$ be the restriction of this map to $\al$. Using the description of $\Sect_{\G_{et}[p^k]}$ from Prop \ref{jk}, the global sections of $\phi(\al)$ are the map
\[
\phi(\al)^*\co R\otimes_{\Lt}\Lt[y]/(j_k(y)) \lra{} R.
\]
Now we define a function 
\[
\phi_y\co \Lk^* \lra{} R,
\]
by $\phi_y(\al) = \phi(\al)^*(y)$. 
The following is our analogue of Prop 6.2 in \cite{hkr}.

\begin{prop}
\label{unit-iso}
Let $R$ be an $\Lt$-algebra. The following conditions on a homomorphism
\[
\phi \co \Lk^* \lra{} R\otimes \G_{et}[p^k]
\]
are equivalent: \\
i. For all $\al \neq 0 \in \Lk^*$, $\phi_y(\al)$ is a unit. \\
ii. The Hopf algebra homomorphism
\[
R[y]/(j_k(y)) \cong R\otimes_{\Lt}\Sect_{G_{et}[p^k]} \lra{} \Prod{\Lk^*}R
\]
is an isomorphism.
\end{prop}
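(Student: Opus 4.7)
My plan is to recognize the map in condition (ii) as a Vandermonde-style linear map between free $R$-modules of the same rank, factor its determinant as a product of differences $\phi_y(\al) - \phi_y(\al')$, and then use Lemma \ref{subtraction} to translate these differences into the values $\phi_y(\al - \al')$ appearing in condition (i).

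First, by Prop \ref{jk}, $R\otimes_{\Lt}\Sect_{\G_{et}[p^k]} \cong R[y]/(j_k(y))$ is free over $R$ of rank $N := p^{k(n-t)} = |\Lk^*|$ with basis $\{1, y, y^2, \ldots, y^{N-1}\}$, and $\prod_{\Lk^*}R$ is free of the same rank. The map in (ii) is an $R$-algebra homomorphism sending $y \mapsto (\phi_y(\al))_{\al}$, so its matrix in the bases above has $(\al, i)$-entry $\phi_y(\al)^i$, a Vandermonde matrix. Its determinant is, up to sign, $\prod_{\al \neq \al'}(\phi_y(\al) - \phi_y(\al'))$. Being an $R$-linear map between finite free $R$-modules of the same rank, it is an isomorphism if and only if this determinant is a unit in $R$, and since a product is a unit iff each factor is, condition (ii) is equivalent to: $\phi_y(\al) - \phi_y(\al')$ is a unit for all $\al \neq \al'$.

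Next I would apply Lemma \ref{subtraction} to $\G_{et}[p^k]$, noting that its hypothesis $y \mid j_k(y)$ holds because the identity section $e\co \Spec(\Lt) \to \G_{et}[p^k]$ sends $y \mapsto 0$: indeed $y = N_{\pi}\mu^*(x)$ by the proof of Prop \ref{jk}, and $x$ vanishes at the identity of $\G[p^k]$, so $j_k(0)=0$. The lemma yields $y -_{\G_{et}[p^k]} y = (y_1 - y_2) \cdot u$ in $R[y_1,y_2]/(j_k(y_1), j_k(y_2))$ for some unit $u$. For any $\al, \al' \in \Lk^*$, the pair $(\phi(\al), \phi(\al'))$ gives a ring map from this double tensor product to $R$ sending $y_1 \mapsto \phi_y(\al)$ and $y_2 \mapsto \phi_y(\al')$; composing with the subtraction map upstairs corresponds to the group homomorphism $\al-\al' \mapsto \phi(\al-\al')$. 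Pulling the above relation back yields $\phi_y(\al - \al') = (\phi_y(\al) - \phi_y(\al')) \cdot u'$ for a unit $u' \in R$.

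Combining, condition (ii) is equivalent to the statement that $\phi_y(\al - \al')$ is a unit for all $\al \neq \al'$, and letting $\gamma := \al - \al'$ range over $\Lk^* \setminus \{0\}$ this is precisely condition (i). The main subtlety is the preparatory verification that $y \mid j_k(y)$, which is needed before Lemma \ref{subtraction} can be applied to $\G_{et}[p^k]$; once this is settled the remainder is a direct Vandermonde computation combined with a routine pullback of the relation provided by the lemma.
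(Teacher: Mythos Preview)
Your proof is correct and follows essentially the same route as the paper: recognize the map as a Vandermonde matrix, reduce (ii) to invertibility of the product $\prod_{\al\neq\al'}(\phi_y(\al)-\phi_y(\al'))$, and use Lemma~\ref{subtraction} to convert each factor into a unit multiple of $\phi_y(\al-\al')$. You add an explicit verification of the hypothesis $y\mid j_k(y)$ needed to invoke Lemma~\ref{subtraction}, which the paper's proof leaves implicit; this is a welcome bit of care, and your argument via the identity section (or equivalently via $i^*y=0$ and the fact that the identity of $\G[p^k]$ factors through $\G_0[p^k]$) is sound.
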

\begin{proof}
The proof of this proposition follows the proofs of Proposition 6.2 and Lemma 6.3 in \cite{hkr}. With respect to the bases consisting of the powers of $y$ and the obvious basis of the product ring corresponding to the elements of $\Lk^*$, the matrix of the Hopf algebra map is the Vandermonde matrix of the set $\phi_y(\Lk^*)$. 

Assuming i. we must show that the determinant of the Vandermonde matrix, $\De$, is a unit. As in \cite{hkr}, for elements $x,y$ of a ring $S$, we will write $x \sim y$ if $x=uy$ for $u$ a unit. As the matrix is Vandermonde, $\De \sim \Prod{\al_i \neq \al_j \in \Lk^*} (\phi_y(\al_i) - \phi_y(\al_j))$.

Using Prop \ref{subtraction} we have
\begin{align*}
\prod(\phi_y(\al_i) - \phi_y(\al_j)) &\sim \prod(\phi_y(\al_i) -_{\G_{et}} \phi_y(\al_j)) \\ &= \prod(\phi_y(\al_i - \al_j)) \\
&= \Prod{\al \neq 0} \Prod{\al_i - \al_j = \al} \phi_y(\al)\\
&= \Prod{\al \neq 0}\phi_y(\al)^{|\Lk^*|}. 
\end{align*}

In a ring a product of elements is a unit if and only if each of the elements is a unit. Thus the formulas above imply the reverse implication as well.
\end{proof}

As an aside, in \cite{hkr} it is also shown that $p$ must be inverted for $\phi$ to be an isomorphism. This is not the case in our situation. The analagous statement is that $u_t$ must be inverted, and it was already inverted in order to form $\G_{et}$.

Prop \ref{unit-iso} seems to imply that the smallest extension of $C_t'$ over which the canonical map $\QZ^{n-t} \lra{\phi} \G_{et}$ is an isomorphism is precisely the localization with respect to the multiplicatively closed subset generated by $\phi_y(\alpha)$ for all $\alpha \in \QZ^{n-t}$. This is what we prove next.

\begin{prop}
\label{Ct}
The functor from $\Lt$-algebras to sets given by
\[
\Iso_{\G_0[p^k]/}(\G_0[p^k]\oplus\Lk^*,\G[p^k]) \co R \mapsto \Iso_{\G_0[p^k]/}(R \otimes \G_0[p^k]\oplus\Lk^*,R \otimes \G[p^k])
\]
is representable by a nonzero ring $C^{k}_{t}$ with the property that the map $\Lt/\mt \lra{i} C^{k}_{t}/(\mt\cdot C^{k}_{t})$ is faithfully flat.
\end{prop}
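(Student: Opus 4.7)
First I would unpack the functor. A homomorphism $R \otimes (\G_0[p^k] \oplus \Lk^*) \to R \otimes \G[p^k]$ over $R \otimes \G_0[p^k]$ is determined by its restriction to the $\Lk^*$ summand, i.e.\ by a homomorphism $\phi \co \Lk^* \to R \otimes \G[p^k]$. Using the short exact sequence $0 \to \G_0[p^k] \to \G[p^k] \to \G_{et}[p^k] \to 0$ from Section \ref{exactsequence}, a short diagram chase on $R$-algebra points shows that the combined map is an isomorphism of group schemes if and only if the induced composite $\Lk^* \to R \otimes \G_{et}[p^k]$ is an isomorphism.

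Next I would combine Lemma \ref{representable} with Proposition \ref{unit-iso}. Set $A = \Lt \otimes_{\E^0} \E^0(B\Lk)$, which by Lemma \ref{representable} represents $\hom(\Lk^*, \G[p^k])$; write $\phi^{\mathrm{univ}} \co \Lk^* \to A \otimes \G[p^k]$ for the universal homomorphism and $\phi^{\mathrm{univ}}_y(\alpha) \in A$ for the elements defined just above Proposition \ref{unit-iso}. Let $S \subset A$ be the multiplicative subset generated by $\{\phi^{\mathrm{univ}}_y(\alpha) : \alpha \in \Lk^* \setminus \{0\}\}$ and define
\[
C^{k}_{t} = S^{-1} A.
\]
Proposition \ref{unit-iso} together with the universal property of localization then gives a natural bijection between $\Lt$-algebra maps $C^{k}_{t} \to R$ and elements of $\Iso_{\G_0[p^k]/}(R \otimes \G_0[p^k]\oplus\Lk^*, R \otimes \G[p^k])$, which is the representability asserted.

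To verify that $C^{k}_{t}$ is nonzero and that $\Lt/\mt \to C^{k}_{t}/\mt C^{k}_{t}$ is faithfully flat, I first note that $\E^0(B\Lk)$ is a free $\E^0$-module by iteration of Proposition \ref{wprep}, so $A$ is free over $\Lt$; hence $A/\mt A$ is free over $\Lt/\mt$ and its localization $C^{k}_{t}/\mt C^{k}_{t}$ is flat over $\Lt/\mt$. For faithful flatness I would check surjectivity of $\Spec(C^{k}_{t}/\mt C^{k}_{t}) \to \Spec(\Lt/\mt)$. Fix a prime $\mathfrak{p}$ of $\Lt/\mt$ and let $K$ be an algebraic closure of the residue field $k(\mathfrak{p})$, regarded as an $\Lt$-algebra through $\Lt \twoheadrightarrow \Lt/\mt \twoheadrightarrow k(\mathfrak{p}) \hookrightarrow K$. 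By Proposition \ref{etale-constant}, $K \otimes \G_{et} \cong (\QZ)^{n-t}$, so some isomorphism $\psi \co \Lk^* \to K \otimes \G_{et}[p^k]$ exists. Because $K$ has characteristic $p$, the connected formal part $K \otimes \G_0[p^k]$ has only the trivial $K$-point, and the short exact sequence shows $K \otimes \G[p^k](K) \to K \otimes \G_{et}[p^k](K)$ is an isomorphism of finite abelian groups, so $\psi$ lifts uniquely to $\tilde\psi \co \Lk^* \to K \otimes \G[p^k]$. By representability, $\tilde\psi$ corresponds to an $\Lt$-algebra map $C^{k}_{t} \to K$ whose composition with $\Lt \to C^{k}_{t}$ is the chosen $\Lt$-algebra structure on $K$. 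Hence $\mathfrak{p}$ lies in the image of $\Spec(C^{k}_{t}/\mt C^{k}_{t}) \to \Spec(\Lt/\mt)$, giving faithful flatness; in particular $C^{k}_{t}/\mt C^{k}_{t}$ (and therefore $C^{k}_{t}$) is nonzero.

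The main obstacle is mostly bookkeeping: matching ``isomorphism over $\G_0[p^k]$'' with ``isomorphism on $\G_{et}[p^k]$'' and translating the unit condition of Proposition \ref{unit-iso} into a localization. The faithful flatness step then falls out of Proposition \ref{etale-constant} together with the characteristic-$p$ vanishing of the formal part of $\G_0[p^k](K)$.
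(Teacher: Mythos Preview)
Your argument follows essentially the same route as the paper: define $C^{k}_{t}$ as the localization of $A = \Lt \otimes_{\E^0} \E^0(B\Lk)$ at the elements $\phi^{\mathrm{univ}}_y(\alpha)$ for $\alpha \neq 0$, use Proposition~\ref{unit-iso} to identify maps out of $C^{k}_{t}$ with the desired isomorphisms, and establish faithful flatness of $\Lt/\mt \to C^{k}_{t}/\mt$ by lifting each prime of $\Lt/\mt$ through an algebraically closed field via Proposition~\ref{etale-constant}.

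One technical point deserves care. Your ``short diagram chase on $R$-algebra points'' to pass from an isomorphism $\Lk^* \to R \otimes \G_{et}[p^k]$ to an isomorphism $R \otimes (\G_0[p^k]\oplus\Lk^*) \to R \otimes \G[p^k]$ is not quite adequate as stated: for a general $R$-algebra $S$ the sequence $0 \to \G_0[p^k](S) \to \G[p^k](S) \to \G_{et}[p^k](S)$ need not be exact on the right, so the naive $5$-lemma on $S$-points only yields injectivity. The paper handles this by applying the $5$-lemma after embedding the category of finite commutative group schemes into an abelian category (citing Oort); alternatively one can argue that a monomorphism between finite locally free group schemes of equal rank is an isomorphism, but some such device is needed.

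On the other hand, your lifting step in the faithful-flatness argument---observing that $\G_0[p^k](K) = 0$ in characteristic $p$, so that $\G[p^k](K) \to \G_{et}[p^k](K)$ is a bijection and $\psi$ lifts to a genuine point of the functor represented by $C^{k}_{t}$---makes explicit something the paper passes over in the phrase ``fixing an isomorphism provides a map $C^{k}_{t}/\mt \to K$.''
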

\begin{proof}
Let $S_k$ be the multiplicative subset of $\Lt \otimes_{\E^0} \E^0(B\Lk)$ generated by $\phi_y(\Lk^*)$ for the canonical map $\phi \co \Lk^* \lra{} (\Lt\otimes_{\E^0}\E^0(B\Lk))\otimes \G_{et}[p^k]$.  Let $C^{k}_{t} = S_{k}^{-1}(\Lt\otimes_{\E^0}\E^0(B\Lk))$. For an $\Lt$-algebra $R$, a map from $C^{k}_{t}$ to $R$ is a map $\Lk^* \lra{\phi} R \otimes \G[p^k]$ such that $\phi_y(\al)$ is a unit in $R$ for all $\al\neq 0 \in \Lk^*$. Postcomposition with the map $R\otimes \G[p^k] \lra{} R\otimes \G_{et}[p^k]$ gives a map
\[
\Lk^* \lra{\phi} R \otimes \G_{et}[p^k]
\]
that is an isomorphism by Proposition \ref{unit-iso}. Then
\[
\hom_{L_t}(\Lt\otimes_{\E^0} \E^0(B\Lk),R) \cong \hom_{gp-scheme}(\Lk^*,R\otimes \G[p^k])
\]
and
\[
\hom_{L_t}(C^{k}_{t},R) \cong \Iso_{\G_0[p^k]/}(R\otimes \G_{0}[p^k]\oplus\Lk^*,R\otimes \G[p^k]),
\]
the isomorphisms under $\G_0[p^k]$. The last isomorphism is due to the $5$-lemma applied to (see \cite{oort} for embedding categories of group schemes in abelian categories)
\[
\xymatrix{0 \ar[r] & R\otimes \G_{0}[p^k] \ar[r] \ar[d]^{=} & R\otimes \G_{0}[p^k]\oplus \Lk^* \ar[r] \ar[d] & \Lk^* \ar[r] \ar[d]^{\cong} & 0 \\
0 \ar[r] & R\otimes \G_{0}[p^k] \ar[r] & R\otimes \G[p^k] \ar[r] & R\otimes \G_{et}[p^k] \ar[r] & 0.}
\]
Thus over $C^{k}_{t}$ there is a canonical isomorphism $\G_0[p^k]\oplus \Lk^* \lra{} \G[p^k]$.

It is vital that we show that $C^{k}_{t}$ is nonzero. We will do this by showing that $\Lt/\mt \lra{i} C^{k}_{t}/\mt$ is faithfully flat and thus an injection. The map $i$ is flat because $(\Lt\otimes_{\E^0} \E^0(B\Lk))/\mt$ is a finite module over $\Lt/\mt$ and localization is flat. To prove that it is faithfully flat we use the same argument found in \cite{hkr}. Consider a prime $\p \subset \Lt/\mt$. Let $\Lt/\mt \lra{\theta} K$ be a map to an algebraically closed field with kernel exactly $\p$. This can be achieved by taking the algebraic closure of the fraction field of the integral domain $(\Lt/\mt)/\p$.

Prop \ref{etale-constant} implies that $\G_{et}[p^k](K) \cong \Lk^*$, fixing an isomorphism provides a map $C^{k}_{t}/\mt \lra{\Psi} K$ that extends $\theta$. We have 
\[
\xymatrix{C^{k}_{t}/\mt \ar[r]^{\Psi} & K \\ \Lt/\mt \ar[u] \ar[ur]_{\theta} &}
\]
and $\ker(\Psi)$ is a prime ideal of $C^{k}_{t}$ that restricts to (or is a lift of) $\p$. The map $i$ is a flat map that is surjective on $\Spec$. This implies that it is faithfully flat (see \cite{matsumura_algebra} 4.D).
\end{proof}

The localization in the above proposition can be applied to both sides of $\Lt\otimes_{\E^0} \E^0(B\rho_k)$ and the map is well-defined. Thus over the colimit $C_t = \Colim{k} \text{ } C^{k}_{t}$, following Prop \ref{bigring}, there is a canonical isomorphism $C_t\otimes\G \cong C_t\otimes(\G_0\oplus\QZ^{n-t})$.

It follows that there is a canonical map 
\[
i_k \co \E^0(B\Lk) \lra{} \Lt\otimes_{\E^0}\E^0(B\Lk) \lra{} C_t.
\]
\begin{cor}
The ring $C_t$ is the initial $\Lt$-algebra equipped with an isomorphism 
\[
C_t \otimes \G \cong \G_0 \oplus \QZ.
\]
\end{cor}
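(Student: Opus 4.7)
The plan is to reduce the universal property for $C_t$ to its finite-level analogues established in Proposition \ref{Ct} and then patch via the colimit. First I would observe that an isomorphism $R \otimes \G \cong R \otimes (\G_0 \oplus \QZ^{n-t})$ extending the canonical inclusion of $R \otimes \G_0$ is the same datum as a compatible family, indexed by $k$, of isomorphisms $R \otimes \G[p^k] \cong R \otimes (\G_0[p^k] \oplus \Lk^*)$ under $R \otimes \G_0[p^k]$. Here compatibility means commutation with the inclusions $i_k$ on the $\G$-side and with the evident inclusions $\G_0[p^k] \oplus \Lk^* \hookrightarrow \G_0[p^{k+1}] \oplus \Lambda_{k+1}^*$ on the other side. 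This is a standard fact about \pdiv groups: an isomorphism is determined and detected by its restrictions to each level of $p^k$-torsion.

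By Proposition \ref{Ct}, an isomorphism at level $k$ corresponds bijectively to an $\Lt$-algebra map $C^k_t \lra{} R$. To upgrade a compatible family to a single map out of $C_t = \Colim{k} C^k_t$, I would verify that the transition maps $C^k_t \lra{} C^{k+1}_t$ used to form the colimit (those induced by $\Lt \otimes_{\E^0} \E^0(B\rho_k)$ after localization) agree with the maps dictated by the level-raising structure on $\G$ and on $\G_0 \oplus \QZ^{n-t}$. This is exactly the content of the coherent system of generators $\{\beta^k_i\}$ fixed in Section \ref{splitting}: under the representability identification of Proposition \ref{representable}, these generators cause the inclusion $\Lk^* \hookrightarrow \Lambda_{k+1}^*$ to correspond to the map $x_i \mapsto [p](x_i)$, which is precisely how the transition maps of the colimit were defined. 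Conversely, an $\Lt$-algebra map $C_t \lra{} R$ restricts to maps $C^k_t \lra{} R$ for every $k$, hence by Proposition \ref{Ct} yields level-wise isomorphisms under $R \otimes \G_0[p^k]$; these automatically assemble into an isomorphism of \pdiv groups because the colimit transitions were built to enforce that compatibility.

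The main obstacle is the coherence check just sketched, namely confirming that the transition maps in the colimit defining $C_t$ really are the ones induced by the $p^k$-torsion inclusions on both $\G$ and $\G_0 \oplus \QZ^{n-t}$. Once that is in hand, the remainder is a formal application of the universal property of colimits: the two assignments $R \mapsto \{\text{compatible family of level-}k\text{ maps}\}$ and $R \mapsto \Hom_{\Lt}(C_t, R)$ are naturally isomorphic, exhibiting $C_t$ as the initial $\Lt$-algebra equipped with the desired isomorphism $C_t \otimes \G \cong \G_0 \oplus \QZ^{n-t}$ under $\G_0$.
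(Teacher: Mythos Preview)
Your proposal is correct and follows essentially the same route as the paper: reduce to the finite-level universal property of $C_t^k$ established in Proposition~\ref{Ct}, then pass to the colimit. You are in fact more explicit than the paper's proof about the compatibility of the maps $C_t^k \to R$ across levels and about both directions of the bijection; the paper's argument sketches only the existence of the level-$k$ factorization and leaves the coherence and uniqueness largely implicit.
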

\begin{proof}
Let $R$ be an $L_t$-algebra with a fixed isomorphism
\[
R\otimes \G[p^k] \cong R\otimes (\G_0[p^k] \oplus (\Zp{k})^{n-t}).
\]
By Lemma \ref{representable}, corresponding to the inclusion $R\otimes \Lk^* \lra{f} R\otimes \G[p^k]$ there is a map $\Lt\otimes_{\E^0}\E^0(B\Lk) \lra{} R$. 
Since $f$ induces an isomorphism onto the \'etale part of $R\otimes_{\Lt}\G[p^k]$, by Prop \ref{Ct} the map 
\[
\Lt\otimes_{\E^0}\E^0(B\Lk) \lra{} R
\]
factors through 
\[
\Lt\otimes_{\E^0}\E^0(B\Lk) \lra{} C_{t}^{k}.
\]
\end{proof}

\section{Transchromatic Generalized Character Maps}
We move on to defining the character map and we show that it induces an isomorphism over $C_t$. The point of the preceding discussion and the construction of $C_t$ is that we are going to use $C_t$ to construct a map of equivariant cohomology theories for every finite group $G$
\[
\Phi_G \co \E^*(EG\times_G X) \lra{} C_{t}^*(EG\times_G \Fix(X)).
\]
All cohomology theories are considered to be defined for finite $G$-spaces. The domain of $\Phi_G$ is Borel equivariant $\E$ and the codomain is Borel equivariant $C_t$ applied to $\Fix(X)$. The map is constructed in such a way that if $G \cong \Zp{k}$ the map of theories on a point is the global sections of the map on $p^k$-torsion $C_t\otimes(\G_{0}[p^k]\oplus(\Zp{k})^{n-t}) \lra{} \G[p^k]$.

The map $\Phi_G$ can be split into two parts, a topological part and an algebraic part. We will begin by describing the topological part. It is topological because it is induced by a map of topological spaces. After some preliminary discussion on the Borel construction and transport categories we will describe the map of topological spaces.

\subsection{The Topological Part}
Let $G$ be a finite group and $X$ a left $G$-space. Associated to $X$ as a topological space is a category that has objects the points of $X$ and only the identity morphisms (we remember the topology on the set of objects). We will abuse notation and use the symbol $X$ for the category and for the topological space. Including the action of $G$ on $X$ gives the transport category of $X$: $TX$. It is the category that has objects the points of $X$ and a morphism $g \co x_1 \lra{} x_2$ when $gx_1 = x_2$. This process associates to a group action on a topological space a category object in topological spaces.

Let $EG$ be the category with objects the elements of $G$ and a unique isomorphism between any two objects representing left multiplication in $G$. The geometric realization of the nerve of this groupoid is a model for the classical space $EG$, a contractible space with a free $G$-action. We will represent a morphism in $EG$ as $g_1 \lra{k} g_2$, where $kg_1 = g_2$. Note that this notation is overdetermined because $k = g_2g_{1}^{-1}$. For this reason we will sometimes just write $g_1 \lra{} g_2$ for the morphism.

The category $EG$ is monoidal with multiplication $m\co EG \times EG \lra{} EG$ using the group multiplication for objects and sending unique morphisms to unique morphisms. Explicitly: 
\[
m\co (g_1, h_1) \lra {} (g_2, h_2) \mapsto g_1h_1 \lra{} g_2h_2.
\]

The monoidal structure induces left and right $G$ actions on the category $EG$. Let $g_1 \lra{} g_2$ be a morphism in $EG$. Then for $g \in G$, the action is given by $g \cdot (g_1 \lra{} g_2) = gg_1 \lra{} gg_2$ and $(g_1 \lra{} g_2) \cdot g = (g_1g \lra{} g_2g)$.

\begin{prop}
As categories, $EG\times_G X \cong TX$ where the left $G$-action on the objects of $X$ is the $G$-action on the points of $X$. The geometric realization of the nerve of either of these categories is a model for the classical Borel construction.
\end{prop}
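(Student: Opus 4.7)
The plan is to construct an explicit isomorphism of categories $F\co EG\times_G X \to TX$ and then deduce the topological assertion by passing to nerves and geometric realizations. On objects, an element of $EG\times_G X$ is an equivalence class $[(g,x)]$ of pairs, under the relation induced by the diagonal $G$-action (right on $EG$, left on $X$), namely $(gh,x)\sim(g,hx)$. I would define $F([(g,x)]) = gx$, viewed as an object of $TX$. This is well-defined because $(gh)x = g(hx)$. It is a bijection on objects: $F(e,y)=y$ is surjective, and $g_1 x_1 = g_2 x_2$ forces $(g_1,x_1)\sim(e,g_1 x_1) = (e,g_2 x_2)\sim(g_2,x_2)$.

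Next I would define $F$ on morphisms. Since $X$ as a category has only identity morphisms, any morphism in $EG\times X$ has the form $(g_1,x)\to(g_2,x)$ with a unique label $k = g_2 g_1^{-1}\in G$. Send this to the morphism in $TX$ labelled by the same $k$ from $g_1 x$ to $g_2 x$, which is legitimate because $k\cdot g_1 x = g_2 x$. The assignment descends to the quotient because replacing $(g_1,x)\to(g_2,x)$ by its image $(g_1 h, h^{-1}x)\to(g_2 h, h^{-1}x)$ under the diagonal action preserves both the label $g_2 g_1^{-1}$ and the product $g_1 x = (g_1 h)(h^{-1}x)$. Functoriality is immediate from compatibility of composition with multiplication in $G$. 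For fully faithfulness, I would use the canonical representatives $[(e,y)]$: every morphism $[(e,y_1)]\to[(e,y_2)]$ lifts uniquely to a morphism $(e,y_1)\to(k,y_1)$ in $EG\times X$, and $(k,y_1)\sim(e,y_2)$ holds iff $ky_1=y_2$, which is exactly the datum of a morphism $y_1\to y_2$ in $TX$.

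Finally, for the topological claim, the nerve of $X$ (a discrete category) is the constant simplicial space $X$, the nerve of $EG$ realizes to a standard free contractible $G$-space, and the nerve functor preserves products. Since the $G$-action on $N(EG)$ is free at each simplicial level, geometric realization commutes with the quotient by $G$, and using that realization preserves finite products one obtains $|N(EG\times_G X)| \cong |N(EG)|\times_G X \cong EG\times_G X$ in the classical topological sense. The main obstacle is just the initial bookkeeping with action conventions to ensure that the left action on $X$ and the right action on $EG$ combine into the correct equivalence relation used in the fiber product; once that is pinned down, both the categorical isomorphism and the topological identification follow by direct unwinding of definitions.
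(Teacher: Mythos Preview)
Your proposal is correct and follows essentially the same approach as the paper: both construct the isomorphism by sending the class of a morphism $(g_1,x)\lra{(k,\Id_x)}(g_2,x)$ to $g_1x\lra{k}g_2x$ in $TX$, using the normalization $(g,x)\sim(e,gx)$. Your write-up is simply more detailed than the paper's one-line version, and you spell out the topological claim about nerves more carefully than the paper does.
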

\begin{proof}
We view $EG\times_G X$ as a quotient of the product category (in fact a coequalizer). We have 
\[
(g_1,x) \lra{(k,id_x)} (g_2,x) = (e, g_1 x) \lra{(k,id_x)} (e, g_2x) \mapsto (g_1x \lra{k} g_2x) \in \Mor(TX)
\]
which is clearly an isomorphism.
\end{proof}

The space $EG \times_G X$ has a left action by $G$ induced by the left action of $G$ on $EG$. This action can be uniquely extended to a left action of $EG$ as a monoidal category. This leads to

\begin{prop}
There is an isomorphism of categories $EG \times_{EG} (EG \times_G X) \cong EG\times_G X$.
\end{prop}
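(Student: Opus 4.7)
The plan is to identify $EG \times_{EG} (EG \times_G X)$ as a coequalizer of categories and then produce explicit mutually inverse functors. Write $Y := EG \times_G X$, which carries a left action $a\co EG \times Y \lra{} Y$ coming from the monoidal structure on $EG$ (as explained just before the statement). In the spirit of the coequalizer description used in the previous proposition, I regard $EG \times_{EG} Y$ as the coequalizer of the two functors
\[
EG \times EG \times Y \rightrightarrows EG \times Y,
\]
where one functor is $(h, g, y) \mapsto (hg, y)$ (using the monoidal multiplication $m$ on $EG$) and the other is $(h, g, y) \mapsto (h, g\cdot y)$ (using the left action $a$).

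First I would construct the action functor $F\co EG \times_{EG} Y \lra{} Y$ by descending $a$ through the coequalizer. The compatibility needed is that the two composites $EG \times EG \times Y \lra{} Y$ are equal, which is precisely the statement that $a$ is an action of the monoidal category $(EG, m)$, i.e.\ $(hg)\cdot y = h\cdot(g\cdot y)$. Both sides are literal equalities in this setting because the multiplication in $EG$ is the group multiplication in $G$ on objects and sends the unique morphism between two objects to the unique morphism between their products. So $F$ is well-defined on objects and on morphisms.

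Next I would construct an inverse $H\co Y \lra{} EG\times_{EG} Y$ by sending $y \mapsto [(e, y)]$ on objects and a morphism $(y_1 \lra{} y_2)$ to the class of $(\mathrm{id}_e, y_1 \lra{} y_2)$; this is functorial because $\mathrm{id}_e$ is a genuine identity morphism. Then $F\circ H = \Id_Y$ is immediate from $e\cdot y = y$. For $H \circ F$, the key point is that for any object $(g, y)$ of $EG \times Y$, the triple $(e, g, y)$ maps to $(e\cdot g, y) = (g, y)$ under the first coequalizer functor and to $(e, g\cdot y)$ under the second, so $[(g, y)] = [(e, g\cdot y)] = (H\circ F)([(g, y)])$ in $EG\times_{EG} Y$. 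The analogous identification for morphisms $g_1 \lra{k} g_2$ in $EG$ and a morphism in $Y$ is obtained the same way, applying the coequalizer relation to the triple with identity in the first slot.

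The main obstacle is simply keeping the coequalizer diagrams and the two actions (right action of $EG$ on itself via $m$, left action on $Y$ via $a$) straight; once the monoidal compatibility needed to define $F$ is checked, the rest is a formal verification that $F$ and $H$ are mutually inverse. No deeper input is needed beyond the fact that the unit $e \in EG$ acts as the identity and the monoidal multiplication is strictly associative, both of which are automatic from the definition of $EG$ as the indiscrete groupoid on $G$ with its group-induced monoidal structure.
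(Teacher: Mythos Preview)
Your argument is correct. You prove the general fact that for any category $Y$ with a (strict) action of the monoidal category $EG$, the action map descends to an isomorphism $EG\times_{EG}Y\cong Y$, using the unit section $y\mapsto[(e,y)]$ as inverse. The paper takes a slightly different, more structural route: it observes that the $EG$-action on $EG\times_G X$ is entirely through the left $EG$ factor, yielding an associativity isomorphism
\[
EG\times_{EG}(EG\times_G X)\;\cong\;(EG\times_{EG}EG)\times_G X,
\]
and then invokes $EG\times_{EG}EG\cong EG$. Your approach subsumes the paper's second step as the special case $Y=EG$, and avoids having to justify the associativity rearrangement; the paper's approach, on the other hand, isolates the purely monoidal content ($EG\times_{EG}EG\cong EG$) from the specific form of the action. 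Both yield the same explicit formulas the paper records afterward.
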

\begin{proof}
Since $EG$ acts on $(EG \times_G X)$ through the action on $EG$ we have the isomorphism
\[
EG \times_{EG} (EG \times_G X) \cong (EG \times_{EG} EG) \times_G X.
\]
Now the proposition follows from the fact that
\[
EG \times_{EG} EG \cong EG.
\]
We provide some explicit formulas. We will view $EG \times_G X$ as $TX$. The isomorphism in the proposition takes objects $(g,x) = (e, gx)$ to $gx$. On morphisms 
\[
((g_1,x_1) \lra{(k,h)} (g_2,x_2)) = ((e, g_1x_1) \lra{(1, g_2hg^{-1}_1)} (e,g_2x_2)) \mapsto (g_1x_1 \lra{g_2hg^{-1}_{1}} g_2x_2).
\]
\end{proof}

Let $X$ be a finite $G$-space. Fix a $k \geq 0$ so that every map $\al \co \Z_p^{n-t} \rightarrow G$ factors through $\Lambda_k = (\Z /p^k)^{n-t}$. Define \[
\Fix(X) = \Coprod{\al \in \hom(\Z_p^{n-t},G)}X^{\im \al}.
\]
\begin{lemma}
The space $\Fix(X)$ is a $G$-space.
\end{lemma}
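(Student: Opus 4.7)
The plan is to define the $G$-action on $\Fix(X)$ explicitly, then verify the axioms. Since $\Fix(X)$ is a coproduct indexed by $\hom(\Z_p^{n-t}, G)$, the first step is to define an action of $G$ on the indexing set. The natural choice is pointwise conjugation: for $g \in G$ and $\al \co \Z_p^{n-t} \to G$, set $(g \cdot \al)(v) = g\al(v)g^{-1}$. Associativity of conjugation and the fact that conjugation by $e$ is trivial show that this is a $G$-action on $\hom(\Z_p^{n-t}, G)$. Note that $\im(g \cdot \al) = g(\im \al)g^{-1}$.

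Next I would define the action on the summands. For $x \in X^{\im \al}$, I set $g \cdot x = gx$, viewed as an element of $X^{\im(g \cdot \al)}$; this requires verification that $gx$ is indeed fixed by $\im(g \cdot \al)$. If $h \in \im \al$, then $hx = x$ by hypothesis, so $(ghg^{-1})(gx) = g(hx) = gx$, showing that $gx$ is fixed by every element of $g(\im \al)g^{-1} = \im(g \cdot \al)$. Hence multiplication by $g$ gives a well-defined continuous map
\[
X^{\im \al} \lra{} X^{\im(g \cdot \al)},
\]
and assembling over all $\al$ yields a continuous map $g \cdot - \co \Fix(X) \to \Fix(X)$.

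Finally I would check the two action axioms. The identity $e \in G$ acts by $e \cdot \al = \al$ on the indexing set and by the identity on each $X^{\im \al}$, so $e \cdot - = \Id_{\Fix(X)}$. For $g_1, g_2 \in G$ and $x \in X^{\im \al}$, we have $(g_1 g_2) \cdot \al = g_1 \cdot (g_2 \cdot \al)$ from the action on $\hom(\Z_p^{n-t}, G)$, and $(g_1 g_2) x = g_1(g_2 x)$ inside $X$, so tracing through the identifications of the summands gives $(g_1 g_2) \cdot x = g_1 \cdot (g_2 \cdot x)$. Continuity is automatic because the action restricts to multiplication by $g$ on each open-and-closed summand, which is a homeomorphism of $X$ on underlying spaces. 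This is essentially a bookkeeping argument with no real obstacle; the only point requiring attention is confirming that $gx$ lands in the correct summand, which is exactly the calculation $\im(g \cdot \al) = g(\im \al) g^{-1}$.
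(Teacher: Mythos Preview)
Your proposal is correct and follows exactly the paper's approach: the paper's one-line proof simply observes that for $x \in X^{\im(\al)}$ and $g \in G$ one has $gx \in X^{\im(g\al g^{-1})}$, which is precisely the key calculation you carry out. You have just written out the bookkeeping (action axioms, continuity) that the paper leaves implicit.
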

\begin{proof}
Let $x \in X^{\im(\al)}$ then for $g \in G$, $gx \in X^{\im(g\al g^{-1})}$.
\end{proof}

Consider the inclusion
\[
X^{\im \al} \hookrightarrow X.
\]
Using $\al$ we may define 
\[
E\Lk \times_{\Lk} X^{\im \al} \rightarrow EG \times_G X.
\]
As the action of $\Lk$ on $X^{\im \al}$ through $G$ is trivial, $E\Lk \times_{\Lk} X^{\im \al} \cong B\Lk \times X^{\im \al}$. This provides us with a map 
\[
v \co B\Lk \times \Fix(X) \cong \Coprod{\al \in \hom(\Z_{p}^{n-t},G)} B\Lk \times X^{\im \al} \rightarrow EG \times_G X.
\]
The space 
\[
\Coprod{\al \in \hom(\Z_p^{n-t},G)} B\Lk \times X^{\im \al}
\]
is a $G$-space with the action of $G$ induced by the action on $\Fix X$ together with the trivial action on $B\Lk$. With this action the $G$-space is $G$-homeomorphic to $B\Lk \times \Fix X$.
\begin{lemma}
With the $G$-action on $B\Lk \times \Fix X$ described above and the $G$-action on $EG \times_G X$ from the left action of $G$ on $EG$, the map $v$ is a $G$-map.
\end{lemma}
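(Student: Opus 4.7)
The plan is to verify equivariance componentwise by combining the source decomposition
\[
B\Lk \times \Fix(X) \cong \Coprod{\al \in \hom(\Z_{p}^{n-t},G)} B\Lk \times X^{\im \al}
\]
with the identification $EG\times_G X \cong TX$ from the earlier proposition. First I would unpack $v$ on each summand: the restriction
\[
v_\al\co B\Lk \times X^{\im \al}\lra{} EG\times_G X
\]
is induced by $\al\co \Lk \lra{} G$ together with the $\al$-equivariant inclusion $X^{\im \al}\hookrightarrow X$, and, after transporting to $TX$, sends an object $(*,x)$ to the point $x$ and a morphism $(*,x)\lra{m}(*,x)$ labeled by $m\in\Lk$ to the morphism $x\lra{\al(m)}x$ (a valid morphism in $TX$ because $\al(m)\cdot x = x$ whenever $x\in X^{\im \al}$). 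This step just amounts to applying the explicit normal-form formula used in the proof of $EG\times_G X \cong TX$ to the image of the generating morphism.

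Next, for fixed $g\in G$, I would chase the two actions through both sides. On the source, $g$ acts trivially on $B\Lk$ and takes $X^{\im \al}$ to $X^{\im(g\al g^{-1})}$ by $x\mapsto gx$, so the morphism $(*,x)\lra{m}(*,x)$ in the $\al$-component is sent to $(*,gx)\lra{m}(*,gx)$ in the $g\al g^{-1}$-component; applying $v$ there yields
\[
gx\lra{(g\al g^{-1})(m)}gx = gx\lra{g\al(m)g^{-1}}gx
\]
in $TX$. On the target, the left $G$-action on $EG$ relabels morphisms by conjugation, $g\cdot(e\lra{k}e) = g\lra{gkg^{-1}}g$, and combined with the normal-form formula above this descends to the action on $TX$ sending $x\lra{n}x'$ to $gx\lra{gng^{-1}}gx'$. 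Applied to the image $x\lra{\al(m)}x$, this gives $gx\lra{g\al(m)g^{-1}}gx$, matching the source computation.

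I anticipate that the main obstacle is purely bookkeeping: keeping straight the normal-form representatives for classes in $EG\times_G X$, the convention that a morphism label in $EG$ acts by left multiplication, and how the left $G$-action on $EG$ interacts with the right-action quotient defining $EG\times_G X$. Once the dictionary is correctly set up, equivariance of $v$ boils down to the single identity $(g\al g^{-1})(m) = g\al(m)g^{-1}$, together with the observation that $g\cdot x = gx$ on objects matches the original $G$-action on $X^{\im(g\al g^{-1})} \subset X$.
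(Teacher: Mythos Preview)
Your proposal is correct and follows essentially the same approach as the paper: both verify on a generic morphism that the $G$-action commutes with $v$, and the computation reduces to the single identity $(g\al g^{-1})(m)=g\al(m)g^{-1}$. The paper's proof is just the one-diagram version of your argument, carried out directly in the model $EG\times_G X$ (writing the image of a morphism as $(e\lra{\al(l)}\al(l),\,x\in X)$) rather than via the transport category $TX$; one small notational slip in your write-up is that in $EG$ there is no morphism $e\lra{k}e$ unless $k=e$, so the intermediate line should read $g\cdot(e\lra{k}k)=(g\lra{gkg^{-1}}gk)$, though your stated conclusion about the induced action on $TX$ is correct.
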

\begin{proof}
Let $l \in \Lk$. The following diagram commmutes:
\[
\xymatrix{(e \lra{l} e, x \in X^{\im \al}) \ar[r]^{v} \ar[d]^{g} & (e \lra{\al(l)} \al(l), x \in X) \ar[d]^{g} \\
(e \lra{l} e, gx \in X^{\im g\al g^{-1}}) \ar[r]^{v} & (g \lra{g\al(l)g^{-1}} g\al(l), x \in X).}
\]
\end{proof}

\begin{prop}
\label{top-map}
The map 
\[
B\Lk \times \Fix(X) \rightarrow EG \times_G X
\] 
extends to a map 
\[
EG \times_G (B\Lk \times \Fix(X)) \rightarrow EG \times_G X.
\]
\end{prop}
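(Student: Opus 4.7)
The plan is to use functoriality of the Borel construction together with the identification $EG \times_{EG}(EG \times_G X) \cong EG \times_G X$ from the previous proposition.

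First, by the preceding lemma, $v \co B\Lk \times \Fix(X) \to EG \times_G X$ is a $G$-equivariant map, where $G$ acts on the target through the left action of $G$ on $EG$. Applying the functor $EG \times_G (-)$ produces a map
\[
EG \times_G (B\Lk \times \Fix(X)) \longrightarrow EG \times_G (EG \times_G X).
\]

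Second, the $G$-action on $EG \times_G X$ appearing on the right is the restriction, along the inclusion of $G$ as the identity morphisms in $EG$, of the monoidal left action of $EG$ on $EG \times_G X$. Since passing from the $G$-action to the $EG$-action imposes a coarser equivalence relation on $EG \times (EG \times_G X)$, there is a canonical quotient map
\[
EG \times_G (EG \times_G X) \longrightarrow EG \times_{EG}(EG \times_G X).
\]
The previous proposition identifies the target with $EG \times_G X$. Composing these three maps yields the desired extension. Concretely, on representative objects the composite sends a class $[g,(b,x)]$ to $g \cdot v(b,x)$, where $g\cdot$ denotes the left $EG$-action; this is manifestly well-defined on the $G$-quotient by the $G$-equivariance of $v$, and restricting to the subspace $\{e\} \times (B\Lk \times \Fix(X)) \hookrightarrow EG \times_G (B\Lk \times \Fix(X))$ recovers $v$, so the construction genuinely extends $v$.

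There is no serious obstacle here; the only point requiring care is bookkeeping of the three successive equivalence relations (the $G$-quotient defining $EG \times_G (B\Lk \times \Fix(X))$, the outer $G$-quotient in $EG\times_G(EG\times_G X)$, and the $EG$-quotient in $EG\times_{EG}(EG\times_G X)$). The explicit formula $[g,b,x] \mapsto g\cdot v(b,x)$ makes the compatibility transparent once one recalls that the $G$-action on $EG \times_G X$ is the restriction of the $EG$-action along $G \subset EG$.
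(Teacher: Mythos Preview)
Your argument is correct and is essentially identical to the paper's: both apply the functor $EG\times_G(-)$ to the $G$-map $v$, then pass along the quotient $EG\times_G(EG\times_G X)\to EG\times_{EG}(EG\times_G X)$ induced by $G\hookrightarrow EG$, and invoke the previous proposition to identify the target with $EG\times_G X$. The paper records the explicit formula on morphisms while you record it on objects, but the construction is the same.
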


\begin{proof}
We will use the categorical formulation developed above. Applying the functor $EG\times_G(-)$ gives the map
\[
EG\times_G (B\Lk \times \Fix(X)) \rightarrow EG\times_G (EG \times_G X).
\]
Now the inclusion $G\hookrightarrow EG$ induces
\[
EG\times_G (EG \times_G X) \lra{} EG\times_{EG} (EG \times_G X) \simeq EG \times_G X.
\]
The composite of the two maps is the required extension. Explicitly:
\[
((g_1,e) \lra{(k,a)} (g_2,e), x \in X^{\im\al}) \mapsto (g_1 \lra{g_2\al(a)g_{1}^{-1}} g_2\al(a), x \in X).
\]
\end{proof}

We can do some explicit computations of this map that will be useful later. Let $X = \ast$ and $G$ be a finite abelian group. Then we have that $EG \times_G X$ is $BG$ and
\[
EG \times_G \Big(\Coprod{\al \in \hom(\Z_p^{n-t},G)} B\Lk \times X^{\im \al} \Big) \cong \Coprod{\al \in \hom(\Z_p^{n-t},G)} BG \times B\Lk.
\]
For a given $\al$ we can compute explicitly the map defined in Prop \ref{top-map}.

\begin{cor}
\label{addition}
Let $X = \ast$, $G$ be an abelian group, and fix a map $\al \co \Lk \lra{} G$. Define $+_{\al} \co \Lk \times G \longrightarrow G$ to be the addition in $G$ through $\al$. Then the map of the previous proposition on the $\al$ component
\[
B\Lk \times BG \lra{} BG
\]
is just $B+_{\al}$.
\end{cor}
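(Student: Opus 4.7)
The plan is to simply unwind the explicit formula for the extension map constructed in the proof of Proposition \ref{top-map} under the simplifying hypotheses $X = \ast$ and $G$ abelian, and check that it matches $B(+_{\al})$ morphism by morphism.

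First, I would identify the source. When $X = \ast$, each fixed-point space $X^{\im \al}$ is just $\ast$, so $\Fix(\ast) \cong \Coprod{\al} \ast$ is the discrete $G$-set $\hom(\Lk,G)$. Because $G$ acts on $\Lk$ trivially, $G$ acts trivially on $B\Lk \times \Fix(\ast)$ (the action on $\hom(\Lk,G)$ by conjugation is trivial since $G$ is abelian), so
\[
EG \times_G (B\Lk \times \Fix(\ast)) \;\cong\; BG \times B\Lk \times \hom(\Lk,G) \;\cong\; \Coprod{\al \in \hom(\Lk,G)} BG \times B\Lk,
\]
and the $\al$-component is $BG \times B\Lk$ as claimed. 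Similarly, $EG \times_G \ast \cong BG$.

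Next I would translate a morphism $(g,a)$ of $BG \times B\Lk$ (where $g \in G$ and $a \in \Lk$ denote the unique nonidentity datum) into the canonical form used in the proof of Proposition \ref{top-map}. Under the isomorphism $EG \times_G Y \cong TY$, the morphism $(g,a)$ of $BG \times B\Lk$ corresponds to $((e,e) \lra{(g,a)} (g,e), \al) $ in the $\al$-component, where we have used the canonical representative with $g_1 = e$, $g_2 = g$. The formula from the proof of Proposition \ref{top-map} sends this to the morphism
\[
(e \lra{g\,\al(a)\,e^{-1}} g\,\al(a), \ast) \;=\; (e \lra{g\,\al(a)} g\,\al(a))
\]
in $TX$. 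Since $G$ is abelian, $g\,\al(a) = \al(a) + g$ in additive notation.

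Finally, translating back to $BG$ (one object, morphisms $=G$), this morphism is precisely the element $\al(a) + g$. Thus on the $\al$-component the map of Proposition \ref{top-map} takes the morphism $(a,g) \in \Lk \times G$ of $B\Lk \times BG$ to $\al(a) + g \in G$, which is exactly the map $B(+_\al)$. No obstacle is expected beyond carefully tracking conventions — the abelianness of $G$ is used exactly once, to commute $\al(a)$ past $g$, and without it the conjugation in the general formula would prevent the map from factoring as $B$ of a homomorphism.
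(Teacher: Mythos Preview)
Your proof is correct and follows the same approach as the paper: both unwind the explicit formula from Proposition~\ref{top-map} on a morphism $(g,a)$ after identifying $BG\times B\Lk$ with the $\al$-component of $EG\times_G(B\Lk\times\Fix(\ast))$, obtaining $g+\al(a)$. Your version is simply more detailed, spelling out the decomposition of the source and the role of abelianness, whereas the paper records only the two key display lines.
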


\begin{proof}
Under the isomorphism $BG\times B\Lk \cong EG\times_G B\Lk$
\[ 
(e,e) \lra{(g,a)} (e,e) \mapsto (e,e) \lra{(g,a)} (g, e).
\]
Prop \ref{top-map} implies that this maps to
\[
g+\al(a).
\]
\end{proof}

Next we compute the map with $X = G/H$ for $H$ an abelian subgroup of a finite group $G$. These computations will be used in our discussion of complex oriented descent.

When the notation $\Fix(X)$ may be unclear we will use $\Fix^G(X)$ to clarify that we are using $X$ as a $G$-space. We begin by analyzing $\Fix(G/H)$ as a $G$-set. 

\begin{prop}
\label{induction}
For $H \subseteq G$ abelian, $EG\times_G \Fix^G(G/H) \simeq EH\times_H \Fix^H(\ast)$.
\end{prop}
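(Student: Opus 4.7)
The plan is to produce a $G$-equivariant decomposition of $\Fix^G(G/H)$ as an induced $H$-space and then apply the standard equivalence for Borel constructions on induced spaces. Concretely, I would prove the proposition in three steps.

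First, I would unpack the $G$-set $\Fix^G(G/H)$. A point consists of a pair $(\beta, xH)$ with $\beta \in \hom(\Z_p^{n-t},G)$ and $\beta(a)xH = xH$ for all $a \in \Z_p^{n-t}$; the latter is equivalent to $x^{-1}\beta x$ landing in $H$, so it defines a homomorphism $\alpha \co \Z_p^{n-t} \to H$. The earlier lemma describes the $G$-action as $(\beta, xH) \mapsto (g\beta g^{-1}, gxH)$.

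Second, I would construct an explicit $G$-equivariant bijection
\[
\Psi \co \Fix^G(G/H) \xrightarrow{\cong} G \times_H \Fix^H(*),
\]
where $\Fix^H(*) = \hom(\Z_p^{n-t}, H)$ carries the conjugation action of $H$, and $G \times_H \Fix^H(*)$ is formed with respect to the left $G$-action on $G$. The map is $\Psi(\beta, xH) = [x,\, x^{-1}\beta x]$, with inverse $[x,\alpha] \mapsto (x\alpha x^{-1}, xH)$. The conjugation relation $[xh, h^{-1}\alpha h] = [x,\alpha]$ makes both assignments well-defined, and $G$-equivariance is immediate since $g\cdot[x,\alpha] = [gx,\alpha]$ and $(gx)^{-1}(g\beta g^{-1})(gx) = x^{-1}\beta x$.

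Third, I would apply the standard equivalence, for any $H$-space $Y$,
\[
EG \times_G (G \times_H Y) \cong EG \times_H Y \simeq EH \times_H Y,
\]
where the second equivalence uses that $EG$, restricted along $H \hookrightarrow G$, is a contractible free $H$-space and hence $H$-equivariantly equivalent to $EH$. Setting $Y = \Fix^H(*)$ and composing with the bijection $\Psi$ gives the desired homotopy equivalence. The only real work is the bookkeeping in step two---identifying a fixed point of $\im\beta$ on $G/H$ with (the conjugacy class of) a homomorphism into $H$---and once this is in place the rest of the argument is formal.
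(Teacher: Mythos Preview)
Your argument is correct, and it takes a different route from the paper's proof. The paper works directly at the level of transport groupoids: it shows that every object of $EG\times_G\Fix^G(G/H)$ is isomorphic to one of the form $eH\in (G/H)^{\im\al}$ with $\im\al\subseteq H$, and then computes $\Aut(eH)\cong H$ using that $H$ is abelian (so that conjugation by $h\in H$ fixes $\al$). This gives an essentially surjective, fully faithful functor to $EH\times_H\Fix^H(\ast)$.

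Your approach instead establishes an honest $G$-equivariant bijection $\Fix^G(G/H)\cong G\times_H\Fix^H(\ast)$ at the level of $G$-sets and then invokes the standard identity $EG\times_G(G\times_H Y)\simeq EH\times_H Y$. This cleanly separates the combinatorics (the bijection $\Psi$) from the formal homotopy theory (the Borel identity), and it has a bonus: nothing in your steps two and three uses that $H$ is abelian. The map $\Psi(\beta,xH)=[x,\,x^{-1}\beta x]$ is well-defined and $G$-equivariant for \emph{any} subgroup $H$, provided $\Fix^H(\ast)$ carries the conjugation $H$-action (which is how the paper defines it). So your argument actually proves the proposition without the abelian hypothesis; the paper's groupoid computation, by contrast, genuinely needs $H$ abelian at the point where it identifies $\Aut(eH)$. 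For the application in the paper only the abelian case matters, but it is worth noting that your method yields the stronger statement for free.
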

\begin{proof}
We will produce a map of groupoids that is essentially surjective and fully faithful. The groupoid $EG\times_G \Fix^G(G/H)$ has objects the elements of $\Fix^G(G/H)$ and morphisms coming from the action of $G$. The groupoid $EH\times_H \Fix^H(\ast)$ has objects the elements of $\Fix^H(\ast)$ and morphisms from the action of $H$.

Fix an $\al \co \Z_{p}^{n-t} \lra{} G$. For $(G/H)^{\im \al}$ to be non-empty we must have that $\im \al \subseteq g^{-1}Hg$ for some $g \in G$. Why? Let $a \in \im \al$ and assume that $gH$ is fixed by $a$, then $agH = gH$ so $g^{-1}ag \in H$. Thus for $gH$ to be fixed by all $a \in \im \al$, $\im \al$ must be contained in $g^{-1}Hg$.

Every object in $\Fix^G(G/H)$ is isomorphic to one of the form $eH$. Indeed, let $gH \in (G/H)^{\im \al}$ then $g^{-1} g H = eH \in (G/H)^{g^{-1}\im \al g}$. The only objects of the form $eH$ come from maps $\al$ such that $\im \al$ is contained in $H$. We have one connected component of the groupoid $\Fix^G(G/H)$ for every $\al \co \Z_{p}^{n-t} \lra{} H$.

Now to determine the groupoid up to equivalence it suffices to work out the automorphism group of $eH \in (G/H)^{\im \al}$. Clearly the only possibilities for $g \in G$ that fix $eH$ are the $g \in H$. It turns out that all of these fix $eH$. For if $g \in H$, $geH \in (G/H)^{g \im \al g^{-1}}$, but since $H$ is abelian this is just $(G/H)^{\im \al}$. So $\Aut(eH) \cong H$ for any $eH \in \Fix^G(G/H)$.

The equivalence is now clear. We can, for example, send $\ast \in \ast^{\im \al}$ to $eH \in (G/H)^{\im \al}$ for the same $\al$ as $\im \al \in H$. 
\end{proof}

\begin{prop}
\label{induction-diagram}
For $H \subseteq G$ abelian the following diagram commutes:
\[
\xymatrix{EH\times_H B\Lk \times \Fix^H(\ast) \ar[r] \ar[d]_{\simeq} & EH \times_H \ast \ar[d]^{\simeq} \\ 
			EG\times_G B\Lk \times \Fix^G(G/H) \ar[r]  & EG\times_G G/H }
\]
\end{prop}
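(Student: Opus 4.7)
The plan is to verify commutativity at the level of transport groupoids using the formula from Prop \ref{top-map} and the equivalence from Prop \ref{induction}. First I would unpack the vertical maps: the right vertical is $EH\times_H\ast\simeq BH\hookrightarrow EG\times_G G/H$, including $H\hookrightarrow G$ on morphisms and sending $\ast$ to $eH$, while the left vertical is the equivalence of Prop \ref{induction} extended across $B\Lk$ as the identity (legitimate since $G$ acts trivially on $B\Lk$), sending the component of $\Fix^H(\ast)$ indexed by $\al\co\Z_p^{n-t}\to H$ to $eH\in (G/H)^{\im\al}$, with $\al$ reinterpreted as a map into $G$ via $H\hookrightarrow G$.

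Next, I would chase a typical morphism
\[
\sigma=\big((h_1,e)\lra{(h,a)}(h_2,e),\ \ast\in\ast^{\im\al}\big)
\]
of $EH\times_H(B\Lk\times\Fix^H(\ast))$, with $\al\co\Z_p^{n-t}\to H$, around the square. The top-then-right composite applies Prop \ref{top-map} to produce $\big(h_1\lra{h_2\al(a)h_1^{-1}}h_2\al(a),\ \ast\big)$ in $EH\times_H\ast$, and then the right vertical yields $\big(h_1\lra{h_2\al(a)h_1^{-1}}h_2\al(a),\ eH\big)$ in $EG\times_G G/H$. The left-then-bottom composite applies the left vertical first to obtain $\big((h_1,e)\lra{(h,a)}(h_2,e),\ eH\in(G/H)^{\im\al}\big)$ in $EG\times_G(B\Lk\times\Fix^G(G/H))$, and then Prop \ref{top-map} sends this to the same $\big(h_1\lra{h_2\al(a)h_1^{-1}}h_2\al(a),\ eH\big)$, since $\al(a)$ takes the same value in $H\subseteq G$ on either side of the vertical equivalence.

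These outputs coincide, establishing commutativity on morphisms (and on objects by restricting to identities). The main obstacle, such as it is, is purely bookkeeping: tracking that the indexing $\al$ on each side of the vertical equivalence picks out the same element of $G$. This is where abelianness of $H$ enters, since it guarantees that the components of $\Fix^G(G/H)$ containing $eH$ are exactly those indexed by maps factoring through $H$, and the $G$-action restricted to these components agrees with the induced $H$-action, so no conjugation twist appears when passing from $\Fix^H(\ast)$ to $\Fix^G(G/H)$.
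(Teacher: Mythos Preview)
Your proof is correct and follows essentially the same approach as the paper: both verify commutativity by chasing a generic morphism $((h_1,e)\lra{(h,a)}(h_2,e),\ast\in\ast^{\im\al})$ around the square using the explicit formula from Prop~\ref{top-map}, arriving at $(h_1\lra{h_2\al(a)h_1^{-1}}h_2\al(a),eH)$ along either composite. Your additional remarks on why abelianness of $H$ is needed and how the indexing $\al$ is tracked are more explicit than the paper's account, but the underlying argument is identical.
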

\begin{proof}
We will represent a morphism in $EH\times_H B\Lk \times \Fix^H(\ast)$ as a pair $((h_1, e) \lra{(h,z)} (h_2, e), \ast \in \ast^{\im \al})$. Checking commutativity on morphisms suffices (checking on identity morphisms checks it on objects). Fix an $\al$ as above. We have the following diagram morphism-wise:
\[
\xymatrix{((h_1,e) \lra{(h,z)} (h_2,e), \ast \in \ast^{\im \al}) \ar[r] \ar[d] & (h_1 \lra{h_2 \al(z) h_{1}^{-1}} h_2 \al(z), \ast) \ar[d] \\
((h_1,e) \lra{(h,z)} (h_2,e), eH \in (G/H)^{\im \al}) \ar [r] & (h_1 \lra{h_2 \al(z) h_{1}^{-1}} h_2 \al(z), eH \in (G/H)).}
\] 
\end{proof}

The map $B\Lk \times EG \times_G \Fix(X) \simeq EG \times_G \coprod B\Lk \times X^{\im \al} \rightarrow EG \times_G X$ is the map of spaces that is used to define the first part of the character map. Applying $\E$ we get
\[
\E^*(EG\times_G X) \lra{} \E^*(B\Lk\times EG\times_G \Fix(X)).
\]

\subsection{The Algebraic Part}
The algebraic part of the character map begins with the codomain above. The description of this part of the character map is much simpler. However, we must begin with a word on gradings.

Until now we have done everything in the ungraded case. This is somewhat more familiar and it is a bit easier to think about the algebraic geometry in the ungraded situation. This turns out to be acceptable because $\E$ and $\LE$ are even periodic theories. We need two facts to continue.

\begin{prop}
Let $C_{t}^*$ be the graded ring with $C_t$ in even dimensions and the obvious multiplication. Then the fact that the ring extension $\E^0 \lra{} C_t$ is flat implies that the graded ring extension $\E^* \lra{} C_{t}^*$ is flat.
\end{prop}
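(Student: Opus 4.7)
The plan is to reduce the graded flatness statement to the ungraded one by exploiting the even periodicity of $\E$ and $\LE$. Since $\E$ is an even periodic cohomology theory, there exists a unit $u \in \E^{-2}$ that trivializes the periodicity, so that as graded rings
\[
\E^* \cong \E^0[u^{\pm 1}]
\]
with $u$ in degree $-2$ and $\E^{2k} = u^{-k}\cdot \E^0$. In particular, $\E^*$ is flat (in fact free) as a graded $\E^0$-module concentrated in even degrees.

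Next I would observe that, by construction, $C_t^*$ has $C_t$ in even degrees and $0$ in odd degrees, with the grading induced from $\E^*$. Concretely, the periodicity element $u \in \E^{-2}$ pushes forward to an invertible element in $C_t^{-2}$, so we obtain a canonical identification of graded $\E^*$-algebras
\[
C_t^* \;\cong\; C_t[u^{\pm 1}] \;\cong\; C_t \otimes_{\E^0} \E^*.
\]

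The conclusion then follows from the standard fact that flatness is preserved under base change: since $C_t$ is flat over $\E^0$, the graded $\E^*$-module $C_t \otimes_{\E^0} \E^*$ is flat over $\E^*$. Explicitly, for any graded $\E^*$-module $M$, one has $C_t^* \otimes_{\E^*} M \cong C_t \otimes_{\E^0} M$, and tensoring with the flat $\E^0$-module $C_t$ preserves short exact sequences of graded $\E^*$-modules (since such a sequence is in particular a short exact sequence of $\E^0$-modules via the inclusion $\E^0 \hookrightarrow \E^*$). There is really no obstacle here; the content of the proposition is just packaging the evident base change compatibility, and the only point that warrants verification is the identification of $C_t^*$ with $\E^* \otimes_{\E^0} C_t$ as graded $\E^*$-algebras, which follows immediately from even periodicity.
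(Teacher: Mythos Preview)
Your argument is correct and is essentially the same as the paper's: the paper observes that $C_t^*$ is the pushout of graded rings $\E^* \leftarrow \E^0 \rightarrow C_t$ (i.e.\ $C_t^* \cong \E^* \otimes_{\E^0} C_t$) and then invokes preservation of flatness under base change, which is exactly what you do, only you make the identification explicit via the periodicity element.
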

\begin{proof}
There is a pushout of graded rings
\[
\xymatrix{\E^0 \ar[r] \ar[d] & C_t \ar[d] \\ 
			\E^* \ar[r] & C_{t}^*,
} 
\]
where $\E^0$ and $C_t$ are taken to be trivially graded. As flatness is preserved under pushouts the proposition follows.
\end{proof}

\begin{prop}
The ring $\E^*(B\Lk)$ is even periodic.
\end{prop}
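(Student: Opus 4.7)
The plan is to reduce to the case of a single cyclic factor via the product decomposition $B\Lk \simeq (B\Z/p^k)^{n-t}$ and then bootstrap from results already established in the paper. Even periodicity of $\E^*(B\Lk)$ has two ingredients: vanishing in odd degrees and invertibility of the degree-$2$ periodicity element of $\E^*$ in $\E^*(B\Lk)$.

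First I would handle the cyclic case. Theorem \ref{groupcoh} together with even periodicity of $\E$ gives $\E^*(B\Z/p^k) \cong \E^*\powser{x}/([p^k](x))$ with $x$ placed in degree $-2$ (so that all terms of $[p^k](x)$ are homogeneous once the $u_i$ are in their standard even degrees). Proposition \ref{wprep} says this is a finitely generated free $\E^*$-module on the basis $\{1, x, \ldots, x^{p^{kn}-1}\}$, all of whose elements live in even degrees. Hence $\E^*(B\Z/p^k)$ is concentrated in even degrees, and it contains $\E^*$ as a direct summand, so the chosen periodicity unit $u \in \E_2^{\times}$ remains a unit in $\E^*(B\Z/p^k)$.

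Next I would pass to $B\Lk$ by iterated Künneth. Because $\E^*(B\Z/p^k)$ is a free $\E^*$-module of finite rank, the usual Künneth isomorphism for $\E$ gives
\[
\E^*(B\Lk) \;\cong\; \E^*(B\Z/p^k)^{\otimes_{\E^*}(n-t)},
\]
which is again finitely generated free over $\E^*$ and supported in even degrees. Equivalently, one can invoke the isomorphism $\E^0(B\Z/p^k)^{\otimes (n-t)} \cong \E^0(B\Lk)$ used just after Proposition \ref{wprep} and then lift to $\E^*$ via even periodicity of $\E$. Either way, $\E^*(B\Lk)$ vanishes in odd degrees, and the unit $u \in \E_2 \subset \E_2(B\Lk)$ is still invertible, completing the proof.

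There is really no obstacle here: the content is entirely packaged into Theorem \ref{groupcoh}, Proposition \ref{wprep}, and the even periodicity of $\E$ itself; the only bookkeeping is making sure that the free module structure is preserved under tensor products over $\E^*$ so that both the odd-degree vanishing and the invertibility of the periodicity element pass from $B\Z/p^k$ to $B\Lk$.
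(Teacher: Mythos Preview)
Your argument is correct and is essentially the paper's: the paper simply records that $\E^*(B\Lk)$ is a free $\E^*$-module with generators in degree $0$ (citing \cite{hkr}), and you have unpacked exactly this via the cyclic case plus K\"unneth. One minor slip: since $\E^0 \cong W(k)\powser{u_1,\ldots,u_{n-1}}$, the $u_i$ live in degree $0$ (not in BP-style degrees), so the natural placement of the coordinate is $|x|=0$ rather than $-2$; this does not affect your conclusion.
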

\begin{proof}
This is because $\E^*(B\Lk)$ is a free $\E^*$-module with generators in degree $0$ \cite{hkr}. Even more, the function spectrum $\E^{B\Lk}$ is a free $\E$-module as a spectrum.
\end{proof}
This is necessary to know because we will lift the map $\E^0(B\Lk) \lra{} C_t$ to a map of graded rings $\E^{*}(B\Lk) \lra{} C_{t}^*$. From now on we will use the notation $C_t$ for the ungraded ring, $C_t^*$ for the graded ring, and $C_t^*(-)$ for the cohomology theory defined by
\[
C_t^*(X) = C_t\otimes_{L_t}\LE^*(X) \cong C_t^*\otimes_{\LE^*}\LE^*(X).
\]

We return to the character map. A K\"unneth isomorphism available in this situation (\cite{hkr}, Corollary 5.10) gives
\[
\E^*(B\Lk\times EG\times_G \Fix(X)) \cong \E^*(B\Lk)\otimes_{\E^*} \E^*(EG\times_G \Fix(X))
\]
From Section \ref{splitting}, we have the maps
\[
i_k \co \E^*(B\Lk) \lra{} \Lt \otimes_{\E^0} \E^*(B\Lk) \lra{}  C_{t}^*.
\]
Also there is a map of cohomology theories $\E^*(-) \lra{} C_{t}^*(-)$ coming from the canonical map $\E \lra{} \LE$ and base extension and using the flatness of $C_t$ over $L_t$. Together these induce
\[
\E^*(B\Lk)\otimes_{\E^*} \E^*(EG\times_G \Fix(X)) \lra{} C_{t}^*(EG\times_G \Fix(X)).
\]
Precomposing with the topological map from the previous section we get the character map:
\[
\Phi_G \co \E^*(EG\times_G X) \lra{} C_{t}^*(EG\times_G \Fix(X)).
\]

It is a result of Kuhn's in \cite{Kuhn} that the codomain is in fact an equivariant cohomology theory on finite $G$-spaces. Several things must be proved to verify the claims in the statement of Theorem \ref{main}.

Recall that $\Lk$ is defined so that all maps $\Z_{p}^{n-t} \lra{}G$ factor through $\Lk$. First we show that this map does not depend on $k$.

\begin{prop} \label{independence}
The character map does not depend on the choice of $k$ in $\Lk$.
\end{prop}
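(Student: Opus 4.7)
The plan is to compare the character map defined using $\Lk$ against the one defined using $\Lambda_{k+1}$; since any two admissible choices of $k$ can be compared by passing to a common larger value, checking this single step suffices. Fix $k$ as in the statement and consider the projection $\rho_{k+1}\co \Lambda_{k+1}\twoheadrightarrow \Lk$ from the coherent system. Note first that $\Fix(X)$ is indexed by all of $\hom(\Z_p^{n-t},G)$ and hence does not depend on $k$ at all.

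For the topological factor, recall from the proof of Prop \ref{top-map} that if $\alpha\co \Z_p^{n-t}\to G$ factors as $\bar\alpha\co \Lk\to G$, then on the $\alpha$-component the map sends a morphism
$((g_1,e)\lra{(k,a)}(g_2,e),\ x\in X^{\im\alpha})$ to $(g_1\lra{g_2\bar\alpha(a)g_1^{-1}}g_2\bar\alpha(a),\ x)$. Using $\Lambda_{k+1}$ instead, the same $\alpha$ factors as $\bar\alpha\circ \rho_{k+1}$, and the corresponding output at $(g_1,e)\lra{(k,a')}(g_2,e)$ is $(g_1\lra{g_2\bar\alpha(\rho_{k+1}(a'))g_1^{-1}}g_2\bar\alpha(\rho_{k+1}(a')),\ x)$. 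Thus the square
$$
\xymatrix{EG\times_G(B\Lambda_{k+1}\times \Fix(X)) \ar[r] \ar[d]_{\Id\times B\rho_{k+1}\times \Id} & EG\times_G X \ar@{=}[d] \\
EG\times_G(B\Lk\times \Fix(X)) \ar[r] & EG\times_G X}
$$
commutes on the nose.

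For the algebraic factor, applying $\E^*$ and using the K\"unneth isomorphism from \cite{hkr}, Corollary 5.10, the independence of $\Phi_G$ reduces to checking that the triangle
$$
\xymatrix{\E^*(B\Lk) \ar[rr]^{i_k} \ar[dr]_{B\rho_{k+1}^*} & & C_t^* \\
& \E^*(B\Lambda_{k+1}) \ar[ur]_{i_{k+1}} &}
$$
commutes. But this is built into the definition of $C_t$: in Section \ref{splitting} the ring $C_t=\Colim{k}C_t^k$ was formed precisely along the transition maps induced by $\E^0(B\rho_k)=B\rho_k^*$, so $i_{k+1}\circ B\rho_{k+1}^*=i_k$ holds by construction. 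Composing the two commutative diagrams shows that the maps $\Phi_G$ defined via $\Lk$ and via $\Lambda_{k+1}$ coincide, completing the proof.

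The only substantive step is the topological commutativity; with the explicit categorical formula from Prop \ref{top-map} in hand it is a direct verification, and the algebraic compatibility is purely formal from the colimit definition of $C_t$.
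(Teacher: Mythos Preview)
Your proof is correct and follows essentially the same approach as the paper: split into a topological commutativity (the two maps $B\Lambda_{k+1}\times\Fix(X)\to EG\times_G X$ agree after composing with $B\rho_{k+1}$) and an algebraic compatibility ($i_{k+1}\circ\E^*(B\rho_{k+1})=i_k$, which is immediate from the colimit defining $C_t$), then compose. The only cosmetic differences are that the paper compares an arbitrary $j>k$ directly via $s=\rho_{k+1}\circ\cdots\circ\rho_j$ rather than reducing to the single step $k\mapsto k+1$, and that you supply the explicit verification of the topological square using the formula from Prop~\ref{top-map}, whereas the paper simply asserts it.
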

\begin{proof}
Let $j > k$ and let $s = \rho_{k+1} \circ \ldots \circ \rho_j$ where $\rho_i$ is the fixed epimorphism from Section \ref{splitting}. We will use this to produce an isomorphism between the character maps depending on $j$ and $k$. Precomposition with $s$ provides an isomorphism $\hom(\Lk,G) \cong \hom(\Lj,G)$. We can use $s$ to create a homeomorphism 
\[
EG\times_G \Big( \Coprod{\al \in \hom(\Lk,G)}X^{\im \al} \Big) \cong EG\times_G \Big( \Coprod{\al \in \hom(\Lj,G)}X^{\im \al}\Big).
\]
We will abuse notation and refer to each of these spaces as $\Fix(X)$ and the map between them as the identity map $\Id$. We begin by noting that the following two diagrams commute.
\[
\xymatrix{ & B\Lk \times EG \times_G \Fix(X) \ar[dl] &  \E^*(B\Lk) \ar[dd]_{\E^*(Bs)} \ar[dr]^{i_k} & \\ 
			EG\times_G X & & & C_t \\
			& B\Lj \times EG\times_G \Fix(X) \ar[uu]_{Bs\times \Id} \ar[ul] & \E^*(B\Lj) \ar[ur]_{i_j} & } 
\]
where the diagonal arrows in the left hand diagram come from the topological part of the character map and the diagonal arrows in the right hand diagram come from the definition of $C_t$. The right hand diagram commutes by definition.

Putting these diagrams together gives the commutative diagram
\[
\xymatrix{& \E^*(B\Lk) \otimes_{\E^*} \E^*(EG\times_G \Fix(X)) \ar[dr] \ar[dd] & \\
			\E^*(EG\times_G X) \ar[ur] \ar[dr] & & C_{t}^*(EG\times_G \Fix(X)) \\
			& \E^*(B\Lj) \otimes_{\E^*} \E^*(EG\times_G \Fix(X)) \ar[ur] &}
\]
that shows the map is independent of $k$.
\end{proof}

\begin{prop} 
For $G \cong \Zp{k}$ and $X = \ast$, the codomain of the character map is the global sections of $C_t\otimes \G[p^k] \cong C_t\otimes(\G_0[p^k] \oplus \Lk^*)$. 
\end{prop}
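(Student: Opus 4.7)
The plan is to unwind both sides explicitly and then invoke the isomorphism supplied by Proposition \ref{Ct}.

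I would first simplify the space $EG \times_G \Fix(*)$. Since $G = \Zp{k}$ is abelian, conjugation acts trivially on $\hom(\Z_p^{n-t}, G)$, and any such homomorphism factors uniquely through $\Lk = (\Zp{k})^{n-t}$. Using the fixed generator $\beta^k \co \Zp{k} \hookrightarrow S^1$, the set $\hom(\Lk,\Zp{k})$ is identified with $\Lk^* = \hom(\Lk, S^1)$. For $X = *$ every fixed-point space $*^{\im\al}$ is again a point, so $\Fix(*) \cong \Coprod{\al \in \Lk^*} *$ with trivial $G$-action, and therefore
\[
EG \times_G \Fix(*) \;\cong\; \Coprod{\al \in \Lk^*} B\Zp{k}.
\]

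Next I would compute $C_t^*(-)$ of this space in degree zero, turning the finite disjoint union into a product. By Theorem \ref{groupcoh} applied to the height $t$ theory $\LE$, one has $(\LE)^0(B\Zp{k}) \cong \Lt[x]/(g_k(x)) = \Sect_{\G_0[p^k]}$, so after flat base change along $\Lt \lra{} C_t$ the degree zero codomain becomes
\[
\Prod{\al \in \Lk^*} C_t \otimes_{\Lt} \Sect_{\G_0[p^k]} \;\cong\; \Prod{\al \in \Lk^*} \Sect_{C_t \otimes \G_0[p^k]}.
\]

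Finally, the right-hand side is the global sections of the scheme-theoretic product of $C_t \otimes \G_0[p^k]$ with the constant group scheme $\Lk^*$, which is by definition $\Sect_{C_t \otimes (\G_0[p^k] \oplus \Lk^*)}$; and Proposition \ref{Ct} canonically identifies this with $\Sect_{C_t \otimes \G[p^k]}$. Even periodicity of $\E$ and $\LE$ then propagates the degree zero identification to the full graded codomain. The main obstacle is purely bookkeeping: keeping straight the identification of $\hom(\Z_p^{n-t},G)$ indexing the components of $\Fix(*)$ with the dual group $\Lk^*$ indexing the \'etale summand of $C_t \otimes \G[p^k]$, and confirming that a product of affine schemes over $\Spec(C_t)$ corresponds to the tensor product of their $C_t$-valued global sections. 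All of the substantive content has already been supplied by Section \ref{splitting}.
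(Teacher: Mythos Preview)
Your proposal is correct and follows essentially the same approach as the paper's proof: identify $EG\times_G\Fix(*)$ with $\Coprod{\Lk^*} B\Zp{k}$ using abelianness and the fixed generator $\beta^k$, then compute the degree zero codomain as $\Prod{\Lk^*} C_t^0(B\Zp{k})$ and recognize this as the global sections of $C_t\otimes(\G_0[p^k]\oplus\Lk^*)$. Your write-up is slightly more explicit than the paper's in spelling out $(\LE)^0(B\Zp{k})\cong\Sect_{\G_0[p^k]}$ and in invoking Proposition~\ref{Ct} for the identification with $C_t\otimes\G[p^k]$, but the substance is the same.
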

\begin{proof}
Let $G \cong \Zp{k}$ and $X = \ast$, as $G$ is abelian it acts on $\Fix(X)$ component-wise. As $X = \ast$, 
\begin{align*}
EG \times_G \Fix(X) &= EG \times_G \Big(\Coprod{\al \in \hom(\Z_p^{n-t},G)} \ast^{\im \al}\Big) \\
							&\cong \Coprod{\hom(\Z_{p}^{n-t},G)} BG.
\end{align*}
Applying cohomology and using $\beta^k \in (\Zp{k})^* = G^*$ to identify $\Hom(\Z_{p}^{n-t},G)$ and $\Lk^*$ gives
\begin{align*}
C_{t}^0\Big(\Coprod{\hom(\Z_{p}^{n-t},G)} BG\Big) &\cong \Prod{\hom(\Z_{p}^{n-t},G)}C_{t}^0(BG) \\
								&\cong \Prod{\Lk^*}C_{t}^0(BG).
\end{align*}
The spectrum of this ring is precisely $\G_0[p^{k}]\oplus\Lk^*$.
\end{proof}

The next step is to compute the character map on cyclic $p$-groups. We begin by giving an explicit description, with the coordinate, of the global sections of the canonical map $C_t\otimes(\G_0[p^k]\oplus \Lk^*) \lra{} \G_{\E}[p^k]$. We describe the map from each summand of the domain separately. 

The global sections of the map $C_t \otimes \G_0[p^k] \lra{} \G_{\E}[p^k]$ are clearly given by 
\[
\E^0\powser{x}/([p^k](x)) \lra{x \mapsto x} C_t\powser{x}/([p^k](x)).
\]

Next we analyze the other summand. Consider the canonical map $\psi \co \Lk^* \lra{} \G_{\E}[p^k]$, which is the composition
\[
\Lk^* \lra{} C_{t} \otimes \G_{\E}[p^k] \lra{} \G_{\E}[p^k].
\]
The global sections of $\psi$ are easy to describe in terms of the coordinate. Recall that 
\[
\Lt \otimes_{\E^0} \E^0(B\Lk) \cong \Lt \otimes_{\E^0}E_{n}^0\powser{x_1,\ldots,x_{n-t}}/([p^k](x_1),\ldots,[p^k](x_{n-t}))
\]
and that there is a canonical map
\[
i_k\co \Lt \otimes_{\E^0} \E^0(B\Lk) \lra{} C_t.
\]
Using the notation of Section \ref{splitting}, let $l = c_1\cdot\beta_{1}^{k} + \ldots + c_{n-t}\cdot \beta_{n-t}^{k} \in \Lk^*$. The global sections of the restriction of $\psi$ to $l$ are the map
\[
\E^0\powser{x}/([p^k](x)) \lra{} C_t,
\]
which sends 
\[
x \mapsto i_k([c_1](x_1) +_{\G_{\E}} \ldots +_{\G_{\E}} [c_{n-t}](x_{n-t})).
\]
In order to ease the notation, define
\[
\psi_x(l) = i_k([c_1](x_1) +_{\G_{\E}} \ldots +_{\G_{\E}} [c_{n-t}](x_{n-t})).
\]
Putting these maps together for all $l \in \Lk^*$ gives
\[
E_{n}^0\powser{x}/([p^k](x)) \lra{} C_t\powser{x}/([p^k](x))\otimes_{C_t} \big( \Prod{\Lk^*}C_{t}\big) \cong \Prod{\Lk^*} C_t\powser{x}/([p^k](x)),
\]
which maps
\[
x \mapsto x +_{\G_{\E}} (\psi_x(l))_{l \in \Lk^*} \mapsto (x+_{\G_{\E}}\psi_x(l))_{l \in \Lk^*}.
\]

\begin{prop}
\label{abmap}
For $G \cong \Zp{k}$ and $X = \ast$, the character map is the global sections of $\G_0[p^k]\oplus\Lk^* \lra{} \G_{E_n}[p^k]$ described above.
\end{prop}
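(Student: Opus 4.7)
The plan is to compute $\Phi_G$ and the global sections of the algebraic map side-by-side on each $\al$-component and observe that the formulas agree.

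With $G = \Zp{k}$ and $X = \ast$, every fixed-point set $\ast^{\im \al}$ equals $\ast$, so $EG \times_G \Fix(\ast) \cong \Coprod{\al \in \hom(\Lk,G)} BG$. The K\"unneth isomorphism then decomposes the topological part of $\Phi_G$ into a product, indexed by $\al$, of maps $\E^*(BG) \lra{} \E^*(B\Lk) \otimes_{\E^*} \E^*(BG)$. By Corollary \ref{addition}, the $\al$-component is induced by the H-map $B(+_{\al}) \co B\Lk \times BG \lra{} BG$.

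The key step is the evaluation of $B(+_{\al})^*$ on the generator $x \in \E^0(BG) \cong \E^0\powser{x}/([p^k](x))$. Since $+_{\al} \co \Lk \times G \lra{} G$ is a homomorphism of abelian groups, the character $\beta^k \circ (+_{\al}) \co \Lk \times G \lra{} S^1$ is the sum, in $(\Lk \times G)^*$, of the two characters $\beta^k \circ \al$ and $\beta^k$. Writing $\beta^k \circ \al = c_1 \beta_1^k + \ldots + c_{n-t}\beta_{n-t}^k$ in $\Lk^*$, Theorem \ref{groupcoh} and the definition of the formal group law together give
\[
B(+_{\al})^*(x) = [c_1](x_1) +_{\G_{\E}} \ldots +_{\G_{\E}} [c_{n-t}](x_{n-t}) +_{\G_{\E}} x
\]
in $\E^0(B\Lk) \otimes_{\E^0} \E^0(BG)$. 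Applying $i_k$ on the first factor and the localization $\E \lra{} \LE$ on the second, the image in $C_t\powser{x}/([p^k](x))$ becomes $\psi_x(l) +_{\G_{\E}} x$, where $l = \al^*(\beta^k) \in \Lk^*$.

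Using $\beta^k$ to identify $\hom(\Lk, G)$ with $\Lk^*$ and assembling over all $\al$, the character map $\Phi_G$ sends $x$ to the tuple $(\psi_x(l) +_{\G_{\E}} x)_{l \in \Lk^*}$, which is exactly the formula for the global sections of $\G_0[p^k] \oplus \Lk^* \lra{} \G_{\E}[p^k]$ derived just before the proposition. The principal obstacle is the explicit computation of $B(+_{\al})^*$; this reduces cleanly to the universal situation over $BS^1$ because $+_{\al}$ is a homomorphism, so the pullback of the Euler class of a sum of characters is the formal sum of the pullbacks. Once that formula is in hand, the remainder of the argument is a direct comparison of formulas.
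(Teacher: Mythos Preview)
Your proof is correct and follows essentially the same approach as the paper: both fix an $\al$, invoke Corollary~\ref{addition} to identify the topological map as $B(+_{\al})$, compute its effect on the coordinate via the formal group law to obtain $[c_1](x_1) +_{\G_{\E}} \ldots +_{\G_{\E}} [c_{n-t}](x_{n-t}) +_{\G_{\E}} x$, apply the algebraic part $i_k$, and assemble over all $\al$. Your explanation of why $B(+_{\al})^*$ sends $x$ to the formal sum (by pulling back the character $\beta^k$ along the homomorphism $+_{\al}$) is slightly more detailed than the paper's, but the argument is the same.
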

\begin{proof}
Fix an $\al \co \Lk \lra{} G$. Postcomposing with our fixed generator of $G^* = (\Zp{k})^*$ we get an element $c_1\cdot\beta_{1}^{k} + \ldots + c_{n-t}\cdot \beta_{n-t}^{k} \in \Lk^*$. By Prop \ref{addition} the topological part of the character map is induced by $B(-)$ of the addition map $\Lk\times G \lra{+_{\al}} G$. Applying $\E^0$ to $B+_{\al}$ gives
\[
E_{n}^0\powser{x}/([p^k](x)) \lra{} E_{n}^0\powser{x_1,\ldots,x_{n-t}}/([p^k](x_1),\ldots,[p^k](x_{n-t})) \otimes_{\E^0} E_{n}^0\powser{x}/([p^k](x)),
\]
the map sending 
\[
x \mapsto [c_1](x_1) +_{\G_{\E}} \ldots +_{\G_{\E}} [c_{n-t}](x_{n-t}) +_{\G_{\E}} x.
\]
This maps via the algebraic part of the character map,
\[
E_{n}^0\powser{x_1,\ldots,x_{n-t}}/([p^k](x_1),\ldots,[p^k](x_{n-t})) \otimes_{\E^0} E_{n}^0\powser{x}/([p^k](x)) \lra{} C_t\powser{x}/([p^k](x)),
\]
to $(x+_{\G_{\E}}\psi_x(\al))$, where $\psi_x$ is as above. Putting these together for all $\al$ gives a map
\[
E_{n}^0\powser{x}/([p^k](x)) \lra{} \Prod{\Lk^*}C_t\powser{x}/([p^k](x)).
\]
This is precisely the map constructed just prior to the proposition.
\end{proof}

\subsection{The Isomorphism}
We prove that the map of cohomology theories defined above
\[
\Phi_G \co \E^*(EG\times_G X) \lra{} C_{t}^*(EG\times_G \Fix(X)).
\]
is in fact an isomorphism when the domain is tensored up to $C_t$. We follow the steps outlined in \cite{hkr} with some added complications. 

Given a finite $G$-CW complex $X$, let $G \hookrightarrow U(m)$ be a faithful complex representation of $G$. Let $T$ be a maximal torus in $U(m)$. Then $F = U(m)/T$ is a finite $G$-space with abelian stabilizers. This means that it has fixed points for every abelian subgroup of $G$ but no fixed points for non-abelian subgroups of $G$. 

The isomorphism is proved by reducing to the case when $X$ is a point and $G$ is abelian. We first show that the cohomology of $X$ is determined by the cohomology of the spaces $X\times F^{\times h}$. Thus we can reduce to proving the isomorphism for spaces with abelian stabilizers. Using Mayer-Vietoris for the cohomology theories we can then reduce to spaces of the form $G/H \times D^l \simeq G/H$ where $H$ is abelian. Then the fact that 
\[
EG\times_G (G/H) \simeq BH
\]
and Prop \ref{induction} imply that we only need to check the isomorphism on finite abelian groups. This will follow from our previous work. 

We begin by proving the descent property for finite $G$-CW complexes. Thus we assume that the map is an isomorphism for spaces with abelian stabilizers and show that this implies it is an isomorphism for all finite $G$-spaces. 

\begin{prop}
The space $F$ is a space with abelian stabilizers.
\end{prop}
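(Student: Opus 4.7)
The plan is to compute the stabilizer of an arbitrary point of $F = U(m)/T$ directly and observe that it sits inside a torus, hence must be abelian.

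Given $gT \in F$ and $h \in U(m)$, we have $h \cdot gT = gT$ if and only if $g^{-1}hg \in T$, i.e.\ if and only if $h \in gTg^{-1}$. So the stabilizer in $U(m)$ of $gT$ is the conjugate maximal torus $gTg^{-1}$. Since $G$ acts on $F$ through the fixed embedding $G \hookrightarrow U(m)$, the stabilizer of $gT$ in $G$ is
\[
\mathrm{Stab}_G(gT) = G \cap gTg^{-1}.
\]
This is a subgroup of the abelian group $gTg^{-1}$, and hence is itself abelian. Since $gT$ was arbitrary, every stabilizer of the $G$-action on $F$ is abelian, which is exactly the required property.

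There is no real obstacle here; the only point worth emphasizing is that the definition of ``space with abelian stabilizers'' used in the paper (admitting fixed points for every abelian subgroup and none for non-abelian ones) needs the complementary observation that for an abelian subgroup $A \subseteq G$, the fixed set $F^A$ is non-empty. This follows from the classical fact that any compact abelian subgroup of $U(m)$ is contained in some maximal torus: choose $g \in U(m)$ with $A \subseteq gTg^{-1}$, and then $gT \in F^A$. Combined with the stabilizer computation above, this gives the proposition.
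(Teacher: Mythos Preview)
Your proof is correct and takes essentially the same approach as the paper: both arguments rest on the observation that the $U(m)$-stabilizer of $gT$ is the conjugate torus $gTg^{-1}$, so $G$-stabilizers are abelian, together with the fact that a commuting family of unitary matrices can be simultaneously diagonalized, so any abelian $A \subseteq G$ lies in some $gTg^{-1}$ and fixes $gT$. The paper phrases this as two separate fixed-point checks (abelian subgroups have fixed points, nonabelian ones do not) rather than via an explicit stabilizer computation, but the content is identical.
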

\begin{proof}
Let $A \subseteq G$ be an abelian subgroup. Then under the faithful representation above $A \subset uTu^{-1}$ for some $u \in U(m)$. Thus for $a \in A$, $a = utu^{-1}$ for some $t \in T$ and now it is clear that $A$ fixes the coset $uT$.

On the other hand, if $H \subseteq G$ is nonabelian then it cannot be contained inside a maximal torus because the representation is faithful. Therefore it will not fix any coset of the form $uT$ with $u \in U(m)$.
\end{proof}

\begin{prop}
As $F$ is a space with abelian stabilizers the realization of the simplicial space where the arrows are just the projections
\[
EF = \xymatrix{
  {\Big{|} F} &
  {F \times F} \ar@<1ex>[l] \ar@<-1ex>[l] &
  {F \times F \times F \ldots \Big{|}} \ar@<2ex>[l] \ar[l] \ar@<-2ex>[l] 
}
\]
is a space such that for $H \subseteq G$
\begin{equation*}
EF^H \simeq \left\{
\begin{array}{rl}
\emptyset & \text{if } H \text{ not abelian}\\
\ast & \text{if } H \text{ is abelian}\\
\end{array} \right.
\end{equation*}
\end{prop}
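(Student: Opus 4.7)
The plan is to reduce the computation to a statement about $F^H$ by interchanging $H$-fixed points with geometric realization, and then apply an extra degeneracy argument.

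First I would observe that the functor $(-)^H$ of $H$-fixed points commutes with finite products (trivially), and for a ``good'' simplicial space (one in which all degeneracies are closed cofibrations) it commutes with geometric realization. Since $F = U(m)/T$ is a smooth compact manifold on which $G$ acts by diffeomorphisms, the face maps and the diagonal degeneracies in the simplicial space $F^{\bullet+1}$ are smooth maps between products of $F$, so this compatibility applies. Hence $(EF)^H$ is homeomorphic to the realization of the simplicial space whose space of $n$-simplices is $(F^H)^{\times(n+1)}$, with projection face maps and diagonal degeneracies.

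Next I split into the two cases handed to us by the previous proposition. If $H$ is nonabelian, then $F^H = \emptyset$, so every simplicial degree is empty and the realization is empty. If $H$ is abelian, then $F^H \neq \emptyset$, so I choose a basepoint $f_0 \in F^H$ and define extra degeneracies $s_{-1}\co (F^H)^{\times(n+1)} \longrightarrow (F^H)^{\times(n+2)}$ by $s_{-1}(f_1,\ldots,f_{n+1}) = (f_0,f_1,\ldots,f_{n+1})$. These are compatible with the projection face maps and the diagonal degeneracies, so the standard extra-degeneracy argument produces a simplicial contraction of the augmented simplicial space $(F^H)^{\bullet+1} \to \ast$ onto $f_0$. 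This yields $(EF)^H \simeq \ast$.

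The step I expect to require the most care is the compatibility of $(-)^H$ with geometric realization: one must confirm that the simplicial space in question is proper, i.e., that the degeneracies are closed cofibrations, so that $H$-fixed points (which commute with finite limits) also commute with the pushouts along skeletal inclusions that build the realization. Since $F$ is a smooth compact manifold on which $G$ acts by diffeomorphisms, the diagonal degeneracies are inclusions of smooth closed submanifolds and this cofibrancy is standard; the rest of the argument is then a short combinatorial matter.
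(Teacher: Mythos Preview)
Your proposal is correct and follows essentially the same route as the paper: pass $H$-fixed points inside the realization to reduce to $E(F^H)$, then observe it is empty when $F^H=\emptyset$ and use an extra-degeneracy contraction when $F^H\neq\emptyset$. The one difference worth noting is that the paper bypasses your properness concern entirely by invoking the general fact that geometric realization (in a convenient category of spaces) commutes with finite limits, and hence with $(-)^H$ for finite $H$; no Reedy cofibrancy check is needed.
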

\begin{proof}
Because realization commutes with finite limits we just need to check that for $F$ a non-empty space, $EF$ is contractible. It is well known that in this situation there are backward and forward contracting homotopies to a point (see \cite{dugger_colimits}, Example 3.14).
\end{proof}

Now $EG \times_G X \simeq EG \times_G (X \times EF)$ and exchanging homotopy colimits gives

\[
EG\times_G X \simeq \xymatrix{
  {\Big{|} EG\times_G (X\times F)} &
  {EG\times_G (X \times F \times F)} \ar@<1ex>[l] \ar@<-1ex>[l] &
  {\ldots \Big{|}} \ar@<2ex>[l] \ar[l] \ar@<-2ex>[l] 
}
\]

It is important to know that $\Fix(-)$ preserves realizations.

\begin{prop}
The functor $\Fix(-)$ preserves realizations. That is, given a simplicial $G$-space $X_{\bullet}$, $\Fix(|X_{\bullet}|) \simeq |\Fix(X_{\bullet})|$.
\end{prop}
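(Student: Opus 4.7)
The functor $\Fix$ decomposes as the composite of two simpler functors on $G$-spaces: for each homomorphism $\al \co \Z_p^{n-t} \to G$ we form the fixed-point subspace $X^{\im \al}$, and then we take the coproduct of these over the (possibly infinite, but discrete) set $\hom(\Z_p^{n-t}, G)$. The plan is to verify separately that each of these two operations commutes with geometric realization of simplicial $G$-spaces, and then compose the resulting equivalences.

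The coproduct step is essentially formal: geometric realization is itself a colimit (a coend against $\Delta^{\bullet}$), and colimits commute with colimits, so $|\Coprod{\al} X_\bullet^{\im \al}| \cong \Coprod{\al} |X_\bullet^{\im \al}|$ on the nose.

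For the fixed-point step, fix $\al$ and let $H = \im \al$. Since $G$ is finite, so is $H$. The fixed-point space $X^H$ is a finite limit: concretely, it is the equalizer of the identity map and the product of the $|H|$ action maps $h \cdot (-) \co X \to X$ ranging over $h \in H$. The classical result of May and Segal then says that geometric realization of simplicial spaces preserves finite limits, provided one works in a convenient category of topological spaces (e.g.\ compactly generated weak Hausdorff) and the simplicial spaces in question are sufficiently cofibrant (proper, or Reedy cofibrant — which is automatic for a simplicial $G$-CW object such as $X_\bullet$). Applying this result to the levelwise equalizer defining $X_\bullet^H$ gives $|X_\bullet^H| \simeq |X_\bullet|^H$.

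The main technical point is precisely this last commutation of realization with finite limits, which is where the cofibrancy hypothesis must be checked; everything else is formal manipulation with colimits. Once this point is granted, combining the two equivalences produces the desired homeomorphism $\Fix(|X_\bullet|) \simeq |\Fix(X_\bullet)|$.
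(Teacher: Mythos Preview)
Your proposal is correct and follows essentially the same approach as the paper: decompose $\Fix$ into a coproduct followed by fixed-point functors, use that colimits commute with colimits for the coproduct step, and invoke the classical fact that geometric realization commutes with fixed points for finite group actions for the other step. The paper cites Gabriel--Zisman and May directly for this last fact rather than phrasing it via finite-limit preservation, and it does not raise the cofibrancy issue (in compactly generated spaces realization preserves finite limits on the nose, so no Reedy hypothesis is actually needed here).
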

\begin{proof}
Recall that for a $G$-space $X$, $\Fix(X) = \Coprod{\al \in \hom(\Z^{n-t}_{p},G)} X^{\im \al}$.

Also recall that geometric realization as a functor from simplicial $G$-spaces to $G$-spaces is a colimit (in fact a coend), it commutes with fixed points for finite group actions (see \cite{GabrielandZisman}, Chapter 3 or \cite{may_loop_spaces}, Chapter 11), and the following diagram commutes:
\[
\xymatrix{\text{G-Spaces}^{\Delta^{op}} \ar[r]^(.55){|\text{  } |} \ar[d] & \text{G-Spaces} \ar[d]\\
          \text{Spaces}^{\Delta^{op}} \ar[r]^(.55){|\text{  }|} & \text{Spaces}}
\]
where the vertical arrows are the forgetful functor. Thus it suffices to check that $\Fix(-)$ commutes with the realization of simplicial spaces as we already know that it lands in $G$-spaces.

As colimits commute with colimits we only need to check the fixed points.
\end{proof}

We will use the Bousfield-Kan spectral sequence. For a cosimplicial spectrum $S^{\bullet}$ it is a spectral sequence 
\[
E_{2}^{s,t} = \pi^s \pi_t S^{\bullet} \Rightarrow \pi_{t-s} \Tot S^{\bullet} 
\]

As $\Sigma^{\infty}_{+} \co \Top \lra{} \Spectra$ is a left adjoint it commutes with colimits. It is also strong monoidal, taking products to smash products. The facts together imply that it preserves realizations. Let $E$ be a cohomology theory, then $\hom(|\Sigma^{\infty}_{+}X_{\bullet}|, E) \cong \Tot \hom(\Sigma^{\infty}_{+}X_{\bullet}, E)$, the totalization of the cosimplicial spectrum. The Bousfield-Kan spectral sequence begins with the homotopy of the cosimplicial spectrum $\hom(\Sigma^{\infty}_{+} X_{\bullet}, E)$ and abuts to the homotopy of $\Tot \hom(\Sigma^{\infty}_{+}X_{\bullet}, E)$.

This applies to our situation. We want to resolve 
\begin{align*}
C_{t}^*(EG\times_G \Fix(X)) &\cong \pi_{-*}\hom(\Sigma^{\infty}_{+}EG\times_G \Fix(X),C_t) \\  &\cong \pi_{-*}\hom(\Sigma^{\infty}_{+}EG\times_G \Fix(|X\times F^{\bullet}|),C_t) \\  &\cong \pi_{-*}\hom(|\Sigma^{\infty}_{+}EG\times_G \Fix(X\times F^{\bullet})|,C_t) \\  &\cong \pi_{-*}\Tot\hom(\Sigma^{\infty}_{+}EG\times_G \Fix(X\times F^{\bullet}),C_t).
\end{align*}
It follows from Prop 2.4 and 2.6 in \cite{hkr} that $\E^*(EG\times_G (X\times F^{\times h}))$ is a finitely generated free $\E^*(EG \times_G X)$-module for all $h$. From \cite{hkr} Propositions 2.4 - 2.6 we have that 
\[
\E^*(EG \times_G (X \times F \times F)) \cong \E^*(EG \times_G (X \times F) \times_{(EG \times_G X)} EG\times_G(X \times F))
\]
and this is isomorphic to
\[
\E^*(EG \times_G (X \times F)) \otimes_{\E^*(EG \times_G X)} \E^*(EG \times_G (X \times F)).
\]
We see that the cosimplicial graded $\E^*$-module
\[
\xymatrix{
  { E^{*}_{n}(EG\times_G (X\times F))} \ar@<1ex>[r] \ar@<-1ex>[r]&
  {E^{*}_{n}(EG\times_G (X \times F \times F))} \ar@<2ex>[r] \ar[r] \ar@<-2ex>[r]&
  { \ldots }  
}
\]
is in fact the Amitsur complex of the faithfully flat (even free) map $\E^*(EG \times_G X) \lra{} \E^*(EG \times_G (X \times F))$ induced by the projection. This implies that its homology is concentrated in the zeroeth degree and isomorphic to $\E^*(EG \times_G X)$. In other words the associated chain complex is exact everywhere but at the first arrow. 

The chain complex is the $E_1$ term for the Bousfield-Kan spectral sequence and we have shown that it collapses. Tensoring with $C_t$ retains this exactness as $C_t$ is flat over $\E^0$. Using our assumption regarding spaces with abelian stabilizers we now have a map of $E_1$-terms that is an isomorphism 

\[
\xymatrix{
  { C_t\otimes_{\E^0} E^{*}_{n}(EG\times_G X\times F)} \ar@<1ex>[r] \ar@<-1ex>[r] \ar[d]^{\cong}&
  {C_t\otimes_{\E^0} E^{*}_{n}(EG\times_G X \times F \times F)} \ar[d]^{\cong} \ar@<2ex>[r] \ar[r] \ar@<-2ex>[r]&
  {\ldots} \\
  {C_t^*(EG\times_G \Fix(X\times F))} \ar@<1ex>[r] \ar@<-1ex>[r] &
  {C_t^*(EG\times_G \Fix(X\times F \times F))} \ar@<2ex>[r] \ar[r] \ar@<-2ex>[r] &
  {\ldots}.
 }  
\]
As the homology of these complexes is the $E_2 = E_{\infty}$ page of the spectral sequence and the spectral sequence does converge (\cite{BK}, IX.5) to an associated graded (in this case with one term), this implies that $C_t\otimes_{\E^0} \E^*(EG\times_G X)$ and $C_{t}^*(EG\times_G \Fix(X))$ are isomorphic. Thus we have complex oriented descent.

We are reduced to proving the isomorphism for spaces with abelian stabilizers. Using an equivariant cell decomposition Mayer-Vietoris reduces this to spaces of the form $G/H\times D^n$ where $H$ is abelian and $D^n$ is the $n$-disk. Now homotopy invariance reduces this to spaces of the form $G/H$ with $H$ abelian.

\begin{prop}
The induction property holds for $G/H$ where $H \subseteq G$ is abelian. That is the following diagram commutes:
\[
\xymatrix{C_t\otimes_{\E^0} \E^*(EG\times_G G/H) \ar[r]^{C_t\otimes \Phi_G} \ar[d]^{\cong} & C_t^*(EG\times_G \Fix^G(G/H)) \ar[d]^{\cong} \\
			C_t\otimes_{\E^0} \E^*(EH\times_H *) \ar[r]^{C_t\otimes \Phi_H} & C_t^*(EH\times_H \Fix^H(*))}
\]

\end{prop}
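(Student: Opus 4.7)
The plan is to reduce the commutativity of this square to the naturality of the two constituent parts of the character map with respect to the vertical equivalences. The left vertical isomorphism is induced by the classical equivalence $EG\times_G G/H \simeq EH\times_H \ast = BH$, while the right is Proposition \ref{induction}, and the key observation is that both come from a single underlying equivalence of (transport) groupoids.

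First I would factor each horizontal map into its three stages as constructed in the previous two subsections: the topological map induced by $v\co B\Lk\times \Fix(X)\to EG\times_G X$ extended as in Proposition \ref{top-map}, the K\"unneth isomorphism $\E^*(B\Lk\times Y)\cong \E^*(B\Lk)\otimes_{\E^*}\E^*(Y)$, and the algebraic layer coming from $i_k\co \E^*(B\Lk)\to C_t$ together with base change along $\E^*\to \LE^*\to C_t^*$. This splits the target square into a ``topological'' square followed by an ``algebraic'' square.

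The topological square is exactly Proposition \ref{induction-diagram} after applying $\E^*$: that proposition was proved in precisely the form needed here, identifying the equivalences $EG\times_G G/H\simeq EH\times_H\ast$ and $EG\times_G \Fix^G(G/H)\simeq EH\times_H \Fix^H(\ast)$ and checking commutativity morphism-wise. For the algebraic square, compatibility is essentially formal: the map $i_k\co \E^*(B\Lk)\to C_t$ is a fixed map of $\E^*$-algebras, independent of $G$; the K\"unneth isomorphism is natural in the second variable; and base change to $C_t^*$ commutes with any map of spaces. Thus, applied to the homotopy equivalence $EH\times_H \Fix^H(\ast)\simeq EG\times_G \Fix^G(G/H)$ of Proposition \ref{induction}, the algebraic square commutes by naturality. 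Stacking the two commutative squares gives the result.

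The only point requiring attention---and the main obstacle in writing this out carefully---is verifying that the vertical equivalence in the target diagram on the right is indeed the map induced on $C_t^*$ by the same groupoid equivalence used in Proposition \ref{induction-diagram}, so that the algebraic naturality applies. This is immediate from the explicit formula in the proof of Proposition \ref{induction}, where an object $\ast\in \ast^{\im\al}$ with $\im \al\subseteq H$ is sent to $eH\in (G/H)^{\im\al}$ and the automorphism group $H$ is identified via its action, matching the space-level data appearing in Proposition \ref{induction-diagram}.
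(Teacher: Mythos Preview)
Your argument is correct and is essentially the paper's own proof, which simply cites Proposition~\ref{induction-diagram} together with Proposition~\ref{independence}. The one ingredient you leave implicit is the latter: $\Phi_G$ and $\Phi_H$ are each built using some $\Lambda_k$ large enough for the group in question, so to compare them one either invokes independence of $k$ or, as you tacitly do when calling $i_k$ ``independent of $G$'', fixes a single $k$ that works for $G$ (hence also for $H\subseteq G$) throughout.
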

\begin{proof}
This follows from Prop \ref{induction-diagram} and Prop \ref{independence}.
\end{proof}

We are left having to show it is an isomorphism for finite abelian groups, but we can use the K\"unneth isomorphism to reduce to cyclic $p$-groups and the isomorphism there has already been proved in Prop \ref{abmap}.

We conclude by working an example that highlights the difference between the transchromatic character maps in the case when $t>0$ and the traditional case when $t=0$.

\begin{example}
We calculate the codomain of the character map when $X = *$ and for arbitrary finite groups $G$. When $X = *$, $\Fix(*)$ is the $G$-set 
\[
\{(g_1,\ldots,g_{n-t}) | [g_i,g_j] = e, g_{i}^{p^n} = e \text{ for } n >> 0\}
\]
with action by coordinate-wise conjugation. Thus 
\[
EG\times_G\Fix(*) \simeq \Coprod{[g_1,\ldots,g_{n-t}]} BC(g_1,\ldots,g_{n-t}),
\]
the disjoint union over ``generalized conjugacy classes'' of the classifying space of the centralizer of the $(n-t)$-tuple. When $t=0$, $p$ is invertible in $C_0$ and $C_0^*(BG) \cong C_0^*$ so the codomain is class functions with values in $C_0^*$ on the set of $(n-t)$-tuples. When $t>0$ this is not the case: 
\[
C_t^*(EG\times_G\Fix(*)) \cong \Prod{[g_1,\ldots,g_{n-t}]} C_t^*(BC(g_1,\ldots,g_{n-t})).
\] 
\end{example}

%
%
%
\bibliographystyle{gtart}
\bibliography{mybib}

\begin{thebibliography}{}
\providecommand\bibmarginpar{\leavevmode\marginpar}
\def\urlstyle#1{{\tt #1}}

\bibitem{Isogenies}
\textbf{M Ando}, \href{http://dx.doi.org/10.1215/S0012-7094-95-07911-3}
  {\emph{Isogenies of formal group laws and power operations in the cohomology
  theories {$E_n$}}}, Duke Math. J. 79 (1995) 423--485

\bibitem{atiyahcharacters}
\textbf{M\,F Atiyah}, \href{http://dx.doi.org/10.1007/BF02698718}
  {\emph{{Characters and cohomology of finite groups.}}}, Publ. Math., Inst.
  Hautes Etud. Sci. 9 (1961) 247--288

\bibitem{BK}
\textbf{A\,K Bousfield}, \textbf{D\,M Kan}, \emph{Homotopy limits, completions
  and localizations}, Lecture Notes in Mathematics, Vol. 304, Springer-Verlag,
  Berlin (1972)

\bibitem{Dem}
\textbf{M Demazure}, \emph{Lectures on {p}-divisible groups}, volume 302 of
  \emph{Lecture Notes in Mathematics}, Springer-Verlag, Berlin (1986)Reprint of
  the 1972 original

\bibitem{groupes}
\textbf{M Demazure}, \textbf{P Gabriel}, \emph{Groupes alg\'ebriques. {T}ome
  {I}: {G}\'eom\'etrie alg\'ebrique, g\'en\'eralit\'es, groupes commutatifs},
  Masson \& Cie, \'Editeur, Paris (1970)Avec un appendice {{\i}t Corps de
  classes local} par Michiel Hazewinkel

\bibitem{dugger_colimits}
\textbf{D Dugger}, \href{http://pages.uoregon.edu/ddugger/hocolim.pdf} {\emph{A
  primer on homotopy colimits}}
\ Available at \setbox0\hbox{\makeatletter\@url
{http://pages.uoregon.edu/ddugger/hocolim.pdf}}
\href{http://pages.uoregon.edu/ddugger/hocolim.pdf}
{\unhbox0}

\bibitem{Eisenbud}
\textbf{D Eisenbud}, \emph{Commutative algebra}, volume 150 of \emph{Graduate
  Texts in Mathematics}, Springer-Verlag, New York (1995)With a view toward
  algebraic geometry

\bibitem{GabrielandZisman}
\textbf{P Gabriel}, \textbf{M Zisman}, \emph{Calculus of fractions and homotopy
  theory}, Ergebnisse der Mathematik und ihrer Grenzgebiete, Band 35,
  Springer-Verlag New York, Inc., New York (1967)

\bibitem{hkr}
\textbf{M\,J Hopkins}, \textbf{N\,J Kuhn}, \textbf{D\,C Ravenel},
  \emph{{Generalized group characters and complex oriented cohomology
  theories.}}, J. Am. Math. Soc. 13 (2000) 553--594

\bibitem{Hovey-vn}
\textbf{M\,A Hovey}, \href{http://dx.doi.org/10.1215/S0012-7094-97-08813-X}
  {\emph{{$v_n$}-elements in ring spectra and applications to bordism theory}},
  Duke Math. J. 88 (1997) 327--356

\bibitem{KM}
\textbf{N\,M Katz}, \textbf{B Mazur}, \emph{Arithmetic moduli of elliptic
  curves}, volume 108 of \emph{Annals of Mathematics Studies}, Princeton
  University Press, Princeton, NJ (1985)

\bibitem{Kuhn}
\textbf{N\,J Kuhn}, \emph{Character rings in algebraic topology}, from:
  ``Advances in homotopy theory ({C}ortona, 1988)'', London Math. Soc. Lecture
  Note Ser. 139, Cambridge Univ. Press, Cambridge (1989)  111--126

\bibitem{matsumura_algebra}
\textbf{H Matsumura}, \emph{Commutative algebra}, volume~56 of
  \emph{Mathematics Lecture Note Series}, second edition, Benjamin/Cummings
  Publishing Co., Inc., Reading, Mass. (1980)

\bibitem{may_loop_spaces}
\textbf{J\,P May}, \emph{The geometry of iterated loop spaces},
  Springer-Verlag, Berlin (1972)Lectures Notes in Mathematics, Vol. 271

\bibitem{Milne}
\textbf{J\,S Milne}, \emph{\'{E}tale cohomology}, volume~33 of \emph{Princeton
  Mathematical Series}, Princeton University Press, Princeton, N.J. (1980)

\bibitem{oort}
\textbf{F Oort}, \emph{Commutative group schemes}, volume~15 of \emph{Lecture
  Notes in Mathematics}, Springer-Verlag, Berlin (1966)

\bibitem{Nilpotence}
\textbf{D\,C Ravenel}, \emph{Nilpotence and periodicity in stable homotopy
  theory}, volume 128 of \emph{Annals of Mathematics Studies}, Princeton
  University Press, Princeton, NJ (1992)Appendix C by Jeff Smith

\bibitem{RW}
\textbf{D\,C Ravenel}, \textbf{W\,S Wilson},
  \href{http://dx.doi.org/10.2307/2374093} {\emph{The {M}orava {$K$}-theories
  of {E}ilenberg-{M}ac {L}ane spaces and the {C}onner-{F}loyd conjecture}},
  Amer. J. Math. 102 (1980) 691--748

\bibitem{Notes}
\textbf{C Rezk}, \emph{Notes on the {H}opkins-{M}iller theorem}, from:
  ``Homotopy theory via algebraic geometry and group representations
  ({E}vanston, {IL}, 1997)'', Contemp. Math. 220, Amer. Math. Soc., Providence,
  RI (1998)  313--366

\bibitem{logarithmic}
\textbf{C Rezk}, \href{http://dx.doi.org/10.1090/S0894-0347-06-00521-2}
  {\emph{The units of a ring spectrum and a logarithmic cohomology operation}},
  J. Amer. Math. Soc. 19 (2006) 969--1014

\bibitem{subgroups}
\textbf{N\,P Strickland},
  \href{http://dx.doi.org/10.1016/S0022-4049(96)00113-2} {\emph{Finite
  subgroups of formal groups}}, J. Pure Appl. Algebra 121 (1997) 161--208

\end{thebibliography}


\end{document}